\providecommand{\U}[1]{\protect \rule{.1in}{.1in}}
\newtheorem{theorem}{Theorem}[section]
\newtheorem{corollary}[theorem]{Corollary}
\newtheorem{lemma}[theorem]{Lemma}
\newtheorem{proposition}[theorem]{Proposition}
\newtheorem{remark}[theorem]{Remark}
\numberwithin{equation}{section}
\newcommand{\R}{{\mathbb{R}}}
\newcommand{\N}{{\mathbb{N}}}
\newcommand{\Z}{{\mathbb{Z}}}
\newcommand{\C}{{\mathbb{C}}}
\renewcommand{\a}{{\mathbf{a}}}
\renewcommand{\l}{{\lambda}}
\begin{document}
\title[Spectrum of the Lam\'{e} operator]
{Spectrum of the Lam\'{e} operator along $\mathrm{Re}\tau={1}/{2}:$ The genus $3$ case}
\author{Erjuan Fu}
\address{Yanqi Lake Beijing Institute of Mathematical Sciences and Applications,
 Beijing, 101408, China}
\email{fej.2010@tsinghua.org.cn; ejfu@bimsa.cn}

\maketitle

\begin{abstract}
In this paper, we study the spectrum $\sigma(L)$ of the Lam\'{e} operator
\[L=\frac{d^2}{dx^2}-12\wp(x+z_0;\tau)\quad \text{in}\;\;L^2(\mathbb{R}, \mathbb{C}),\]
where $\wp(z;\tau)$ is the Weierstrass elliptic function with periods $1$ and $\tau$, and $z_0\in\mathbb{C}$ is chosen such that $L$ has no singularities on $\mathbb{R}$.
 We prove that a point $\lambda\in \sigma(L)$ is an intersection point of different spectral arcs but not a zero of the spectral polynomial if and only if $\lambda$ is a zero of the following cubic polynomial: 
\begin{equation*}
    \frac{4}{15} \l^3+\frac{8}{5}\eta_1 \l^2-3g_2 \l+9g_3-6\eta_1 g_2=0.
\end{equation*}
We also study the deformation of the spectrum as $\tau=\frac12+ib$ with $b>0$ varying. We discover $10$ different types of graphs for the spectrum as $b$ varies around the double zeros of the spectral polynomial.
\end{abstract}

\tableofcontents

\section{Introduction}
Let $T_{\tau}:=\mathbb{C}/\Lambda_\tau$ be a flat torus with $\Lambda_\tau=\mathbb{Z}+\mathbb{Z}\tau$ and $\tau \in \mathbb{H}=\{  \tau\in\C|\operatorname{Im}\tau>0\}$.
Recall that 
\begin{equation*}
\label{40-3}\wp(z;\tau):=\frac{1}{z^{2}}+\sum_{(m,n)\in \Z^2\setminus\{(0,0)\} }\left(  \frac{1}{(z-m-n\tau)^{2}}-\frac
{1}{(m+n\tau)^{2}}\right)
\end{equation*}
is the
Weierstrass elliptic function with basic periods $\omega_{1}=1$ and $\omega_{2}
=\tau$. Denote by $\omega_3=1+\tau$. It is well known that
\begin{equation}\label{eq-wp}
\wp'(z;\tau)^2
=4\prod_{k=1}^{3}(\wp(z;\tau)-e_{k}(\tau))
=4\wp(z;\tau)^3-g_{2}(\tau)\wp(z;\tau)-g_{3}(\tau),\end{equation}
where $e_k(\tau)=\wp(\frac{\omega_k}{2}; \tau)$, $k=1,2,3$, and $g_2(\tau), g_3(\tau)$ are well-known invariants of the elliptic curve.

The Weierstrass
zeta function is defined by  $\zeta(z)=\zeta(z;\tau):=-\int^{z}\wp(\xi;\tau)d\xi$   with two quasi-periods $\eta_{j}=\eta_{j}(\tau)$, $j=1,2$:%
\begin{equation*}
\eta_{j}(\tau)=2\zeta(\tfrac{\omega_{j}}{2};\tau)=\zeta(z+\omega_{j}%
;\tau)-\zeta(z;\tau),\quad j=1,2,\label{40-2}%
\end{equation*}
and the Weierstrass sigma function is defined by $\sigma(z)=\sigma(z;\tau):=\exp \int^{z}\zeta(\xi)d\xi$. Notice that $\zeta(z)$ is an odd meromorphic function with simple poles at $\mathbb{Z}+\mathbb{Z}\tau$ and $\sigma(z)$ is an odd
entire function with simple zeros at $\mathbb{Z}+\mathbb{Z}\tau$. 

In this paper, we study
the spectrum $\sigma(L^{g}_\tau)$ of the classical Lam\'{e} operator \cite{Lame}
\begin{equation*}
L_\tau^g=\frac{d^2}{dx^2}-g(g+1)\wp(x+z_0;\tau),\quad  x\in\mathbb{R}\end{equation*}
in $L^2(\R, \C)$,
where  $g\in \N$ and $z_0\in\mathbb{C}$ is chosen such that $\wp(x+z_0;\tau)$ has no singularities on $\mathbb{R}$.
 Remark that $\sigma(L_\tau^g)$ does not depend on the choice of $z_0$ due to the fact that the Lam\'{e} potential $-g(g+1)\wp(z;\tau)$ is a Picard potential in the sense of Gesztesy and Weikard \cite{GW} (i.e. all solutions of the Lam\'{e} equation
\begin{equation*}
y''(z)= \left(g(g+1)\wp(z;\tau)+\l\right)y(z),\quad z\in \mathbb{C}
 \end{equation*}
are meromorphic in $\mathbb{C}$).

The spectral theory of the Schr\"{o}dinger operator $L$ with complex periodic smooth potentials  has attracted significant attention and has been studied widely in the literature; see e.g. \cite{BG-JAM,B-CPAM,GW,GW2,HHV,Rofe-Beketov} and references therein.
In this theory, it is known \cite{Rofe-Beketov} that
\[\sigma(L)=\Delta^{-1}([-2,2])=\{\l\in\mathbb{C}\,|\, -2\leq\Delta(\l)\leq 2\},\]
where $\Delta(\l)$ denotes the Hill's discriminant  which is the trace of the monodromy matrix of $Ly=\l y$ with respect to $x\to x+1$.
Furthermore, it was proved in \cite{GW} that $\sigma(L)$ consists of finitely many analytic arcs if the potential of $L$ is a Picard potential. In this paper, as in \cite{CFL-ADV, GW} we call the arcs of $\sigma(L)$ as \emph{spectral arcs}.

For the simplest case $\mathrm{Re}\tau=0$, since the Lam\'e potential $-g(g+1)\wp(x+\frac{\omega_2}{2}; \tau)$ is real-valued and smooth on $\R$,
Ince \cite{Ince} proved that there are $2g+1$ distinct real numbers $\l_0>\l_1> \cdots> \l_{2g}$ such that the spectrum
\begin{equation}\label{spectrum-Rline}
    \sigma(L_\tau^g)=(-\infty, \l_{2g}]\cup[\l_{2g-1}, \l_{2g-2}]\cup\cdots\cup[\l_1, \l_0] \subseteq \R.
\end{equation}

However, the spectrum $\sigma(L_\tau^g)$ is no longer of the form (\ref{spectrum-Rline}) for general $\tau$'s and becomes very complicated; see e.g. \cite{BG-JAM,  CFL-AJM, CL-PAMS, GW-Lame, GW2, HHV} and references therein. Gesztesy-Weikard \cite{GW2} and Haese-Hill et al. \cite{HHV} concentrated on the $g=1$ case, for which the spectrum $\sigma(L_\tau^1)$ consists of two regular analytic arcs and so there are totally three different types of graphs for different $\tau$'s as shown in the following figure (see also \cite{ BG-JAM, GW-Lame}).

\medskip

\begin{center}
\begin{tabular}{ccccc}
\hline\\
\begin{tikzpicture}
\draw[thick,smooth](0,0.25)..controls (0.5,0.5) and (0.75,0.5)..(1,0);
\draw[thick,smooth](0.75,0.75)..controls (0.75,0.4) and (1,0.4)..(1.25,0.25);
\fill(1,0) circle(1.5pt);
\fill(0.75,0.75) circle(1.5pt);
\fill(1.25,0.25) circle(1.5pt);
\end{tikzpicture}   &\text{\qquad}&
\begin{tikzpicture}
\draw[thick,smooth](0.25,0.75)..controls (0.45,0.4) and (0.6,0.4)..(0.8,0.5);
\draw[thick,smooth](1,0.75)..controls (0.75,0.6) and (0.75,0.4)..(1,0);
\fill(0.8,0.5) circle(1.5pt);
\fill(1,0.75) circle(1.5pt);
\fill(1,0) circle(1.5pt);
\end{tikzpicture}
&\text{\qquad} &
\begin{tikzpicture}
\draw[thick,smooth](0.25,0.6)..controls (0.5,0.4) and (0.75,0.4)..(1,1);
\draw[thick,smooth](0.5,0.75)..controls (0.75,0.75) and (1,0.75)..(1.25,0.25);
\fill(0.5,0.75) circle(1.5pt);
\fill(1,1) circle(1.5pt);
\fill(1.25,0.25) circle(1.5pt);
\fill(0.85,0.7) circle(1.5pt);
\end{tikzpicture} \\
  \scriptsize{(a)}  && \scriptsize{(b)}  && \scriptsize{(c)} \\
  \hline \\
  && Figure 1 &&
\end{tabular}
\end{center}
\medskip


In particular, different spectral arcs might intersect as shown in Figure (b)-(c).
It was pointed out in \cite[Section 5]{HHV} that the rigorous analysis of $g\geq 2$ cases seems to be difficult since the related explicit formulae quickly become quite complicated as $g$ grows.
Recently,
\cite[Lemma 4.1]{CFL-ADV} inferred that
 the spectrum $\sigma(L_\tau^g)$ is symmetric with respect to the real line $\R$  if $\tau=\frac{1}{2}+bi$ with $b>0$. Furthermore, it was proved in \cite{CFL-ADV} that   the spectrum  $\sigma(L_{\frac{1}{2}+bi}^2)$ in $g=2$ case has exactly 9 different types of graphs for different $b$'s, and
the continuous deformation of the spectrum
as $b$ increases is as follows.

\medskip
\begin{center}
    \begin{tabular}{ccccc}
    \hline\\
 {\begin{tikzpicture}
\draw[thick] (-0.25,0)--(0.6,0);
\draw[thick]  (1.1,0)--(1.6,0);
   \draw [thick,domain=-30:30] plot ({cos(\x)-0.1}, {sin(\x)});
   \fill(0.6,0) circle (1.5pt) ;
         \fill ({cos(30)-0.1},{sin(30)}) circle (1.5pt);
    \fill ({cos(30)-0.1},{-sin(30)}) circle (1.5pt) ;
    \fill (1.1,0) circle (1.5pt);
      \fill (1.6,0) circle (1.5pt);
      \draw(-0.25, 0.1) node[below]{\tiny{$-\infty$}};
\end{tikzpicture}}&&
  {\begin{tikzpicture}
\draw[thick] (-0.25,0)--(0.75,0);
\draw[thick]  (1.1,0)--(1.6,0);
   \draw [thick,domain=-30:30] plot ({cos(\x)-0.25}, {sin(\x)});
   \fill(0.75,0) circle (1.5pt) ;
         \fill ({cos(30)-0.25},{sin(30)}) circle (1.5pt);
    \fill ({cos(30)-0.25},{-sin(30)}) circle (1.5pt) ;
    \fill (1.1,0) circle (1.5pt);
      \fill (1.6,0) circle (1.5pt);
      \draw(-0.25, 0.1) node[below]{\tiny{$-\infty$}};
\end{tikzpicture}}
&&
  {\begin{tikzpicture}
\draw[thick] (-0.25,0)--(0.75,0);
\draw[thick]  (1.1,0)--(1.6,0);
   \draw [thick,domain=-30:30] plot ({cos(\x)-0.5}, {sin(\x)});
   \fill(0.75,0) circle (1.5pt) ;
         \fill ({cos(30)-0.5},{sin(30)}) circle (1.5pt);
    \fill ({cos(30)-0.5},{-sin(30)}) circle (1.5pt) ;
    \fill (1.1,0) circle (1.5pt);
      \fill (1.6,0) circle (1.5pt);
      \fill (0.5,0) circle (1.5pt);
      \draw(-0.25, 0.1) node[below]{\tiny{$-\infty$}};
\end{tikzpicture}}
       \\ \scriptsize{(1)} &&\scriptsize{(2)}&&\scriptsize{(3)} \\ \hline\\
       {\begin{tikzpicture}
\draw[thick] (-0.25,0)--(1.6,0);
 \draw [thick,domain=-30:30] plot ({cos(\x))-0.5}, {sin(\x)});
         \fill ({cos(30))-0.5},{sin(30)}) circle (1.5pt);
    \fill ({cos(30)-0.5},{-sin(30)}) circle (1.5pt) ;
        \fill (0.5,0) circle (1.5pt);
      \fill (1.6,0) circle (1.5pt);
      \fill (1,0) circle (1.5pt)
      node[below]{\tiny{0}};
      \draw(-0.25, 0.1) node[below]{\tiny{$-\infty$}};
\end{tikzpicture}}
&&
  \begin{tikzpicture}
\draw[thick] (-1.5,0)--(0,0);
   \draw [thick,domain=150:210] plot ({cos(\x)+0.5}, {sin(\x)});
      \draw [thick,domain=-30:30] plot ({cos(\x)-1.75}, {sin(\x)});
 \fill ({cos(30)-1.75},{sin(30)}) circle (1.5pt);
   \fill ({cos(30)-1.75},{-sin(30)}) circle (1.5pt);
         \fill ({cos(150)+0.5},{sin(150)}) circle (1.5pt) ;
            \fill ({cos(150)+0.5},{-sin(150)}) circle (1.5pt) ;
   \fill (0,0) circle (1.5pt);
      \fill(-0.5,0) circle (1.5pt);
         \fill (-0.75,0) circle (1.5pt);
       \draw(-1.5, 0.1) node[below]{\tiny{$-\infty$}};
\end{tikzpicture}
&&
\begin{tikzpicture}
\draw[thick] (-1.5,0)--(0,0);
    \draw [thick,domain=-170:-10] plot ({0.25*cos(\x)-0.5}, {0.5*sin(\x)+0.5});
    \draw [thick,domain=10:170] plot ({0.25*cos(\x)-0.5}, {0.5*sin(\x)-0.5});
   \fill ({0.25*cos(10)-0.5},{-0.5*sin(10)+0.5}) circle (1.5pt);
   \fill ({0.25*cos(170)-0.5},{-0.5*sin(170)+0.5}) circle (1.5pt);
   \fill ({0.25*cos(10)-0.5},{0.5*sin(10)-0.5}) circle (1.5pt) ;
      \fill ({0.25*cos(170)-0.5},{0.5*sin(170)-0.5}) circle (1.5pt) ;
   \fill (0,0) circle(1.5pt) ;
      \fill (-0.5,0) circle (1.5pt);
     \draw(-1.5, 0.1) node[below]{\tiny{$-\infty$}};
\end{tikzpicture}
    \\ \scriptsize{(4)} &&\scriptsize{(5)}&&\scriptsize{(6)} \\ \hline\\
{\begin{tikzpicture}
\draw[thick] (-0.25,0)--(1,0);
   \draw [thick,domain=60:120] plot ({cos(\x)+0.75}, {sin(\x)-1.2});
   \draw [thick,domain=-120:-60] plot ({cos(\x)+0.75}, {sin(\x)+1.2});
   \fill(1,0) circle (1.5pt) ;
         \fill ({cos(60)+0.75},{sin(60)-1.2}) circle (1.5pt);
    \fill ({cos(120)+0.75},{sin(120)-1.2}) circle (1.5pt) ;
         \fill ({cos(60)+0.75},{-sin(60)+1.2}) circle (1.5pt);
    \fill ({cos(120)+0.75},{-sin(120)+1.2}) circle (1.5pt) ;
     \draw(-0.25, 0.1) node[below]{\tiny{$-\infty$}};
\end{tikzpicture}}
&&
{\begin{tikzpicture}
\draw[thick] (-0.25,0)--(0.75,0);
   \draw [thick,domain=150:210] plot ({cos(\x)+2.35}, {sin(\x)});
   \fill(0.75,0) circle (1.5pt) ;
         \fill ({cos(150)+2.35},{sin(150)}) circle (1.5pt);
    \fill ({cos(210)+2.35},{-sin(150)}) circle (1.5pt) ;
    \fill (1.35,0) circle (1.5pt)
    node[right]{\tiny{0}};
     \draw(-0.25, 0.1) node[below]{\tiny{$-\infty$}};
\end{tikzpicture}}
&&
{\begin{tikzpicture}
\draw[thick] (-0.25,0)--(0.75,0);
\draw[thick]  (1.1,0)--(1.6,0);
   \draw [thick,domain=150:210] plot ({cos(\x)+2.35}, {sin(\x)});
   \fill(0.75,0) circle (1.5pt) ;
         \fill ({cos(150)+2.35},{sin(150)}) circle (1.5pt);
    \fill ({cos(210)+2.35},{-sin(150)}) circle (1.5pt) ;
    \fill (1.1,0) circle (1.5pt);
      \fill (1.6,0) circle (1.5pt);
      \fill (1.35,0) circle (1.5pt);
      \draw(-0.25, 0.1) node[below]{\tiny{$-\infty$}};
\end{tikzpicture}}
       \\ \scriptsize{(7)} &&\scriptsize{(8)}&&\scriptsize{(9)}\\ \hline \\
       && Figure 2 &&
    \end{tabular}
   \end{center}
\medskip



In this paper, we will focus on the genus  
$g=3$ case. To the best of our knowledge, there seems no more explicit description of $\sigma(L_\tau^3)$ in the literature.
It is well known (see \cite[p.569]{LW2}) that the associated spectral polynomial $Q_3(\l;\tau)$ is given by
\begin{equation}\label{spectralpolynomial}
Q_3(\l;\tau)
=\l\prod_{k=1}^3\left(\l^2-6e_k\l
+15\left(3e_k^2-g_2\right)\right),
\end{equation}
 so the associated hyperelliptic curve $\{(\lambda, C)\,|\,C^2=Q_3(\l;\tau)\}$ is of genus $3$.

By applying \cite[Theorem 4.1]{GW}, we see that \emph{the spectrum $\sigma(L_{\tau}^3)$ consists of
$\tilde{g}\leq 3$ bounded simple analytic arcs $\sigma_{k}$ and {\it one} semi-infinite simple analytic arc $\sigma_{\infty}$ which tends
to $-\infty+\langle q\rangle$, with $\langle q\rangle=\int
_{x_{0}}^{x_{0}+1}q(x)dx$, i.e.
\begin{equation*}
\sigma(L)=\sigma_{\infty}\cup \cup_{k=1}^{\tilde{g}}\sigma_{k},\quad \tilde{g}\leq 3,
\end{equation*}
where the finite endpoints of such arcs must be zeros of the spectral polynomial $Q_3(\l;\tau)$ with odd order.}

As shown in the above figures, there are two kinds of intersection points of spectral arcs: One is that this intersection point is also an endpoint of some arc (see Figures (b) and (2)), so it is met by $2k+1$ semi-arcs for some $k\geq 1$; the other is that this intersection point is not an endpoint (see Figures (c) and (3)-(6), (9)), so it is met by $2k$ semi-arcs for some $k\geq 2$, i.e. it is an inner point of $k$ arcs; and we call it  \emph{an inner intersection point}.

First, we have to determine completely the inner intersection points of different arcs in order to study the geometry of $\sigma(L_\tau^3)$. Our first result is as follows.

\begin{theorem}\label{thm-inner}
Let $\tau\in \mathbb{H}$ and $\l_0\in \sigma(L_\tau^3)$ with $Q_3(\l_0; \tau)\neq 0$. Then $\l_0$ is an inner intersection point if and only if  
$\l_0$ satisfies the following cubic equation
\begin{equation*}
f(\l):=\frac{4}{15} \l^3+\frac{8}{5}\eta_1 \l^2-3g_2 \l+9g_3-6\eta_1 g_2=0.
\end{equation*}
\end{theorem}

Now we study the deformation of the spectrum $\sigma(L_\tau^3)$ as $\tau=\frac{1}{2}+bi$ deforms. This problem is challenging and we can only obtain some partial results.
For this purpose, we need to analysis when some endpoint $\lambda_0$ (i.e. $\l_0$ is a zero of $Q_3(\l;\tau)$ with odd order) could be an intersection point.

\begin{theorem}\label{thm-spec-poly-zeros}
Let $\tau=\frac{1}{2}+bi$ with $b>0$ and $d(\l)$ denote the number of semi-arcs met at $\lambda$. Then the zeros of the spectral polynomial $Q_3(\l; \tau)$ are listed as follows
\begin{equation*}
    0, \mu, \overline{\mu}, \nu, \overline{\nu}, \vartheta_+, \vartheta_-,
\end{equation*}
where the multiplicities of $0, \mu, \overline{\mu},  \nu, \overline{\nu}$ are $1$. Furthermore, there is some $\beta\approx 1.0979\in  \left(\frac{\sqrt{3}}{2}, +\infty\right)$ such that
\begin{enumerate}
    \item[(z1)] $\vartheta_-=\vartheta_+=3e_1\in \R$ if and only if $b\in \{\beta, \widehat{\beta}\}$, where $\widehat{\beta}= \frac{1}{4\beta}\approx 0.2277$.
    \item[(z2)] $\vartheta_-, \vartheta_+\in \R$ and $\vartheta_-<3e_1< \vartheta_+<0$ if and only if  $b\in (0,\widehat{\beta})$.
       \item[(z3)] $\vartheta_-, \vartheta_+\in \R$ and $0<\vartheta_-<3e_1< \vartheta_+$ if and only if  $b\in (\beta, +\infty)$.
    \item[(z4)] $\vartheta_-=\overline{\vartheta_+}\not\in \R$ if and only if $b\in (\widehat{\beta}, \beta)$.
\end{enumerate}
Moreover,\begin{enumerate}
   \item[(d1)]$d(0)=1+2d$ for some $d\geq 1$ (i.e. the endpoint $0$ is an intersection point) if and only if $b=b_{g}\approx 0.47$;
otherwise, $d(0)=1$.
   \item[(d2)] $d(\mu)=d(\nu)=d(\overline{\mu})=d(\overline{\nu})=1$, i.e. the endpoints $\mu,\nu,\overline{\mu},\overline{\nu}$ can not be intersection points.
       \item[(d3)] \begin{equation*}d(\vartheta_-)=d(\vartheta_+)=
       \begin{cases}
       2 \qquad \text{if} \quad b\in\{\widehat{\beta}, \beta\}\\
       1 \qquad \text{if} \quad b\in (\widehat{\beta}, \beta).
       \end{cases}
       \end{equation*}
\end{enumerate}
\end{theorem}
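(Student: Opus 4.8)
\emph{Proof strategy.} Along $\tau=\tfrac12+ib$ the lattice $\Z+\Z\tau$ is invariant under complex conjugation, so $g_2,g_3\in\R$, $e_1=\wp(\tfrac12;\tau)\in\R$ and $e_3=\overline{e_2}$; writing $e_2=-\tfrac{e_1}{2}+ic$ with $c=\operatorname{Im}e_2\in\R\setminus\{0\}$ one gets $g_2=3e_1^2-4c^2$, $g_3=e_1^3+4e_1c^2$, and from \eqref{spectralpolynomial}
\[
Q_3(\l)=\l\,p_1(\l)\,p_2(\l)\,p_3(\l),\qquad p_1(\l)=\l^2-6e_1\l+60c^2,\quad p_3(\l)=\overline{p_2(\bar\l)},
\]
so the zeros of $Q_3$ are $0$, the roots $\vartheta_\pm$ of $p_1$, the roots $\mu,\nu$ of $p_2$, and $\overline\mu,\overline\nu$. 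Since $\operatorname{disc}p_2=(144e_1^2-96c^2)+144ie_1c$ cannot vanish (as $c\ne0$), $\mu\ne\nu$; a real root of $p_2$ would be a common root of $p_2$ and $p_3=\overline{p_2(\bar\cdot)}$, forcing $\l=-\tfrac{15}{2}e_1$ and then $p_2(-\tfrac{15}{2}e_1)=15c^2\ne0$, a contradiction, so $\mu,\nu\notin\R$; the same identities show $\l,p_1,p_2,p_3$ are pairwise coprime for all $b>0$, so the seven listed zeros are distinct except for the coincidence $\vartheta_+=\vartheta_-$.

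For (z1)--(z4) note $\vartheta_\pm=3e_1\pm\sqrt{9e_1^2-60c^2}$ and $\vartheta_+\vartheta_-=60c^2>0$, so everything is governed by the sign of $h(b):=9e_1^2-60c^2=15g_2-36e_1^2$ (which is $\tfrac14\operatorname{disc}p_1$) and that of $e_1$. I compute $h$ via theta constants: along $\operatorname{Re}\tau=\tfrac12$ one has $\theta_4(0|\tau)=\overline{\theta_3(0|\tau)}$ and $\theta_2(0|\tau)^4$ purely imaginary, so with $A:=\theta_3(0|\tau)^4$ and the standard formulas $e_1=\tfrac{2\pi^2}{3}\operatorname{Re}A$, $c^2=\pi^4(\operatorname{Im}A)^2$ we get $h(b)=4\pi^4\!\left((\operatorname{Re}A)^2-15(\operatorname{Im}A)^2\right)$. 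Splitting $\theta_3(0|\tfrac12+ib)$ over even/odd indices gives $\theta_3(0|\tfrac12+ib)=\theta_3(0|4ib)+i\,\theta_2(0|4ib)$, hence $A=(X+iY)^4$ with $X=\theta_3(0|4ib)>0$, $Y=\theta_2(0|4ib)>0$. Putting $r:=Y^2/X^2$ (the modulus $k(4ib)$, which makes $b\mapsto r$ a strictly decreasing bijection $(0,\infty)\to(0,1)$), the condition $h(b)=0$ becomes the palindromic quartic $r^4-252r^3+518r^2-252r+1=0$, i.e.\ $s^2-252s+516=0$ for $s=r+r^{-1}$, with roots $s=126\pm32\sqrt{15}$, both in $(2,\infty)$; as $s$ is a decreasing bijection of $(0,1)$ onto $(2,\infty)$ this gives exactly two solutions $r_1<r_2$ in $(0,1)$, hence exactly two $b$'s $\widehat\beta<\beta$ with $h=0$. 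That $\widehat\beta=\tfrac1{4\beta}$ comes from the modular matrix $\gamma=\left(\begin{smallmatrix}1&-1\\2&-1\end{smallmatrix}\right)$: it maps $\tfrac12+ib\mapsto\tfrac12+\tfrac{i}{4b}$, and because $\wp(\tfrac{2\tau-1}{2};\tau)=\wp(-\tfrac12;\tau)=e_1$ one has $e_1(\gamma\tau)=(2\tau-1)^2e_1(\tau)$, $g_2(\gamma\tau)=(2\tau-1)^4g_2(\tau)$, so the zero set of $g_2-\tfrac{12}{5}e_1^2$ (equivalently of $h$) is $\gamma$-invariant, and $b=\tfrac12$ being the only fixed point of $b\mapsto\tfrac1{4b}$ forces $\{\widehat\beta,\beta\}=\{\beta,\tfrac1{4\beta}\}$. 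Finally $e_1\ne0$ wherever $h>0$ (there $e_1^2>\tfrac{20}{3}c^2>0$), while $e_1\to-\infty$ as $b\to0^+$ and $e_1\to\tfrac{2\pi^2}{3}>0$ as $b\to\infty$; this fixes the sign patterns in (z1)--(z4).

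The assertions on $d(\cdot)$ rest on the classical finite-gap identity $dp=\kappa\,f(\l)\,d\l/\sqrt{Q_3(\l)}$ for the quasimomentum $p$ (with $f$ the cubic of the theorem, up to a nonzero constant $\kappa$ --- this is the polynomial already underlying Theorem~\ref{thm-inner}), which gives $Q_3(\l)\,\Delta'(\l)^2=\kappa^2(4-\Delta(\l)^2)f(\l)^2$. Matching orders of vanishing at a point $\l_0$ and counting the local branches of $\{\operatorname{Im}\Delta=0,\ \operatorname{Re}\Delta\in[-2,2]\}$ one deduces: (i) every \emph{simple} zero $\l_0$ of $Q_3$ satisfies $\Delta(\l_0)=\pm2$, hence $\l_0\in\sigma(L)$, and $d(\l_0)=1+2\operatorname{ord}_{\l_0}f$; (ii) a \emph{double} zero $\l_0$ of $Q_3$ lying in $\sigma(L)$ has $d(\l_0)=2\operatorname{ord}_{\l_0}f$ (in particular $\operatorname{ord}_{\l_0}f\ge1$). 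So everything reduces to locating common zeros of $f$ and $Q_3$. For (d1): $f(0)=9g_3-6\eta_1g_2$, so $0$ is an intersection point precisely when $G(b):=9g_3-6\eta_1g_2$ vanishes, and then $d(0)=1+2\operatorname{ord}_0f=1+2d$ with $d\ge1$. The $q$-expansions of $g_2,g_3$ and (using the quasimodularity of $\eta_1$ under $\gamma$) of $\eta_1$ give $G$ opposite signs as $b\to0^+$ and as $b\to\infty$, so $G$ vanishes at some $b_g$; its uniqueness ($b_g\approx0.47$) follows from a monotonicity analysis of $G$ along $\operatorname{Re}\tau=\tfrac12$ obtained from the Ramanujan identities for $DE_2,DE_4,DE_6$ ($D=\tfrac1{2\pi i}\tfrac{d}{d\tau}$). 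This monotonicity is the main technical point of (d1).

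For (d3) and (d2), reduce $f$ modulo $p_k$ using $\wp'(\omega_k/2)=0$, i.e.\ $g_3=4e_k^3-g_2e_k$ and $g_2-4e_k^2=-g_3/e_k$; this yields $f\equiv\alpha_k\l+\beta_k\pmod{p_k}$ with $\alpha_1=\tfrac1{15}(h+144\eta_1e_1)$, $\beta_1=e_1h-18\eta_1g_3/e_1$ (and analogous complex $\alpha_2,\beta_2$). For $k=1$ everything is real, and on $(\widehat\beta,\beta)$ the polynomial $\alpha_1\l+\beta_1$ is a nonzero constant or a degree-$1$ polynomial with a real root, so it cannot vanish at $\vartheta_\pm\notin\R$; hence $f(\vartheta_\pm)\ne0$ and $d(\vartheta_\pm)=1$. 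At $b\in\{\widehat\beta,\beta\}$, $h=0$ gives $c^2=\tfrac3{20}e_1^2$, $g_3=\tfrac85e_1^3$, and substitution yields $f(3e_1)=0$ together with $f'(3e_1)=\tfrac{144}{15}\eta_1e_1$; since $e_1\ne0$ and $\eta_1(\widehat\beta),\eta_1(\beta)\ne0$ (a direct check), $3e_1$ is a simple zero of $f$ and a double zero of $Q_3$ lying in $\sigma(L)$ (it is the limit of the band $[\vartheta_-,\vartheta_+]$ and $\sigma(L)$ is closed), so by (ii) $d(\vartheta_\pm)=d(3e_1)=2$; this proves (d3). Finally, for (d2) one must show $f$ and $p_2p_3$ are coprime for every $b>0$; since $p_2p_3$ is a real quartic and $f$ a real cubic, $\operatorname{Res}_\l(f,p_2p_3)=|f(\mu)|^2|f(\nu)|^2\ge0$, and the claim is that this resultant never vanishes. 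This is a direct computation: expressing $p_2p_3$ and $f$ through $e_1,c,\eta_1$, a hypothetical common root would force $f=\tfrac4{15}(\l-\mu)(\l-\overline\mu)(\l-\rho)$ with $\rho\in\R$, and the resulting algebraic constraints on $(e_1,c,\eta_1)$ are incompatible with $c\ne0$; hence $d(\mu)=d(\overline\mu)=d(\nu)=d(\overline\nu)=1$. The two steps that need genuine work are the monotonicity in (d1) and this resultant computation in (d2).
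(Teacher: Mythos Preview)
Your treatment of (z1)--(z4) is correct and genuinely different from the paper's.  The paper shows that $\Delta_{R_1}(b)=12(5g_2-12e_1^2)$ is monotone on $[b_g,\infty)$ by computing $d\Delta_{R_1}/db$ case-by-case, and then transports the single zero $\beta$ to $\widehat\beta=1/(4\beta)$ via the same $\gamma\in SL_2(\Z)$ you use.  Your theta-constant reduction to the palindromic quartic $r^4-252r^3+518r^2-252r+1=0$ is more explicit and avoids the differential inequalities entirely; this is a nice alternative and, once one checks that $b\mapsto k(4ib)$ is a decreasing bijection $(0,\infty)\to(0,1)$, it cleanly gives exactly two zeros.

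Your approach to the $d(\cdot)$ statements, however, has a genuine gap and one avoidable detour.  The gap is the identity $Q_3(\l)\Delta'(\l)^2=\kappa^2(4-\Delta(\l)^2)f(\l)^2$: this is equivalent to $A'(\l)=\pm i\kappa f(\l)/\sqrt{Q_3(\l)}$, and while such a formula is indeed part of finite-gap folklore, it is \emph{not} what Theorem~\ref{thm-inner} (or its proof) establishes.  That proof only shows that, away from zeros of $Q_3$, the zeros of $A'$ coincide with those of $f$; it does not identify $(A')^2Q_3$ as a polynomial, nor that polynomial with $f^2$.  Your order-matching deductions (i) and (ii) are correct \emph{given} the identity, but the identity itself needs an independent argument.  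The paper bypasses this entirely: it uses $d(\l)=\operatorname{ord}_\l Q_3+2p_i(\l)$ together with the criterion (Theorem~\ref{thm-degree}) that $d(\l_\a)\ge 3$ iff $\chi_\a=0$, and computes $\chi_\a$ directly at each branch point.

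The detour is (d2).  Your proposed resultant computation is hard precisely because $\eta_1$ is only quasi-modular, so the ``algebraic constraints on $(e_1,c,\eta_1)$'' you obtain cut out a codimension-one locus in three-space that must then be shown disjoint from the transcendental curve $b\mapsto(e_1(b),c(b),\eta_1(b))$.  The paper avoids this completely: since $\sigma(L_\tau^3)$ is symmetric under conjugation and $\mu\notin\R$ is a \emph{simple} zero, $d(\mu)\ge 3$ would force $d(\overline\mu)\ge 3$ as well, contradicting the fact (Theorem~\ref{thm-degree}(2)) that at most one endpoint can have $d\ge 3$.  This one-line argument gives (d2) immediately and also covers (d3) on $(\widehat\beta,\beta)$, where $\vartheta_\pm$ are likewise non-real simple zeros; your reduction of $f$ modulo $p_1$ works there but is unnecessary.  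For (d1) both you and the paper reduce to $3g_3-2\eta_1 g_2=0$, i.e.\ $g_2'(b)=0$; the paper simply cites the uniqueness of this zero as Proposition~\ref{pro-g} (from \cite{CL-E4}), which is the content of the monotonicity you flag as ``the main technical point.''
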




Note that $Q_3(\l; \frac{1}{2}+bi)$  has double zeros at $b\in\{\beta,\widehat{\beta}\}$.  Here we mainly study the deformation of the spectrum $\sigma(L_\tau^3)$ along $\tau=\frac{1}{2}+bi$ with $b$ varying around $\widehat{\beta}$ and $\beta$, respectively.
  We discover $10$ different patterns for the spectrum stated in the following theorem. See also the following figure for these $10$ rough graphs.

 \begin{theorem}\label{thm-main}
 Let $\tau=\frac{1}{2}+ib$ with $b>0$ and denote $L_b^3:=L_\tau^3$. Then the spectrum $\sigma(L_b^3)$ is symmetric with respect to the real line $\R$. Furthermore,
 there exist $\varepsilon>0, \delta>0$ sufficient small  and  some $k_1\in (0, \widehat\beta)$, $\alpha\approx 0.23217$ 
such that
 the rough graphs of the spectra $\sigma(L_b^3)$  for $b\in (0, \alpha+\varepsilon)\cup (\beta-\delta, +\infty)$ are described as follows.

Here, the notations $\sigma_i$ with $i=1,2,3$ denote simple arcs symmetric with respect to $\R$ and they are disjoint with each other. The notation $\sigma_4$ denotes a simple arc in $\{z\in \mathbb{C}\,|\,\operatorname{Im} z\geq 0\}$ and $\overline{\sigma_4}$ denotes the conjugate of $\sigma_4$. The notations $\lambda_\cdot$ denote real  roots of $f$, where $\lambda_-<0$ and $\lambda_+>0$.

\medskip
\begin{center}
     {\begin{tabular}{ccccccc}
\hline\\
\begin{tikzpicture}
\draw[thick] (-1.5,0)--(-0.75,0);
\draw[thick] (-0.35,0)--(0,0);
      \draw [thick,domain=-30:30] plot ({cos(\x)-1.5}, {sin(\x)});
         \draw [thick,domain=150:210] plot ({cos(\x)+1.25}, {sin(\x)});
  \fill ({cos(30)-1.5},{sin(30)}) circle (1.5pt) ;
   \fill ({cos(30)-1.5},{-sin(30)}) circle (1.5pt) ;
      \fill ({cos(150)+1.25},{sin(150)}) circle (1.5pt) ;
            \fill ({cos(150)+1.25},{-sin(150)}) circle (1.5pt) ;
   \fill (0,0) circle (1.5pt) node[below]{\tiny{0}};
      \fill (-0.35,0) circle (1.5pt);
 \fill(-0.75,0) circle (1.5pt);
       \draw(-1.5, 0.1) node[below]{\tiny{$-\infty$}};
\end{tikzpicture}
  & &
\begin{tikzpicture}
\draw[thick] (-1.5,0)--(-0.75,0);
\draw[thick] (-0.35,0)--(0,0);
      \draw [thick,domain=-30:30] plot ({cos(\x)-1.75}, {sin(\x)});
         \draw [thick,domain=150:210] plot ({cos(\x)+1.25}, {sin(\x)});
  \fill ({cos(30)-1.75},{sin(30)}) circle (1.5pt) ;
   \fill ({cos(30)-1.75},{-sin(30)}) circle (1.5pt) ;
      \fill ({cos(150)+1.25},{sin(150)}) circle (1.5pt) ;
            \fill ({cos(150)+1.25},{-sin(150)}) circle (1.5pt) ;
   \fill (0,0) circle (1.5pt) node[below]{\tiny{0}};
      \fill (-0.35,0) circle (1.5pt);
 \fill(-0.75,0) circle (1.5pt);
       \draw(-1.5, 0.1) node[below]{\tiny{$-\infty$}};
\end{tikzpicture}
  & &\begin{tikzpicture}
\draw[thick] (-1.5,0)--(-0.75,0);
\draw[thick] (-0.35,0)--(0,0);
      \draw [thick,domain=-30:30] plot ({cos(\x)-2}, {sin(\x)});
         \draw [thick,domain=150:210] plot ({cos(\x)+1.25}, {sin(\x)});
  \fill ({cos(30)-2},{sin(30)}) circle (1.5pt) ;
   \fill ({cos(30)-2},{-sin(30)}) circle (1.5pt) ;
      \fill ({cos(150)+1.25},{sin(150)}) circle (1.5pt) ;
            \fill ({cos(150)+1.25},{-sin(150)}) circle (1.5pt) ;
   \fill (0,0) circle (1.5pt) node[below]{\tiny{0}};
      \fill (-0.35,0) circle (1.5pt);
 \fill(-0.75,0) circle (1.5pt);
      \fill (-1,0) circle (1.5pt);
       \draw(-1.5, 0.1) node[below]{\tiny{$-\infty$}};
\end{tikzpicture}&&
  \begin{tikzpicture}
\draw[thick] (-1.5,0)--(0,0);
      \draw [thick,domain=-30:30] plot ({cos(\x)-2}, {sin(\x)});
         \draw [thick,domain=150:210] plot ({cos(\x)+1.25}, {sin(\x)});
         \fill ({cos(30)-2}, {sin(30)}) circle (1.5pt);
         \fill ({cos(30)-2}, {-sin(30)}) circle (1.5pt);
         \fill ({cos(150)+1.25}, {sin(150)}) circle (1.5pt);
         \fill ({cos(150)+1.25}, {-sin(150)}) circle (1.5pt);
   \fill (0,0) circle (1.5pt) node[below]{\tiny{0}};
      \fill (-0.5,0) circle (1.5pt) node[below]{\tiny{$3e_1$}};
         \fill (-1,0) circle (1.5pt);
       \draw(-1.5, 0.1) node[below]{\tiny{$-\infty$}};
\end{tikzpicture}
  \\

\scriptsize{ $0<b<k_1$}&& \scriptsize{ $b=k_1$} && \scriptsize{ $k_1<b<\widehat\beta$}
 & &\scriptsize{$b=\widehat{\beta}$}
 \\
  \hline\\
    \begin{tikzpicture}
\draw[thick] (-1.25,0)--(0,0);
   \draw [thick,domain=150:210] plot ({cos(\x)+0.5}, {sin(\x)});
      \draw [thick,domain=-30:30] plot ({cos(\x)-1.75}, {sin(\x)});
         \draw [thick,domain=150:210] plot ({cos(\x)+1.25}, {sin(\x)});
 \fill ({cos(30)-1.75},{sin(30)}) circle (1.5pt);
   \fill ({cos(30)-1.75},{-sin(30)}) circle (1.5pt);
   \fill ({cos(150)+1.25},{sin(150)}) circle (1.5pt) ;
      \fill ({cos(150)+1.25},{-sin(150)}) circle (1.5pt) ;
         \fill ({cos(150)+0.5},{sin(150)}) circle (1.5pt) ;
            \fill ({cos(150)+0.5},{-sin(150)}) circle (1.5pt) ;
   \fill (0,0) circle (1.5pt) node[below]{\tiny{0}};
      \fill(-0.5,0) circle (1.5pt);
         \fill (-0.75,0) circle (1.5pt);
       \draw(-1.25, 0.1) node[below]{\tiny{$-\infty$}};
\end{tikzpicture}
&&
\begin{tikzpicture}
\draw[thick] (-1.25,0)--(0,0);
    \draw [thick,domain=-170:-10] plot ({0.25*cos(\x)-0.5}, {0.5*sin(\x)+0.5});
    \draw [thick,domain=10:170] plot ({0.25*cos(\x)-0.5}, {0.5*sin(\x)-0.5});
         \draw [thick,domain=150:210] plot ({cos(\x)+1.25}, {sin(\x)});
   \fill ({0.25*cos(10)-0.5},{-0.5*sin(10)+0.5}) circle (1.5pt);
   \fill ({0.25*cos(170)-0.5},{-0.5*sin(170)+0.5}) circle (1.5pt);
   \fill ({0.25*cos(10)-0.5},{0.5*sin(10)-0.5}) circle (1.5pt) ;
      \fill ({0.25*cos(170)-0.5},{0.5*sin(170)-0.5}) circle (1.5pt) ;
         \fill ({cos(210)+1.25},{sin(210)}) circle (1.5pt) ;
      \fill ({cos(210)+1.25},{-sin(210)}) circle (1.5pt) ;
   \fill (0,0) circle(1.5pt) node[below]{\tiny{0}};
      \fill (-0.5,0) circle (1.5pt);
     \draw(-1.25, 0.1) node[below]{\tiny{$-\infty$}};
\end{tikzpicture}
&&
 \begin{tikzpicture}
\draw[thick] (-1.25,0)--(0,0);
    \draw [thick,domain=-145:-45] plot ({0.5*cos(\x)-0.5}, {0.5*sin(\x)+0.6});
    \draw [thick,domain=45:145] plot ({0.5*cos(\x)-0.5}, {0.5*sin(\x)-0.6});
         \draw [thick,domain=150:210] plot ({cos(\x)+1.25}, {sin(\x)});
   \fill ({0.5*cos(45)-0.5},{-0.5*sin(45)+0.6}) circle (1.5pt);
      \fill ({0.5*cos(145)-0.5},{-0.5*sin(145)+0.6}) circle (1.5pt);
   \fill ({0.5*cos(45)-0.5},{0.5*sin(45)-0.6}) circle (1.5pt) ;
     \fill ({0.5*cos(145)-0.5},{0.5*sin(145)-0.6}) circle (1.5pt) ;
        \fill ({cos(150)+1.25},{sin(150)}) circle (1.5pt) ;
        \fill ({cos(150)+1.25},{-sin(150)}) circle (1.5pt) ;
   \fill (0,0) circle (1.5pt) node[below]{\tiny{0}};
   \draw(-1.25, 0.1) node[below]{\tiny{$-\infty$}};
\end{tikzpicture} && unknown
\\ 
\scriptsize{$\widehat{\beta}<b<\alpha$}&&\scriptsize{$b=\alpha$} && \scriptsize{$\alpha<b<\alpha+\varepsilon$}  && \scriptsize{$\alpha+\varepsilon\leq b\leq \beta-\delta$}\\
\hline\\
\begin{tikzpicture}
\draw[thick] (-1.5,0)--(0,0);
      \draw [thick,domain=-30:30] plot ({cos(\x)-1.75}, {sin(\x)});
         \draw [thick,domain=220:280] plot ({cos(\x)+0.5}, {sin(\x)+1.25});
            \draw [thick,domain=80:140] plot ({cos(\x)+0.5}, {sin(\x)-1.25});
  \fill ({cos(30)-1.75},{sin(30)}) circle (1.5pt);
    \fill ({cos(30)-1.75},{-sin(30)}) circle (1.5pt);
    \fill ({cos(220)+0.5},{sin(220)+1.25}) circle (1.5pt);
     \fill ({cos(280)+0.5},{sin(280)+1.25}) circle (1.5pt);
         \fill ({cos(80)+0.5},{sin(80)-1.25}) circle (1.5pt);
     \fill ({cos(140)+0.5},{sin(140)-1.25}) circle (1.5pt);
   \fill (0,0) circle (1.5pt) node[below]{\tiny{0}};
         \fill (-0.75,0) circle (1.5pt);
      \draw(-1.5, 0) node[below]{\tiny{$-\infty$}};
\end{tikzpicture}
  &&
  {\begin{tikzpicture}
\draw[thick] (-1,0)--(0,0);
   \draw [thick,domain=-30:30] plot ({cos(\x)-1.5}, {sin(\x)});
   \draw [thick,domain=150:210] plot ({cos(\x)+1.5}, {sin(\x)});
   \fill ({cos(30)-1.5},{sin(30)}) circle (1.5pt);
    \fill ({cos(30)-1.5},{-sin(30)}) circle (1.5pt) ; 
   \fill (0,0) circle (1.5pt) node[below]{\tiny{0}};
      \fill (0.5,0) circle (1.5pt) node[right]{\tiny{$3e_1$}};
         \fill ({cos(150)+1.5},{sin(150)}) circle (1.5pt) ; 
    \fill ({cos(210)+1.5},{-sin(150)}) circle (1.5pt); 
      \fill(-0.5,0) circle (1.5pt);
    \draw(-1, 0.1) node[below]{\tiny{$-\infty$}};
\end{tikzpicture}}
&&
{\begin{tikzpicture}
\draw[thick] (-0.25,0)--(0.75,0);
\draw[thick]  (1.1,0)--(1.6,0);
   \draw [thick,domain=-30:30] plot ({cos(\x)-0.65}, {sin(\x)});
   \draw [thick,domain=150:210] plot ({cos(\x)+2.35}, {sin(\x)});
   \fill ({cos(30)-0.65},{sin(30)}) circle (1.5pt);
    \fill ({cos(30)-0.65},{-sin(30)}) circle (1.5pt); 
   \fill(0.75,0) circle (1.5pt) node[below]{\tiny{0}};
      \fill (0.35,0) circle (1.5pt);
         \fill ({cos(150)+2.35},{sin(150)}) circle (1.5pt);
    \fill ({cos(210)+2.35},{-sin(150)}) circle (1.5pt) ; 
    \fill (1.1,0) circle (1.5pt);
      \fill (1.6,0) circle (1.5pt);
      \fill (1.35,0) circle (1.5pt);
      \draw(-0.25, 0.1) node[below]{\tiny{$-\infty$}};
\end{tikzpicture}}&&\\
\scriptsize{$\beta-\delta< b<\beta$} &&
\scriptsize{$b=\beta$}&&\scriptsize{$b>\beta$}&&\\
\hline  \\
&&Figure 3 &&&&
\end{tabular}}
 \end{center}
 \medskip

 \begin{enumerate}
 \item If $0<b<k_1$, then $\vartheta_-<3e_1<\vartheta_+<0$ and
     $$\sigma(L_b^3)=(-\infty, \vartheta_-]\cup[\vartheta_+, 0]\cup \sigma_1\cup \sigma_2$$
     with $\sigma_1\cap (0,+\infty)=\{\text{one point}\}$ and  $\sigma_2\cap (\vartheta_-, \vartheta_+)=\{\text{one point}\}$.
      \item If $b=k_1$, then $\vartheta_-<3e_1<\vartheta_+<0$ and
     $$\sigma(L_b^3)=(-\infty, \vartheta_-]\cup[\vartheta_+, 0]\cup \sigma_1\cup \sigma_2$$
     with $\sigma_1\cap (0,+\infty)=\{\text{one point}\}$ and  $\sigma_2\cap (-\infty,\vartheta_-]=\{\vartheta_-\}$.
     \item If $k_1<b<\widehat{\beta}$, then $\vartheta_-<3e_1<\vartheta_+<0$ and
     $$\sigma(L_b^3)=(-\infty, \vartheta_-]\cup[\vartheta_+, 0]\cup \sigma_1\cup \sigma_2$$
     with $\sigma_1\cap (0,+\infty)=\{\text{one point}\}$ and  $\sigma_2\cap (-\infty, \vartheta_-)=\{\l_-\}$. 
        \item If $b=\widehat{\beta}$, then  $\vartheta_-=\vartheta_+=3e_1<0$ and
     $$\sigma(L_{\widehat{\beta}}^3)=(-\infty, 0]\cup \sigma_1\cup \sigma_2$$
     with $\sigma_1\cap (0,+\infty)=\{\text{one point}\}$ and  $\sigma_2\cap (-\infty, 0)=\{\l_-\}$. 
      \item If $\widehat{\beta}<b<\alpha$, then $f(\lambda)$ has three real roots $\l_-<\l_-'<0<\l_+$ and
     $$\sigma(L_b^3)=(-\infty, 0]\cup \sigma_1\cup \sigma_2\cup \sigma_3$$
     with $\sigma_1\cap (0,+\infty)=\{\text{one point}\}$, $\sigma_2\cap(-\infty, 0)=
     \{\l_-\}$ and $\sigma_3\cap(-\infty,0)=
     \{\l_-'\}$. 
   \item If $b=\alpha$, then
     $$\sigma(L_b^3)=(-\infty, 0]\cup \sigma_1\cup \sigma_4\cup \overline{\sigma_4}$$
     with $\sigma_1\cap (0,+\infty)=\{\text{one point}\}$ and  $\sigma_4\cap(-\infty, 0)=\overline{\sigma_4}\cap(-\infty, 0)=
     \{\l_-\}$. 
      \item If $\alpha<b<\alpha+\varepsilon$, then
     $$\sigma(L_b^3)=(-\infty, 0]\cup \sigma_1\cup \sigma_4\cup \overline{\sigma_4}$$
     with $\sigma_1\cap (0,+\infty)=\{\text{one point}\}$ and  $\sigma_4\cap\R=
     \emptyset$.
      \item If $\beta-\delta\leq b<\beta$, then
     $$\sigma(L_b^3)=(-\infty, 0]\cup \sigma_1\cup \sigma_4\cup \overline{\sigma_4}$$
     with $\sigma_1\cap (-\infty, 0)=\{\lambda_-\}$ and  $\sigma_4\cap\R=
     \emptyset$. 
      \item If $b=\beta$, then $\vartheta_-=\vartheta_+=3e_1>0$ and
     $$\sigma(L_b^3)=(-\infty, 0]\cup \sigma_1\cup \sigma_2$$
     with $\sigma_1\cap \mathbb{R}=\{3e_1\}$ and  $\sigma_2\cap(-\infty, 0)=
     \{\l_-\}$. 
         \item If $b>\beta$, then $0<\vartheta_-<3e_1<\vartheta_+$ and
     $$\sigma(L_b^3)=(-\infty, 0]\cup [\vartheta_-, \vartheta_+]\cup \sigma_1\cup \sigma_2$$
     with $\sigma_1\cap (\vartheta_-, \vartheta_+)=\{\l_+\}$ and  $\sigma_2\cap(-\infty, 0)=
     \{\l_-\}$. 
 \end{enumerate}
 \end{theorem}

\begin{remark}
\begin{enumerate}
    \item The other cases for $b\in [\alpha+\varepsilon, \beta-\delta]$ seem difficult and remain open.
    \item  By comparing Figure 3 with Figure 2, we have a surprising observation. The first 7 graphs in Figure 3 can be obtained from the first 7 graphs in Figure 2 by adding the spectral arc $\sigma_1$ respectively, and the last 3 graphs in Figure 3 can be obtained from the last 3 graphs in Figure 2 by adding the spectral arc $\sigma_2$ respectively.
\end{enumerate}
\end{remark}


\section{Spectral theory of Lam\'e operators}
\label{sec-pre}

In this section, we briefly review some preliminary results about the spectral theory of Lam\'e operators $L_\tau^g$ that are needed in later sections.

Recall the classical Lam\'e equation
\begin{equation*}
   \mathcal{L}(g,\tau,\lambda): \quad y''(z)-g(g+1)\wp(z;\tau) y(z)=\lambda y(z),  \qquad z\in \mathbb{C}.
\end{equation*}
 Let $Y(z; \tau, \lambda)=\left(y_1(z;\tau, \lambda),y_2(z;\tau, \lambda)\right)$ be a fundamental matrix of $\mathcal{L}(g, \tau,\lambda)$ near a fixed base point $p_0\in T_\tau^*:=T_\tau\setminus \{0\}$. In general, $Y(z; \tau, \lambda)$ is multiple-valued with respect to $z$ and might have branch points at $0$. Note that $g\in \Z$, it is well-known (cf. \cite{GW1}) that 
 any solution of $\mathcal{L}(g, \tau, \lambda)$ is meromorphic in $\mathbb{C}$. Hence, $Y(z; \tau, \lambda)$ has no branch points at $0$.
For any loop $\gamma\in \pi_1(T_\tau, p_0)$, denote by $\gamma^*Y(z; \tau, \lambda)$ the analytic continuation of $Y(z; \tau, \lambda)$ along the loop $\gamma$, then there exists a matrix $M(\gamma;\tau, \lambda)\in \mathrm{SL}(2, \mathbb{Z})$ such that $\gamma^*Y(z; \tau, \lambda)=Y(z;\tau, \lambda)M(\gamma;\tau, \lambda)$.  This induces a group homomorphism
$$\rho(\tau, \lambda): \pi_1(T_\tau, p_0)\to \mathrm{SL}(2, \mathbb{Z})$$
which is called the monodromy representation of $\mathcal{L}(g, \tau, \lambda)$. By the deck transformations, it is clear to see that the monodromy group is generated by two matrices $M_1(\tau, \lambda), \, M_2(\tau, \lambda) \in \mathrm{SL}(2, \mathbb{Z})$ satisfying
$$Y(z+\omega_i; \tau, \lambda)=Y(z;\tau, \lambda)M_i(\tau, \lambda), \qquad i=1, 2,$$
and $M_1M_2=M_2M_1$ because $\pi_1(T_\tau, p_0)\cong \mathbb{Z}^2$ is abelian.
So $\rho(\tau, \lambda)$ is reducible and then $M_1(\tau, \lambda), \,M_2(\tau, \lambda)$ can be normalized to satisfy one of the following  two cases (see \cite[Section 2]{CKL1}):

\begin{enumerate}
    \item If $\rho(\tau, \lambda)$ is completely reducible, then
    \begin{equation*}
        M_1(\tau, \lambda)=\left(\begin{array}{cc}e^{-2\pi i s}& 0\\0&e^{2\pi i s}\end{array}\right),\,
         M_2(\tau, \lambda)=\left(\begin{array}{cc}e^{2\pi i r}& 0\\0&e^{-2\pi i r}\end{array}\right),\, (r,s)\in \mathbb{C}^2\setminus \tfrac{1}{2}\mathbb{Z}^2;
    \end{equation*}
 \item If  $\rho(\tau, \lambda)$ is not completely reducible, then
  \begin{equation*}
        M_1(\tau, \lambda)=\epsilon_1\left(\begin{array}{cc}1& 0\\1&1\end{array}\right),\quad
         M_2(\tau, \lambda)=\epsilon_2\left(\begin{array}{cc}1& 0\\ \mathcal{C}&1\end{array}\right),\quad \epsilon_j=\pm 1, \quad \mathcal{C}\in  \mathbb{C}\cup \{\infty\};
    \end{equation*}
    When $\mathcal{C}=\infty$, the monodromy matrices are understood as
     \begin{equation*}
        M_1(\tau, \lambda)=\epsilon_1\left(\begin{array}{cc}1& 0\\0&1\end{array}\right),\quad
         M_2(\tau, \lambda)=\epsilon_2\left(\begin{array}{cc}1& 0\\1&1\end{array}\right).
    \end{equation*}
\end{enumerate}

In particular, $y_1(z; \tau, \lambda)$ can be chosen as a common eigenfunction of $M_1(\tau, \lambda)$ and $M_2(\tau, \lambda)$.
By a direct computation, the local exponent of $\mathcal{L}(g, \tau, \lambda)$ at $0$ is either $-g$ or $g+1$, then a classic theorem says that up to a constant, $y_1(z;\tau, \lambda)$ can be written as
\begin{equation*}
        y_{\mathbf{a}}(z;c)=e^{cz}\frac{\prod_{i=1}^g\sigma(z-a_i)}{\sigma(z)^g},
    \end{equation*}
with $c\in \C$ and $\mathbf{a}=\{a_1, \cdots, a_g\}\in \mathrm{Sym}^g (\mathbb{C}\setminus \Lambda_\tau)$. 

\begin{remark}
If $y_\a(z;c)$ is a solution of $\mathcal{L}(g,\tau, \lambda)$, then $c$ and $\l$ are uniquely determined by $\a$. Indeed, if both $y_\a(z;c_1)$ and $y_\a(z;c_2)$ are solutions of $\mathcal{L}(g,\tau, \lambda)$, a direct computation gives us $c_1=c_2:=c_\a$, then $\l$ is uniquely determined by $y_\a(z; c_\a)$ and denoted by $\l_\a$.
\end{remark}

\begin{theorem}\cite[Theorem 6.2]{CLW}\label{thm-1}
  Let $\mathbf{a}=\{a_1, \cdots, a_g\}\in \mathrm{Sym}^g (\mathbb{C}\setminus \Lambda_\tau)$.   Then $y_{\a}(z;c)$
    is a solution of $\mathcal{L}(g, \tau, \lambda)$ if and only if $a_i-a_j\notin\Lambda_\tau$ for all $i\neq j$ and
    \begin{equation}\label{cond-1}
        \sum\limits_{j\not =i}^{g}(\zeta(a_{i}-a_{j})+\zeta(a_{j})-\zeta(a_{i}))=0, \qquad 1\leq i\leq g.
    \end{equation}
    Moreover, if $y_{\a}(z;c)$
    is a solution of $\mathcal{L}(g, \tau, \lambda)$, then
\begin{align}
    c=c_\a&:=\sum\limits_{i=1}^g \zeta(a_i),\\
    \lambda=\lambda_\a&:=(2g-1)\sum\limits_{i=1}^g\wp(a_i).
\end{align}
\end{theorem}

Denote by $[\a]=\{[a_1], \cdots, [a_g]\}$ with $[a_i]:=a_i\, \mathrm{mod} \Lambda_\tau \in T_\tau^*=T_\tau\setminus \{0\}$. From Theorem \ref{thm-1}, if  $[\a']=[\a]$, we have $\l_\a=\l_{\a'}$,  so $\l_{[\a]}:=\l_\a$ is well defined and then     $y_{\a}(z;c_{\a})$, $y_{\a'}(z;c_{\a'})$
  are linearly dependent solutions of $\mathcal{L}(g, \tau, \l_{[\a]})$. Therefore, we often write $\a$ instead of $[\a]$ to simplify notations when no confusion arises.
Define
\begin{equation*}
\begin{aligned}
   Y_\tau^g&:=\left\{[\a]\in \mathrm{Sym}^gT_\tau^*\,|\,y_{\mathbf{a}}(z;c) \text{  is a solution of } \mathcal{L}(g,\tau, \lambda) \text{ for some } c \text{ and } \lambda. \right\}\\
   &=\left\{ [\a] \in \mathrm{Sym}^g T_\tau^*
\left \vert
\begin{array}
[c]{l}%
\text{ }[a_{i}]\neq [a_{j}]\text{ for any }i\not =j,\\
\sum\limits_{j\not =i}^{g}(\zeta(a_{i}-a_{j})+\zeta(a_{j})-\zeta(a_{i}))=0,
\text{
}\forall i.
\end{array}
\right.  \right\}.
\end{aligned}
        \end{equation*}
 Clearly, if $[\a]\in Y_{\tau}^g$, then
 $[-\a]:=\{[-a_{1}],\cdot \cdot \cdot,[-a_{g}]\} \in Y_{\tau}^g$ and $\l_{[-\a]}=\l_{[\a]}$. Note that the Wronskian of  $y_\a(z;c_\a)$ and $y_{-\mathbf{a}}(z; c_{-\mathbf{a}})$  is a constant, then either $[\a]=[-\a]$ or $[\a]\cap [-\a]=\emptyset$. Moreover, $y_{\a}(z)$ and
$y_{-\a}(z)$ are linearly independent  if and only if $[\a]\cap [-\a]=\emptyset$. We have the following branched covering map of degree 2 (see \cite[Theorem 7.4]{CLW}):
\begin{equation}\label{fun-l}
    \begin{aligned}
       \lambda_\cdot: Y_\tau^{g}&\to \C\\
        [\a]&\mapsto \lambda_{[\a]}:=\l_\a.
    \end{aligned}
\end{equation}
We call $[\a]\in Y_{\tau}^g$ a \emph{branch
point} of $Y_{\tau}^g$ if $[\a]=[-\a]$.

\begin{remark}
If $[\a]\in Y_\tau^g$ is not a branch point, then it was proved in \cite[Proposition 5.8.3]{CLW} that
\begin{equation}
\sum_{j\not =i}(\zeta(a_{i}-a_{j})+\zeta(a_{j})-\zeta(a_{i}))=0,\text{
}\forall1\leq i\leq g,\label{ff3-1}%
\end{equation}
 is equivalent to
\begin{equation}\label{eq-nonbranch}
  \sum_{j=1}^{g}\wp^{\prime}(a_{j})\wp(a_{j})^{l}=0,\quad\forall\,0\leq l\leq g-2.
\end{equation}
\end{remark}
By \eqref{eq-nonbranch}, it follows that
for $[\a]\in Y_\tau^g$, we have
\begin{equation*}
g_{[\a]}(z):=\sum_{i=1}^{g}\frac{\wp^{\prime}(a_{i})}{\wp
(z)-\wp(a_{i})}=\frac{C_{[\a]}}{\prod_{i=1}^{n}(\wp(z)-\wp(a_{i}%
))}\label{g=CP-1}%
\end{equation*}
for a constant 
\begin{equation*}
C_{[\a]}=\wp^{\prime}(a_{i})\prod_{j\neq i}(\wp(a_{i})-\wp
(a_{j})),\; \text{ which is independent of }i.\label{Ca-1}%
\end{equation*}
Remark that $C_{[\a]}=0$ if and only if $[\a]$ is a branch point.

\begin{theorem}\label{thm-Q}\cite[Theorem 7.4]{CLW} There is a so-called spectral polynomial $Q_{g}(\l;\tau
)\in \mathbb{Q}[g_{2}(\tau),g_{3}(\tau)][\l]$ of degree $2g+1$ such that if
$[\a]\in Y_\tau^g$, then $C_{[\a]}^{2}=Q_{g}(\l_{[\a]}; \tau)$. Furthermore, $Q_g(\l;\tau)$ is homogeneous of weight $2g+1$ in $\l, g_2, g_3$ when $\l, g_2, g_3$ are given weights $1,2,3$ respectively.
\end{theorem}

This spectral polynomial $Q_{g}(\l; \tau)$ is called the \emph{Lam\'{e} polynomial} in the
literature.
Theorem \ref{thm-Q} implies%
\begin{equation}\label{eq-hy}
    Y_\tau^g\cong \left \{  (\l,C)\text{ }|\text{ }C^{2}=Q_{g}(\l;\tau)\right \} 
\end{equation}
Therefore, $Y_{\tau}^g$ is a
hyperelliptic curve, known as the \textit{Lam\'{e} curve}.
And $[\a]=[-\a]$ is a branch point of $Y_\tau^g$ if and only if $Q_g(\l_{[\a]}; \tau)=0$.

Let $[\a] \in Y_\tau^g$.
The Legendre relation $\tau\eta_1-\eta_2=2\pi i$ implies that there is a unique $(r,s)\in\mathbb{C}^2$ satisfying
\[r+s\tau=a_1+\cdots+a_g \quad \text{and}\quad r\eta_1+s\eta_2=\zeta(a_1)+\cdots+\zeta(a_g),\]
which is equivalent to
\begin{equation}\label{eq-sr}
\begin{aligned}
  \zeta(a_1)+\cdots+\zeta(a_g)-\eta_1(a_1+\cdots+a_g)=-2\pi i s,\\
\tau(\zeta(a_1)+\cdots+\zeta(a_g))-\eta_2(a_1+\cdots+a_g)=2\pi i r.
\end{aligned}
\end{equation}
Furthermore, the transformation law
$\sigma(z+\omega_j)=-e^{(z+\frac{\omega_j}{2})\eta_j}\sigma(z)$ with $j=1,2$ implies
\begin{equation}\label{eq-eigen}
\begin{aligned}y_{\pm \a}(z+1;c_{\pm\a})&=e^{\pm\sum_{j=1}^{g}(\zeta(a_j)-\eta_1a_j)}
y_{\pm \a}(z;c_{\pm\a})=e^{\mp 2\pi is}y_{\pm \a}(z;c_{\pm\a}),\\
y_{\pm \a}(z+\tau;c_{\pm\a})&=e^{\pm\tau\sum_{j=1}^{g}(\zeta(a_j)-\eta_2a_j)}
y_{\pm \a}(z;c_{\pm\a})=e^{\pm 2\pi i r}y_{\pm \a}(z;c_{\pm\a}),
\end{aligned}\end{equation}
namely $y_{\pm \boldsymbol{a}}(z;c_{\pm\a})$ are elliptic of the second kind.
Note that $y_{\pm\a}(z;c_{\pm\a})$ are solutions of $\mathcal{L}(g,{\tau, \l_\a})$, then $y_{\pm \a}(x+z_0;c_{\pm\a})$ are solutions of $L_\tau^gy=\l_{\a}y$.


\textbf{Case 1.} If $[\a]$ is not a branch point, i.e., $[\a]\cap [-\a]=\emptyset$, then $y_\a(x+z_0; c_\a)$ and $y_{-\a}(x+z_0; c_{-\a})$ are linearly independent solutions of $L_\tau^gy=\l_{[\a]}y$ and satisfy
\begin{equation}\label{eq-eigen1}
y_{\pm \a}(x+z_0+1)=e^{\mp 2\pi is}y_{\pm \a}(x+z_0)\quad \text{ with }\,s\in \C\setminus \tfrac{1}{2}\mathbb{Z}. \end{equation}
In this case, the monodromy representation $\rho(\tau, \l_{\a})$ is completely reducible.

\textbf{Case 2.} If $[\a]$ is a branch point, i.e., $[\a]=[-\a]$, then  $y_\a(x+z_0; c_\a)$ and $y_{-\a}(x+z_0; c_{-\a})$ are linearly dependent solutions of $L_\tau^gy=\l_{[\a]}y$. By (\ref{eq-eigen}), we get $2r, 2s\in \Z$.
It was proved in \cite[Theorem 2.6]{CL-PAMS} that there is a solution $y_2(z)$ linearly independent with $y_{\a}(z)$ such that (note $e^{ 2\pi i s}=e^{ -2\pi i s}=\pm1$)
\begin{equation}\label{eq-eigen2}
y_{\a}(z+1)=e^{2\pi i s}y_{\a}(z),\text{ \ }y_{2}(z+1)=e^{ 2\pi i s}y_{2}(z)+e^{2\pi i s}\chi_{[\a]}
y_{\a}(z), \end{equation}
where
\begin{equation}\label{eq-chi}
\begin{aligned}
\chi_{[\a]}&=-\sum\limits_{i=1}^n c_{a_i} \left(\wp(a_i)+\eta_1\right), \\
c_{a_i}&= 2 \wp^{\prime \prime}(a_{i})^{-1} \prod_{j \ne i}
(\wp(a_{i}) - \wp(a_{j}))^{-1},\quad \text{if }\; a_i=-a_i,
\\
c_{a_i} &=c_{-a_i} = \wp^{\prime}(a_{i})^{-2} \prod_{j \ne
i,\,i^{*}} (\wp(a_{i}) - \wp(a_{j}))^{-1},\quad \text{if }\; a_i\neq -a_i.
\end{aligned}
\end{equation}
In this case,  $\rho(\tau, \l_{\a})$ is not completely reducible.

\subsection{Spectrum of the Lam\'e operator}

Let $y_{1}(x)$ and $y_{2}(x)$ be any two linearly independent solutions of
\begin{equation}\label{eq-lame-x}
    L_\tau^g y=\l y.
\end{equation}
 Then so do $y_{1}(x+1)$ and $y_{2}(x+1)$ and
hence there is a monodromy matrix $M(\l)\in SL(2,\mathbb{C})$ such that
\begin{equation*}
    (y_{1}(x+1),y_{2}(x+1))=(y_{1}(x), y_{2}(x))M(\l).
\end{equation*}
Define the \emph{Hill's discriminant} $\Delta (\l)$ by
\begin{equation}
\label{trace}
\Delta(\l):=\text{tr}M(\l),
\end{equation}
which is clearly an invariant of (\ref{eq-lame-x}), i.e. does not depend on the choice of linearly
independent solutions.
From (\ref{eq-eigen1}) and (\ref{eq-eigen2}),  the Hill's discriminant is given by
\begin{equation}\label{eq-trace}
\Delta(\l)=e^{-2\pi is}+e^{2\pi is}=e^{\sum_{j=1}^{g}(\zeta(a_j)-\eta_1a_j)}
+{e^{-\sum_{j=1}^{g}(\zeta(a_j)-\eta_1a_j)}}.
\end{equation}
This entire function $\Delta(\l)$ encodes all information of the spectrum
$\sigma(L_\tau^g)$; see e.g. \cite{GW2} and references
therein. Indeed,
 Rofe-Beketov \cite{Rofe-Beketov} proved that  the spectrum $\sigma(L_\tau^g)$  can be described as:
\begin{equation*}
\sigma(L_\tau^g)=\Delta^{-1}([-2,2])=\{\l\in \mathbb{C}\,|\, -2\leq \Delta(\l)\leq2\}.\end{equation*}
This important fact play a key role in this paper.

Clearly $\l$ is a (anti)periodic eigenvalue if and only if $\Delta(\l)=\pm 2$. Define
\[
d(\l):=\text{ord}_{\l}(\Delta(\cdot)^{2}-4).
\]
Then it is well known (cf. \cite[Section 2.3]{Naimark}) that $d(\l)$ equals to \textit{the algebraic multiplicity of (anti)periodic
eigenvalues}. Let $c(\l,x,x_{0})$ and $s(\l,x,x_{0})$ be the principal fundamental
system of solutions of (\ref{eq-lame-x}) satisfying the initial values
\[
c(\l,x_{0},x_{0})=s^{\prime}(\l,x_{0},x_{0})=1,\ c^{\prime}(\l,x_{0}%
,x_{0})=s(\l,x_{0},x_{0})=0.
\]
Then we have
\[
\Delta(\l)=c(\l,x_{0}+1,x_{0})+s^{\prime}(\l,x_{0}+1,x_{0}).
\]
Define
\[
p(\l,x_{0}):=\text{ord}_{\l}s(\cdot,x_{0}+\omega,x_{0}),
\]%
\[
p_{i}(\l):=\min \{p(\l,x_{0}):x_{0}\in \mathbb{R}\}.
\]
It is known  that $p(\l,x_{0})$ is the algebraic
multiplicity of a Dirichlet eigenvalue on the interval $[x_0, x_0+\omega]$,
and $p_{i}(\l)$ denotes the immovable part of $p(\l,x_{0})$ (cf. \cite{GW}). It
was proved in \cite[Theorem 3.2]{GW} that $d(\l)-2p_{i}(\l)\geq0$.

Applying the general result \cite[Theorem 4.1]{GW} to $L_\tau^3$, we obtain
\begin{theorem}\label{thm-spec1}
\cite[Theorem 4.1]{GW}
Let $\tau \in \mathbb{H}$, the spectrum $\sigma(L_\tau^3)$ consists of finitely many
bounded simple analytic arcs $\sigma_{k}$, $1\leq k\leq \tilde{g}$ for some $\tilde
{g}\leq 3$ and one semi-infinite simple analytic arc $\sigma_{\infty}$ which tends to
$-\infty+\langle q\rangle$, with $\langle q\rangle=\int_{x_{0}}^{x_{0}+1}q(x)dx$, i.e.
\[
\sigma(L_\tau^3)=\sigma_{\infty}\bigcup \cup_{k=1}^{\tilde{g}}\sigma_{k}.
\]
Furthermore,
\begin{enumerate}
    \item The finite end points of such arcs are exactly zeros of $Q_3(\cdot;\tau)$ with odd order;
    \item There are exactly $d(\l)$'s semi-arcs of $\sigma(L_\tau^3)$ meeting at each zero $\l$ of $Q_3(\cdot;\tau)$  and \begin{equation}\label{degree-formula}
d(\l)=\text{ord}_{\l}Q_3(\cdot;\tau)+2p_i(\l).
\end{equation}
\end{enumerate}
\end{theorem}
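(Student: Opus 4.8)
The plan is to deduce Theorem~\ref{thm-spec1} from the general structure theorem \cite[Theorem~4.1]{GW}: I would check that $L_\tau^g$ satisfies its hypotheses, match the abstract spectral curve there with the concrete Lam\'e curve $Y_\tau^g$ assembled in Section~\ref{sec-elliptic}, and then transcribe the conclusions. Throughout, $q(x)=-g(g+1)\wp(x+z_0;\tau)$ is, by the choice of $z_0$, a nonconstant $1$-periodic complex-valued real-analytic function on $\R$, so the Floquet setup of Section~\ref{Floquet-theory} and the identity $\sigma(L_\tau^g)=\Delta^{-1}([-2,2])$ recalled there are at our disposal.

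First I would verify that $q$ is a Picard potential. The Lam\'e equation $\mathcal L_{\tau,\l}^g$ has regular singular points only at the lattice $\Lambda_\tau$, with indicial exponents $-g$ and $g+1$ differing by the odd integer $2g+1$; it is classical (Frobenius/indicial analysis at $\Lambda_\tau$, cf.\ \cite{Whittaker-Watson}) that no logarithmic term occurs there, so every solution of $\mathcal L_{\tau,\l}^g$ is meromorphic on $\C$ for every $\l$. Thus $q$ is a Picard potential, which is precisely the class to which \cite[Theorem~4.1]{GW} applies (Picard potentials being algebro-geometric by the Gesztesy--Weikard characterization). The spectral data are already in place in Section~\ref{sec-elliptic}: by Theorems~\ref{thm3} and~\ref{thm-elliptic}, together with \eqref{eq-hy} and \eqref{fun-l}, the branch polynomial is $Q_g(\cdot;\tau)$ of degree $2g+1$, the curve $Y_\tau^g\cong\{C^2=Q_g(\l;\tau)\}$ has genus $g$, and for a non-branch point $\a\in Y_\tau^g$ the Hermite--Halphen functions $y_{\pm\a}(x+z_0)$ furnish two linearly independent Floquet solutions of $L_\tau^g y=\l_\a y$ (Corollary~\ref{cor-indep}), with $\l_\a$ the hyperelliptic projection.

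With these identifications \cite[Theorem~4.1]{GW} applies with $Q_g$ playing the role of its branch polynomial and yields at once: $\sigma(L_\tau^g)$ is a finite union of regular simple analytic arcs; the finite endpoints of the arcs are exactly the branch points of $Y_\tau^g$ over $\C$, i.e.\ the odd-order zeros of $Q_g(\cdot;\tau)$; at each zero $\l$ of $Q_g$ exactly $d(\l)=\text{ord}_\l(\Delta^2-4)$ semi-arcs meet; and, separating the movable from the immovable part of the Dirichlet divisor, $d(\l)=\text{ord}_\l Q_g(\cdot;\tau)+2p_i(\l)$, which is \eqref{degree-formula}. The bound $\tilde g\le g$ and the presence of exactly one unbounded arc also come from \cite[Theorem~4.1]{GW}, the genus of $Y_\tau^g$ bounding the number of bounded arcs and the branch point of $Y_\tau^g$ over $\infty$ producing the single unbounded arc $\sigma_\infty$. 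It remains only to locate $\sigma_\infty$: $\Delta$ is entire of order $1/2$, and since $L_\tau^g=\frac{d^2}{dx^2}+q$ has leading term $+\frac{d^2}{dx^2}$ with $\sigma(d^2/dx^2)=(-\infty,0]$, the standard high-energy estimate gives $\Delta(\l)=2\cos\!\big(\sqrt{\langle q\rangle-\l}\,\big)\bigl(1+o(1)\bigr)$ as $|\l|\to\infty$ away from the ray $\langle q\rangle+(-\infty,0]$; hence $\{\,-2\le\Delta\le 2\,\}$ has exactly one unbounded component, asymptotic to that ray, i.e.\ $\sigma_\infty$ tends to $-\infty+\langle q\rangle$ (here $\langle q\rangle=g(g+1)\eta_1$).

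Since the deep inputs — the Picard/algebro-geometric theory and the arc-structure theorem \cite[Theorem~4.1]{GW} — may be quoted, the only genuine work is (i) the apparent-singularity/global-meromorphy property of the Lam\'e equation, which is classical, and (ii) the high-energy asymptotics of $\Delta$ that isolate the unique semi-infinite arc and place it at $-\infty+\langle q\rangle$; the count $\tilde g\le g$ is then bookkeeping with $\deg Q_g=2g+1$. I do not expect a real obstacle here; the genuine difficulties of the paper lie in the later Theorems~\ref{thm-inner}, \ref{thm-spec-poly-zeros} and~\ref{thm-main}.
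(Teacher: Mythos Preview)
The paper does not prove this statement at all: Theorem~\ref{thm-spec1} is simply quoted verbatim as \cite[Theorem~4.1]{GW} applied to the Lam\'e operator, with no argument given --- after stating it, the paper immediately moves on to recall further cited results (Theorems~\ref{thm-degree} and~\ref{thm-spec2}). So there is nothing to compare your proposal against; the paper treats this as a black box from the literature.

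Your sketch is a reasonable outline of how one would actually justify the application of \cite[Theorem~4.1]{GW} to $L_\tau^g$ (Picard property, identification of the spectral curve with $Y_\tau^g$, asymptotics of $\Delta$ locating $\sigma_\infty$), and nothing in it is wrong. But for the purposes of this paper no such justification is expected: the statement is a preliminary, and the authors simply cite it. If you want to match the paper, you should omit the proof entirely and treat Theorem~\ref{thm-spec1} as a quoted result.
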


Furthermore, we need the following conclusions about $\sigma(L_\tau^3)$.

\begin{theorem}\label{thm-spec2}
{Let $\tau=\frac{1}{2}+bi$ with $b>0$.
\begin{enumerate}
    \item\cite[Lemma 4.1]{CFL-ADV} The spectrum $\sigma(L_\tau^3)$
    is symmetric with respect to the real line $\R$. In particular, $\sigma_{\infty} \subseteq \R$.
    \item \cite[Theorem 2.2]{GW2} The complement $\mathbb{C}\setminus \sigma(L_\tau^3)$ is path-connected.
\end{enumerate}}\end{theorem}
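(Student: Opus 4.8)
\emph{Part 1 (symmetry of $\sigma(L_\tau^g)$).} The plan is to exploit the arithmetic of $\tau=\tfrac12+bi$. First I would note that $\overline{\tau}=\tfrac12-bi=1-\tau$, so that
$\mathbb{Z}+\mathbb{Z}\overline{\tau}=\mathbb{Z}+\mathbb{Z}(1-\tau)=\mathbb{Z}+\mathbb{Z}\tau=:\Lambda_\tau$;
in other words the lattice is invariant under complex conjugation. Reading this off the defining series of $\wp$ gives the reflection identity $\overline{\wp(z;\tau)}=\wp(\bar z;\tau)$. Since $x\in\R$, the conjugate of the Lam\'e potential $q_{z_0}(x)=-g(g+1)\wp(x+z_0;\tau)$ is $\overline{q_{z_0}(x)}=q_{\bar z_0}(x)$, i.e.\ the potential obtained by the shift $\bar z_0$. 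Conjugating the eigenvalue equation $L_{z_0}y=\l y$ then shows that the fundamental matrices obey $M_{\bar z_0}(\bar\l)=\overline{M_{z_0}(\l)}$, hence $\Delta_{\bar z_0}(\bar\l)=\overline{\Delta_{z_0}(\l)}$, and since $[-2,2]$ is conjugation-invariant we get $\l\in\sigma(L_{z_0})\iff\bar\l\in\sigma(L_{\bar z_0})$. Because $-g(g+1)\wp$ is a Picard potential, $\sigma(L_\tau^g)$ does not depend on the shift, so $\sigma(L_{\bar z_0})=\sigma(L_{z_0})$, and therefore $\l\in\sigma(L_\tau^g)\iff\bar\l\in\sigma(L_\tau^g)$, which is the asserted symmetry.

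\emph{$\sigma_\infty\subseteq\R$.} By Theorem \ref{thm-spec1} the spectrum has a \emph{unique} unbounded arc $\sigma_\infty$. Its conjugate $\overline{\sigma_\infty}$ is again an unbounded arc contained in $\sigma(L_\tau^g)=\overline{\sigma(L_\tau^g)}$, so uniqueness forces $\overline{\sigma_\infty}=\sigma_\infty$. Parametrizing $\sigma_\infty$ by $\gamma:[0,\infty)\to\mathbb{C}$ with the finite endpoint at $t=0$, conjugation induces a self-homeomorphism $\phi$ of $[0,\infty)$ with $\gamma\circ\phi=\overline{\gamma}$ and $\phi^2=\mathrm{id}$; any homeomorphism of $[0,\infty)$ fixes the boundary point $0$ and is increasing, so $\phi=\mathrm{id}$, which forces $\gamma(t)\in\R$ for all $t$. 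Hence $\sigma_\infty\subseteq\R$ and its asymptote $-\infty+\langle q\rangle$ is real.

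\emph{Part 2 (path-connectedness of $\mathbb{C}\setminus\sigma$).} Here I would use the Floquet exponent. Writing the Floquet multipliers as the roots $\rho_\pm$ of $\rho^2-\Delta(\l)\rho+1=0$ (so $\rho_+\rho_-=1$), set $\beta(\l):=\log\max(|\rho_+|,|\rho_-|)\ge 0$. Then $\beta$ is continuous on $\mathbb{C}$, vanishes exactly on $\sigma(L_\tau^g)=\{\l:\Delta(\l)\in[-2,2]\}$, is positive and (branch-independently) harmonic on the complement, and $\beta(\l)\to+\infty$ as $|\l|\to\infty$ by the high-energy asymptotics of the Hill equation. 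If $U$ were a bounded component of $\mathbb{C}\setminus\sigma(L_\tau^g)$, then $\beta$ would be harmonic in $U$, continuous on $\overline{U}$, positive in $U$, and zero on $\partial U\subseteq\sigma(L_\tau^g)$; the maximum principle gives $\max_{\overline U}\beta=\max_{\partial U}\beta=0$, contradicting $\beta>0$ in $U$. Thus there are no bounded components. Since $\sigma(L_\tau^g)$ is a finite union of analytic arcs whose only unbounded piece is the single arc $\sigma_\infty$, and a slit does not disconnect a neighbourhood of $\infty$, the complement has exactly one component; being open and connected in $\mathbb{C}$, it is path-connected.

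\emph{Main obstacle.} Granting the two structural inputs---conjugation-invariance of $\Lambda_\tau$ and shift-independence of the spectrum---Part 1 is short, and the step $\sigma_\infty\subseteq\R$ is forced purely by uniqueness of the unbounded arc. The substantive work lies in Part 2: one must verify that the Floquet exponent $\beta$ really enjoys all the stated properties, namely continuity up to the spectrum, harmonicity (including the branch/monodromy issue for $\sqrt{\Delta^2-4}$ across the finitely many zeros of $\Delta^2-4$), and the growth $\beta\to\infty$ at infinity. I expect the main difficulty to be controlling $\beta$ near $\sigma(L_\tau^g)$ and at the (anti)periodic eigenvalues, and making precise---from the arc structure of Theorem \ref{thm-spec1}---that no second unbounded component can occur.
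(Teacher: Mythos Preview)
The paper does not prove this statement at all: Theorem \ref{thm-spec2} is stated as a pair of citations, Part 1 to \cite[Lemma 4.1]{CFL-ADV} and Part 2 to \cite[Theorem 2.2]{GW2}, and the paper uses it as a black box. So there is no ``paper's own proof'' to compare against here.

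That said, your arguments are essentially sound and are in the spirit of the cited references. For Part 1, the key input---that $\overline{\tau}=1-\tau$ makes the lattice conjugation-invariant, hence $\overline{\wp(z;\tau)}=\wp(\bar z;\tau)$, and then the shift-independence of the spectrum finishes---is exactly the mechanism behind \cite[Lemma 4.1]{CFL-ADV}. Your deduction of $\sigma_\infty\subseteq\R$ from uniqueness of the unbounded arc is clean; the involution argument on the parametrization is correct (an increasing involution of $[0,\infty)$ is the identity). For Part 2, the Lyapunov (Floquet) exponent $\beta(\l)=\log\max(|\rho_+|,|\rho_-|)$ is indeed continuous, vanishes exactly on $\sigma$, and is harmonic on the complement: since $|\rho_+\rho_-|=1$ and $|\rho_\pm|\neq 1$ off $\sigma$, the larger root is globally single-valued and holomorphic there, so no monodromy issue actually arises (you flagged this as the main obstacle, but it dissolves). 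The maximum-principle step excluding bounded components is correct; note you do not need $\beta\to\infty$ at infinity for that step, only $\beta>0$ on $U$ and $\beta=0$ on $\partial U$. Your final topological remark that a single semi-infinite arc cannot separate a neighbourhood of $\infty$ is enough to conclude connectedness.
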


Note that  the multiplicity of any zero of $Q_3(\l; \tau)$ is at most $2$ by Theorem \ref{thm-spec-poly-zeros}, we have  the following results about the spectrum of $L_\tau^3$.

\begin{theorem}\label{thm-degree}\cite[Theorem 1.3]{CL-JDG}
{Let $\tau\in\mathbb{H}$,
\begin{enumerate}
   \item for any finite endpoint $\l_{\a}$ of $\sigma(L_\tau^3)$,
\[d(\l_{\a})=\operatorname{ord}_{\l_{\a}}(\Delta(\l)^2-4)\geq 3\quad\text{if and only if}\quad \chi_{\a}= 0,\]
where $\chi_{\a}$ is defined in (\ref{eq-chi}).
   \item $\sigma(L_\tau^3)$ has at most one endpoint with $d(\l)\geq 3$.
\end{enumerate}
}\end{theorem}

By  Theorem \ref{thm-spec2} (1) and Theorem \ref{thm-degree} (2),  we obtain the following corollary directly.

\begin{corollary}\label{cor-degree-1}
{Let $\tau=\frac{1}{2}+bi$ with $b>0$,  
any simple zero $\l\in\mathbb{C}\setminus\mathbb{R}$ of $Q_3(\l; \tau)$ satisfies $d(\l)=1$, i.e. such $\l$ can not be an intersection point of spectral arcs. 
}\end{corollary}

\subsection{Mean field equation}
\label{sec-mfe}
The purpose of this section is to study the relation between the spectrum $\sigma(L_\tau^g)$ and the number of even axisymmetric solutions of the mean field equation
\begin{equation}\label{eq-mfe-1}
\triangle u+ e^u=8g\pi \delta_0 \quad\text{on }\; T_{\tau}.
\end{equation}
First of all, we recall the connection between (\ref{eq-mfe-1}) and the generalized Lam\'{e} equation $\mathcal{L}(g,\tau, \l)$ which was studied in \cite{CLW}.
\begin{theorem} \cite[Theorem 3.1]{CKL2} \label{thm-mfe}
The mean field equation \eqref{eq-mfe-1} has an even solution if and only if there exists $\l\in \C$ such that the monodromy of $\mathcal{L}(g, \tau, \l)$ is unitary.
Furthermore, the number of even solutions equals the number of those $\l$'s such that the monodromy of $\mathcal{L}(g,\tau,\l)$ is unitary.
\end{theorem}

For any $\l\in \C$, there exists $[\a]\in Y_\tau^g$ such that $\l=\l_\a$ by the covering map \eqref{fun-l}. 
From \eqref{eq-trace}, we have
$$\Delta(\l_\a)=e^{2\pi is}+e^{-2\pi is},\qquad 2\pi is=\sum_{i=1}^g\left(\eta_1 a_i -\zeta(a_i)\right)\in \mathbb{C},$$
then $\sigma(L_b^g)=\Delta^{-1}([-2,2])=\{\lambda_\a \,|\, s\in\mathbb{R}\} $.
Since the Lam\'e potential is doubly periodic, we can also consider its spectrum along the $\tau$ direction.
From \eqref{eq-sr} and \eqref{eq-eigen}, it is clear to see that the Hill's discriminant along the $\tau$ direction denoted by $\Delta_\tau$ is as follows:
$$\Delta_\tau(\l_\a)=e^{2\pi ir}+e^{-2\pi ir},\qquad 2\pi ir=\sum_{i=1}^g\left(\tau\zeta(a_i)-\eta_2 a_i \right)\in \mathbb{C},$$
then the spectrum along the $\tau$ direction is the following: $$\sigma_\tau(L_b^g)=\Delta_\tau^{-1}([-2,2])=\{\lambda_\a \,|\, r\in\mathbb{R}\} .$$

Let $\tau=\frac{1}{2}+ib$ with $b>0$. We will see the spectrum along the imaginary axis plays the same role as $\sigma_\tau(L_b^g)$. Note that $2\tau-1\in i\mathbb{R}$ and
$$y_{\pm \a}(z+2\tau-1)=e^{\pm 2\pi i(2r+s)}y_{\pm \a}(z),$$
the Hill's discriminant along the imaginary direction is given by
\begin{equation*}
\Delta_i(\l_\a):=e^{2\pi i(2r+s)}+e^{-2\pi i(2r+s)},
\end{equation*}
and then
\begin{equation}\label{eq-sigma2}
\sigma_i(L_b^g):=\Delta_i^{-1}([-2,2])=\{\l_\a\,|\,2r+s\in \mathbb{R}\}.
\end{equation}
Clearly, $\sigma(L_b^g)\cap \sigma_i(L_b^g)=\sigma(L_b^g)\cap \sigma_\tau(L_b^g)$. In particular,
the spectrum  ${\sigma}_i(L_b^g)$ can be obtained from  $\sigma(L_{\frac{1}{4b}}^g)$ by a dilation.
\begin{lemma}\label{lem-dualspec}
Let $\tau=\frac{1}{2}+bi$ with $b>0$. We have $${\sigma}_i(L_b^g)=\tfrac{1}{-4b^2}\sigma(L_{\frac{1}{4b}}^g)$$ and the endpoints of ${\sigma}_i(L_b^g)$ are exactly the
endpoints of $\sigma(L_b^g)$.
\end{lemma}
\begin{proof}
Note that
\[\tfrac{\tau-1}{2\tau-1}=\tfrac{1}{2}+i\tfrac{1}{4b}\quad \text{for }\;\tau=\tfrac12+ib.\]
Since $y_{\pm\a}(z)$ are solutions of $\mathcal{L}(g, \tau, \l_\a)$, then $\widehat{y}_{\pm\a}(z):=y_{\pm\a}((2\tau-1)z)$ satisfies
\begin{align*}
\widehat{y}_{\pm\a}''(z)&=(2\tau-1)^2\left(g(g+1)\wp((2\tau-1)z;\tau)+\l_\a\right)\widehat{y}_{\pm\a}(z)\\
&=\left(g(g+1)\wp(z;\tfrac{\tau-1}{2\tau-1})+(2\tau-1)^2\l_\a\right)\widehat{y}_{\pm\a}(z)\\
&=\left(g(g+1)\wp(z;\tfrac12+i\tfrac1{4b})-4b^2\l_\a\right)\widehat{y}_{\pm\a}(z),
\end{align*}
and
\[\widehat{y}_{\pm\a}(z+ 1)=y_{\pm\a}\left((2\tau-1)z+ (2\tau-1)\right)=e^{\pm 2\pi i (2r+s)}\widehat{y}_{\pm\a}(z).\]
Therefore, we have
\begin{equation}\label{eq-lame101}
\Delta(-4b^2\l_\a;\tfrac{1}{4b})=e^{2\pi i (2r+s)}+e^{-2\pi i (2r+s)}={\Delta}_i(\l_\a;b).
\end{equation}
Consequently, we conclude from (\ref{eq-sigma2}) that
\begin{align}\label{eq-dualspec}
{\sigma}_i(L_b^g)&=\{\l\in \C\,|\, -2\leq \Delta(-4b^2\l;\tfrac{1}{4b})\leq 2 \}\nonumber\\
&=\{\l\in \C\,|\, -4b^2\l\in\sigma(L_{\frac{1}{4b}})\}=\tfrac{1}{-4b^2}\sigma(L_{\frac{1}{4b}}).
\end{align}
Theorem \ref{thm-Q} tells us that $Q_g(\l;b)\in \mathbb{Q}[g_2(b), g_3(b)]$ is homogeneous of weight $2g+1$ in $\l, g_2, g_3$ when $\l, g_2, g_3$ are given weights $1, 2, 3$ respectively.
By the modular properties (cf. \cite{CL-CMP}), 
we have
\begin{align*}
    g_2(\tfrac{1}{4b})={16b^4}g_2(b),\qquad
 g_3(\tfrac{1}{4b})=   {-64b^6}g_3(b),
\end{align*} then
$$Q_g(-4b^2\l; \tfrac{1}{4b^2})=(-4b^2)^{2g+1}Q_g(\l; b).$$
Denote by $$Z\left(Q_g(\l;b)\right):=\left\{\l\in \C\,\left\vert\, Q_b(\l)=0 \right.\right\},$$
combining with \eqref{eq-dualspec}, we have  \begin{equation*}
    Z\left(Q_g(\l; b)\right)=Z\left(Q_g(-4b^2\l;  \tfrac{1}{4b^2})\right)=\tfrac{1}{-4b^2}Z\left(Q_g(\l;  \tfrac{1}{4b^2})\right)\subset \sigma_i(L_b^g),
\end{equation*}
thus the endpoints of $\sigma_i(L_b^g)$ are the same as the endpoints of $\sigma(L_b^g)$.
\end{proof}

By the proof of Lemma \ref{lem-dualspec}, we
 $$Z\left(Q_g(\l; b)\right)\subset \sigma(L_b^g)\cap {\sigma}_i(L_b^g),$$
which contains all finite endpoints of both $\sigma(L_b^g)$ and ${\sigma}_i(L_b^g)$.
Note that $Q_g(\l_\a;b)\neq 0$ implies that $(s,r)\notin \frac12\mathbb{Z}^2$, we have
\begin{equation*}
\begin{aligned}
  \Xi_b:=&\left(\sigma(L_b^g)\cap {\sigma}_i(L^g_b)\right)\setminus Z\left(Q_g(\l;b)\right)\\
  =&\left\{\l_\a\,|\, (s,r)\in \mathbb{R}^2\setminus \tfrac{1}{2}\mathbb{Z}^2\right\}.
\end{aligned}
\end{equation*}
The following theorem establishes the precise connection between even solutions of the mean field equation and the spectrum.
\begin{theorem}\label{thm-mfe-1}
Let $\tau=\frac{1}{2}+bi$ with $b>0$.  The number of even solutions of the mean field equation
\eqref{eq-mfe-1} equals to $\# \Xi_b$. Furthermore, the number of even axisymmetric solutions equals to $\# (\Xi_b\cap\mathbb{R})$.
\end{theorem}

\begin{proof}
Note that the monodromy of $\mathcal{L}(g, \tau, {\l_\a})$ is completely reducible if and only if $Q_g(\l_\a;b)\neq 0$ (cf.  \cite[Theorem 2.7]{CKL1}), then the monodromy of $\mathcal{L}(g, \tau, {\l_\a})$ is unitary if and only if $\l_\a\in\Xi_b$. Together with Theorem \ref{thm-mfe}, we conclude that the number of even solutions of (\ref{eq-mfe-1}) equals to $\# \Xi_b$.

Let $u(z)=u(x,y)$ (here we use complex variable $z=x+iy$) be an even solution of \eqref{eq-mfe-1}, then there exists $\l_\a\in\Xi_b$ (cf. \cite{CLW} ) such that
\[(u_{zz}-\tfrac{1}{2}u_z^2)(z)=-2\left(g(g+1)\wp(z;\tau)+\l_\a\right).\]
Clearly $\tilde{u}(z)=\tilde{u}(x,y):=u(x,-y)=u(\overline{z})$ is also an even solution of (\ref{eq-mfe-1}) and satisfies (note that $u(z)$ is real-valued as a solution of (\ref{eq-mfe-1}))
\begin{align*}
(\tilde{u}_{zz}-\tfrac{1}{2}\tilde{u}_z^2)(z)&
=\overline{(u_{zz}-\tfrac{1}{2}u_z^2)(\overline{z})}\\
&=\overline{-2\left(g(g+1)\wp(\overline{z};\tau)+\l_\a\right)}\\
&=-2\left(g(g+1)\wp(z;\tau)+\overline{\l_\a}\right),
\end{align*}
i.e. $\overline{\l_\a}\in\Xi_b$ if $\l_\a\in\Xi_b$.
From here and the fact stated in Theorem \ref{thm-mfe} that there is a one-to-one correspondence between $\l\in\Xi_b$ and even solutions of (\ref{eq-mfe-1}), we conclude that $\l_\a=\overline{\l_\a}$ if and only if $u(z)=\tilde{u}(z)$, i.e. $u(z)=u(\overline{z})$ is axisymmetric. Therefore, the number of even axisymmetric solutions equals to $\# (\Xi_b\cap\mathbb{R})$.
\end{proof}

By \cite[Lemma 2.3]{F-JST}, the spectrum $\sigma(L_b^g)$ is a horizontal translation of the spectrum of the following Darboux-Treibich-Verdier operator
\begin{align*}
  \widehat{L}_b^g:=\frac{d^2}{dx^2}-g(g+1)\left(\wp(x+z_0;2bi)+\wp(x+\tfrac{1+\tilde{\tau}}{2}+z_0;2bi)\right),
\end{align*}
with  $z_0\in\mathbb{C}$ is chosen such that the potential has no singularities on $\mathbb{R}$. Note that the mean field equation \eqref{eq-mfe-1} has no solutions for $\tau=\tfrac{1}{2}+\tfrac{1}{2}i$ (cf. \cite{CL-AJM}), combining with Theorem \ref{thm-mfe-1} and \cite[Theorem 1.7]{F-JST}, we have the following corollary.

\begin{corollary}\label{cor-even}
Let $\tau=\frac{1}{2}+bi$ with $b>0$. There exists two real numbers $k_1, k_2$ with $0< k_1\leq  k_2<\frac{1}{2}$ such that
\begin{equation}\label{eq-even}
 \# (\Xi_b\cap\mathbb{R})=\left\{   \begin{aligned}
   & 2\qquad \text{ if}\quad b\in (0, k_1)\cup (\tfrac{1}{4k_1}, +\infty)\\
   & 1\qquad \text{ if}\quad b\in [k_1,k_2)\cup (\tfrac{1}{4k_2}, \tfrac{1}{4k_1}]\\
      & 0\qquad \text{ if}\quad b\in  [k_2, \tfrac{1}{4k_2}].
    \end{aligned}\right.
\end{equation}
\end{corollary}

In fact, our calculation for the spectrum supplies another way to compute $k_1$ and $k_2$. In particular, $k_2=b_g$ (See Section \ref{sec-proof}).

\section{Inner intersection points }\label{sec-inner}

In this and  the following sections, we study the spectrum of the $g=3$ Lam\'{e} operator:
\begin{equation*}
L_\tau^3=\frac{d^2}{dx^2}-12\wp(x+z_0;\tau),\quad x\in\mathbb{R}.\end{equation*}
We will prove Theorem \ref{thm-inner} recalled here.
\begin{theorem}[=Theorem \ref{thm-inner}] Let $\tau\in \mathbb{H}$ and $\l_0\in \sigma(L_\tau^3)$ with $Q_3(\l_0; \tau)\neq 0$. Then $\l_0$ is an inner intersection point if and only if  
$\l_0$ satisfies the following cubic equation
\begin{equation*}
    f(\l):=\frac{4}{15} \l^3+\frac{8}{5}\eta_1 \l^2-3g_2 \l+9g_3-6\eta_1 g_2=0.
\end{equation*}
\end{theorem}

\begin{proof}
Let $\l_0 \in \sigma(L_\tau^3)$  with  $Q_3(\l_0; \tau)\neq 0$.
By (\ref{eq-hy}) there is a small neighborhood $U\subset \mathbb{C}$ of $\l_0$ such that $Q_3(\l; \tau)\neq 0$ for $\l\in U$ and $\l\in U$ can be a local coordinate for the hyperelliptic curve $Y_\tau^3$, namely $a_1=a_1(\l), a_2=a_2(\l)$ and $a_3=a_3(\l)$ are holomorphic for $\l\in U$. For all $\l\in U$,  (\ref{fun-l}) tells us that
$$\l=\l_{\a}=5\left(\wp(a_1)+\wp(a_2)+\wp(a_3)\right)$$
and then
$$\wp'(a_1)a_1'(\l)+\wp'(a_2)a_2'(\l)+\wp'(a_3)a_3'(\l)=\frac{1}{5} \quad \text{for}\,\, \l\in U$$ and so \begin{equation}\label{anot0}(a_1'(\l_0), a_2'(\l_0), a_2'(\l_0))\neq (0,0,0).
\end{equation}

Next, note that
$Q_3(\l; \tau)\neq 0$ for $\l\in U$ implies
\begin{equation*}
    \left\{a_1(\l),a_2(\l), a_3(\l)\right\}\cap \left\{-a_1(\l),-a_2(\l), -a_3(\l)\right\}=\emptyset \quad \text{for}\,\l\in U,
\end{equation*}
i.e., $\a(\l)=\left\{a_1(\l),a_2(\l), a_3(\l)\right\}$ is not a branch point of $Y_\tau^g$ for all $\l\in U$.
Hence
\begin{equation}\label{neq-wp}
    \wp(a_i(\l))\neq \wp(a_j(\l)) \qquad \text{for all} \quad \l\in U, \quad 1\leq i<j\leq 3,
\end{equation}
and  (\ref{eq-nonbranch}) holds for $\l\in U$, i.e.,
\begin{equation}\label{eq-nonbranch-3}
\begin{aligned}
\wp^{\prime}(a_{1})+\wp'(a_2)+\wp'(a_3)=0,\\
\wp^{\prime}(a_{1})\wp(a_{1})+\wp^{\prime}(a_{2})\wp(a_{2})+\wp^{\prime}(a_{3})\wp(a_{3})=0.
\end{aligned}
\end{equation}
Taking derivative with respect to $\l$ in (\ref{eq-nonbranch-3}) and evaluating at $\l_0$, we obtain from $(\wp')^2=4\wp^3-g_2\wp-g_3$ and $\wp''=6\wp^2-\frac{g_2}{2}$ that
\begin{align}
\sum\limits_{i=1}^3(6\wp_i^2-\tfrac{g_2}{2})a_i'(\l_0)=0, \label{eq-diff1}\\
\sum\limits_{i=1}^3(10\wp_i^3-\frac{3}{2}g_2\wp_i-g_3)a_i'(\l_0)=0, \label{eq-diff2}
\end{align}
where $\wp_i:=\wp\left(a_i(\l_0)\right)$ for $i=1,2,3$.

By $(\wp')^2=4\wp^3-g_2\wp-g_3$ and
 (\ref{eq-nonbranch-3}), we easily obtain
\begin{equation}\label{three-eq}
    \frac{4\wp_1^3-g_2\wp_1-g_3}{(\wp_2-\wp_3)^2}
    =\frac{4\wp_2^3-g_2\wp_2-g_3}{(\wp_1-\wp_3)^2}
    =\frac{4\wp_3^3-g_2\wp_3-g_3}{(\wp_1-\wp_2)^2}=:\mho,
\end{equation}
which is equivalent to
\begin{equation}\label{eq-system1}
    \left\{\begin{aligned}
    4\wp_1^3-g_2\wp_1-g_3=\mho(\wp_2-\wp_3)^2
    ,\\
    4\wp_2^3-g_2\wp_2-g_3=\mho(\wp_1-\wp_3)^2
    ,\\
    4\wp_3^3-g_2\wp_3-g_3=\mho(\wp_1-\wp_2)^2
    .
    \end{aligned}\right.
\end{equation}
Denote by
\begin{equation*}
       \left\{ \begin{aligned}
            s_1&=\wp_1+\wp_2+\wp_3,\\
            s_2&=\wp_1\wp_2+\wp_1\wp_3+\wp_2\wp_3,\\
            s_3&=\wp_1\wp_2\wp_3,
        \end{aligned}\right.
    \end{equation*}
    we have
    \begin{equation}\label{eq-sym1}
        (x-\wp_1)(x-\wp_2)(x-\wp_3)=x^3-s_1x^2+s_2x-s_3,
    \end{equation}
    then (\ref{eq-system1}) is equivalent to
\begin{numcases}{}
    4s_1\wp_1^2-(4s_2+g_2)\wp_1+4s_3-g_3=\mho(\wp_2-\wp_3)^2
   \label{eq-1},\\
    4s_1\wp_2^2-(4s_2+g_2)\wp_2+4s_3-g_3=\mho(\wp_1-\wp_3)^2
     \label{eq-2},\\
    4s_1\wp_3^2-(4s_2+g_2)\wp_3+4s_3-g_3=\mho(\wp_1-\wp_2)^2
    \label{eq-3}.
    \end{numcases}
 First,  (\ref{eq-1})+(\ref{eq-2})+(\ref{eq-3}) lead to
\begin{equation} \label{eq-4}
    \mho=\frac{4(s_1^3-3s_1s_2+3s_3)-g_2s_1-3g_3}{2s_1^2-6s_2}. 
\end{equation}
Note that $\wp_i\neq \wp_j$ for $i\neq j$, then {(\ref{eq-1})}-{(\ref{eq-2})},
 {(\ref{eq-1})}-{(\ref{eq-3})} and {(\ref{eq-2})}-{(\ref{eq-3})} yield
\begin{numcases}{}
4s_2+g_2-4s_1(\wp_1+\wp_2)=\mho(\wp_1+\wp_2-2\wp_3)
  \label{eq-5},\\
   4s_2+g_2-4s_1(\wp_1+\wp_3)=\mho(\wp_1+\wp_3-2\wp_2)
 \label{eq-6},\\
     4s_2+g_2-4s_1(\wp_2+\wp_3)=\mho(\wp_2+\wp_3-2\wp_1)
    \label{eq-7}.
    \end{numcases}
Next,
  (\ref{eq-5})+(\ref{eq-6})+(\ref{eq-7}) gives us
\begin{equation}\label{eq-8}
     s_2=\frac{2}{3}s_1^2-\frac{1}{4} g_2,
\end{equation}
 and (\ref{eq-5})-(\ref{eq-6}) gives us
\begin{equation}\label{mho}
     \mho=-\frac{4}{3}s_1.
\end{equation}
Combine (\ref{eq-4}), (\ref{eq-8}) and (\ref{mho}), we obtain that
\begin{equation}\label{eq-10}
    s_3=\frac{5}{9}s_1^3-\frac{1}{3}g_2s_1+\frac{1}{4}g_3.
\end{equation}

On the other hand, for any $\l\in U$,  denote by $A(\l):=\sum_{j=1}^{3}(\zeta(a_j)-\eta_1a_j)$. 
Since $\a(\l)\cap -\a(\l)=\emptyset$, by (\ref{eq-trace-nonbranch}), we have that for $\l\in U$,
\begin{equation}\label{eq-diffdelta}
\begin{aligned}
\Delta(\l)&=e^A+e^{-A},\\
       \Delta'(\l)&=(e^A-e^{-A})A',\\
       \Delta''(\l)&=(e^A-e^{-A})A''+\Delta (A')^2,\\
       \Delta'''(\l)&=(e^A-e^{-A})\left(A'''+(A')^3\right)+3\Delta A'A''.
\end{aligned}
\end{equation}


\textbf{Sufficiency.}
Let $\l_0 \in \sigma(L_\tau^3)$ with $Q_3(\l_0; \tau)\neq 0$ be an inner intersection point, then $\l_0$ is met by $2k\geq 4 (k\in \Z)$ semi-arcs of the spectrum.

Consider the local behavior of the spectrum at  $\l_0\in \sigma(L_\tau^3)$:
\begin{equation}\label{eq-2lame-2}\Delta(\l)-\Delta(\l_0)=c(\l-\l_0)^{k}(1+O(|\l-\l_0|)),\quad k\geq 1,\;\, c\neq 0.\end{equation}
If $\Delta(\l_0)\in (-2,2)$, it follows from (\ref{eq-2lame-2}) and $\sigma(L_\tau^3)=\{\l|-2\leq\Delta(\l)\leq 2\}$ that there are precisely $2k$ semi-arcs of $\sigma(L_\tau^3)$ meeting at $\l_0$.
If $\Delta(\l_0)=\pm 2$, then there are precisely $k$ semi-arcs of $\sigma(L_\tau^3)$ meeting at $\l_0$.

If $\Delta(\l_0)=\pm 2$, then our assumption implies $k\geq 4$, i.e. $\Delta'(\l_0)=\Delta''(\l_0)=\Delta'''(\l_0)=0$. Since $\Delta(\l_0)=\pm 2$ implies $e^A=\pm 1$ at $\l_0$,  we obtain $A'(\l_0)=0$.

If $\Delta(\l_0)\in (-2,2)$, then our assumption implies $2k\geq 4$, i.e. $k\geq 2$ and so $\Delta'(\l_0)=0$.
Since $\Delta(\l_0)\neq \pm 2$ implies $e^A\neq \pm 1$ at $\l_0$, again we obtain $A'(\l_0)=0$.

Therefore, we always have $A'(\l_0)=0$, i.e.,
\begin{equation}\label{eq-diff3}
 \left(\wp_1+\eta_1\right)a_1'(\l_0)+\left(\wp_2+\eta_1\right)a_2'(\l_0)+\left(\wp_3+\eta_1\right)a_3'(\l_0)=0.
\end{equation}

Noting from (\ref{anot0}), we conclude from (\ref{eq-diff1}, \ref{eq-diff2}, \ref{eq-diff3}) that the determinant of the matrix
\begin{equation*}\label{eq-matrix}
\begin{aligned}
\Omega:=\begin{pmatrix}
      \wp_1+\eta_1 & \wp_2+\eta_1 &\wp_3+\eta_1 \\
      6\wp_1^2-\tfrac{g_2}{2} & 6\wp_2^2-\tfrac{g_2}{2} & 6\wp_3^2-\tfrac{g_2}{2} \\
      10\wp_1^3-\frac{3}{2}g_2\wp_1-g_3&
       10\wp_2^3-\frac{3}{2}g_2\wp_2-g_3&
        10\wp_3^3-\frac{3}{2}g_2\wp_3-g_3
    \end{pmatrix}
    \end{aligned}\end{equation*}
    vanishes, i.e.,
    \begin{equation*}
        (\wp_2-\wp_1)(\wp_3-\wp_1)(\wp_3-\wp_2)
(60s_3+60\eta_1s_2+5g_2s_1-6g_3-9g_2\eta_1)=0.
    \end{equation*}
By (\ref{neq-wp}), we obtain that
\begin{equation}\label{eq-inner-s}
60s_3+60\eta_1s_2+5g_2s_1-6g_3-9g_2\eta_1=0.
\end{equation}

Plug (\ref{eq-8}), (\ref{eq-10}) and \begin{equation}\label{eq-11}\l_0=5(\wp_1+\wp_2+\wp_3)=5s_1\end{equation} into (\ref{eq-inner-s}), we finally obtain that
    $$\frac{4}{15} \l_0^3+\frac{8}{5}\eta_1 \l_0^2-3g_2 \l_0+9g_3-6\eta_1 g_2=0.$$

{\bf Necessity.} Suppose $\l_0\in\sigma(L_\tau^3)$ 
satisfies  $Q_3(\l_0; \tau)\neq 0$ and
$$\frac{4}{15} \l_0^3+\frac{8}{5}\eta_1 \l_0^2-3g_2 \l_0+9g_3-6\eta_1 g_2=0.$$
 This, together with (\ref{eq-8}), (\ref{eq-10}) and (\ref{eq-11}) implies $\det \Omega=0$. Since $\wp_i\neq \wp_j$ for $i\neq j$, the second row of $\Omega$ is nonzero. Suppose that the last two rows of $\Omega$ are linearly dependent, there is $c\in \C$ such that
 \begin{equation*}
     10\wp_i^3-\frac{3}{2}g_2\wp_i-g_3=c(6\wp_i^2-\tfrac{g_2}{2}), \qquad i=1,2,3,
 \end{equation*}
 then
  \begin{equation}\label{eq-sym2}
     (x-\wp_1)(x-\wp_2)(x-\wp_3)=x^3-\frac{3}{5}c x^2-\frac{3}{20}g_2x+\frac{g_2}{20}c-\frac{1}{10}g_3.
 \end{equation}
 Compare (\ref{eq-sym1}) with (\ref{eq-sym2}), we have
 \begin{equation}
     \begin{aligned}
         s_1&=\frac{3}{5}c,\\
         s_2&=-\frac{3}{20}g_2,\\
         s_3&=\frac{1}{10}g_3-\frac{c}{20}g_2.
     \end{aligned}
 \end{equation}
  Combine these with (\ref{eq-8}) and (\ref{eq-10}), we obtain that
 \begin{equation*}
   27g_3^2=5g_2^3   \qquad  \text{and} \qquad \l_0=5s_1=\frac{9g_3}{2g_2}.
 \end{equation*}
 Therefore, if $\tau \in \mathbb{H}\setminus \left\{\tau \in \mathbb{H} \,|\, 27g_3^2(\tau)=5g_2^3(\tau)\right\}$, the last two rows of $\Omega$ are linearly independent, thus the first row can be linearly spanned by the last two rows. Hence (\ref{eq-diff1}) and (\ref{eq-diff2}) yields (\ref{eq-diff3}). Finally, by the continuity of the left hand side in (\ref{eq-diff3}) with respect to $\tau$, we obtain (\ref{eq-diff3}) holds for all $\tau \in \mathbb{H}$.

If  $\Delta(\l_0)\in (-2,2)$, then we see from (\ref{eq-diff3}) and (\ref{eq-diffdelta}) that $\Delta'(\l_0)=0$, i.e. $k\geq 2$ in (\ref{eq-2lame-2}) and so
there are $2k\geq4$ semi-arcs of $\sigma(L_\tau^3)$ meeting at this $\l_0$. If $\Delta(\l_0)=\pm 2$, then $e^{A}=\pm 1$ at $\l_0$. From here and (\ref{eq-diff3}, \ref{eq-diffdelta}), we see that $\Delta'(\l_0)=\Delta''(\l_0)=\Delta'''(\l_0)=0$. This means $k\geq 4$ in (\ref{eq-2lame-2}) and so
there are $k\geq4$ semi-arcs of $\sigma(L_\tau^3)$ meeting at this $\l_0$. Therefore, $\l_0$ is an inner intersection point.
\end{proof}

\section{Zeros of the spectral polynomial}\label{sec-specpoly}

In this and the following sections, we always assume $\tau=\frac{1}{2}+bi$ with $b>0$. In order to emphasize $\tau=\frac{1}{2}+bi$, we will use $b$ instead of $\tau$ in notations. Sometimes,
we omit the notation $\tau$ freely. 
We will first recall some basic properties for the quantities  $e_1, e_2, e_3, g_2, g_3$ and $ \eta_1$ associated with the Weierstrass elliptic function $\wp(z; \tau)$.



First of all, the second equality in (\ref{eq-wp}) gives us
\begin{align}
&e_1+e_2+e_3=0,\label{sum-e}\\
&g_2=2\left(e_1^2+e_2^2+e_3^2\right),\label{g2-sum-e}\\
&g_3=4e_1e_2e_3\label{g3-times-e}.
\end{align}
Note that $e_1, 
\,\,\overline{e_2}=e_3 \not\in \R$ and (\ref{sum-e}), in what follows, we set
\begin{equation}\label{xy}
e_1=2x, \quad e_2=-x+iy, \quad e_3=-x-iy \quad
\text{with} \,\, x,y\in \R \,\, \text{and}\,\, y\neq 0,\end{equation}
and then
\begin{equation}\label{g2g3xy}
\begin{aligned}
    g_2=&4(3x^2-y^2),\\
    g_3=&8x(x^2+y^2)=4e_1^3-e_1g_2.
    \end{aligned}
\end{equation}
Since 
$e_1\neq e_2\neq e_3\neq e_1$,
it is easy to see that
\begin{equation}\label{neq-g2ek}
g_2-3e_k^2=(e_i-e_j)^2\neq 0,
\qquad \text{for}\quad \{i, j, k\}=\{1,2,3\}.
\end{equation}
In particular,
\begin{equation}\label{neq-g2e1}
g_2-3e_1^2=(e_2-e_3)^2=-4y^2<0, \qquad\text{i.e.,}\quad g_2< 3e_1^2.
\end{equation}
The derivatives of $e_1, g_2$ and $\eta_1$ with respect to $b$ are as follows:
\begin{equation}\label{eq-diff}
\begin{aligned}
e_1'(b)
&=\tfrac{1}{\pi}\left( e_{1}^{2}- \eta
_{1}e_{1}- \tfrac{1}{6}g_{2}\right),
\qquad\text{(see  \cite[(2.15)]{CL-PAMS})},\\
g_2'(b)
&=\tfrac{1}{\pi}(3g_3-2\eta_1g_2)=\tfrac{1}{\pi}(12e_1^3-3e_1g_2-2\eta_1g_2), \quad\text{(see  \cite{CL-E4}),}\\
\eta_1'(b)
&=\tfrac{1}{24\pi}(g_{2}-12\eta_{1}^{2})
, \qquad\text{(see  \cite[(1.5)]{CL-JDG}).}
\end{aligned}
\end{equation}
By (\ref{g2g3xy}), we have
\begin{equation}\label{diff-g3}
    g_3'(b)=\frac{1}{\pi}(\frac{1}{6}g_2^2-3\eta_1g_3)=\frac{1}{\pi}(\frac{1}{6}g_2^2+3e_1\eta_1g_2-12e_1^3).
\end{equation}
Moreover, we have the following conclusions about the derivatives.

\begin{proposition}\label{pro-e}
 \cite[Theorem 1.7]{LW} { We have  $e_1\left(\frac{1}{2}\right)=0$ and
\begin{equation*}
 e_1'(b) >0 \qquad
 \text{for all}\quad b>0.
\end{equation*}}
\end{proposition}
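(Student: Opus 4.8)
\textbf{Proof proposal for Proposition \ref{pro-e}.}

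The plan is to derive both assertions from the explicit formula for $e_1'(b)$ in \eqref{eq-diff}, namely $e_1'(b)=\tfrac{1}{\pi}\bigl(e_1^2-\eta_1 e_1-\tfrac16 g_2\bigr)$, combined with the known behaviour of $\wp$, $\zeta$ at $\tau=\tfrac12+bi$. The vanishing $e_1(\tfrac12)=0$ is a classical fact about the lemniscatic-type point $\tau=\tfrac12+\tfrac{i}{2}$ — wait, more precisely one should check which $b$ makes $e_1=0$: since $e_1=2x$ in the notation \eqref{xy}, $e_1=0$ forces $g_3=4e_1e_2e_3=0$, and at $\tau=\tfrac12+bi$ one has $e_1\in\R$; the statement asserts this happens exactly at $b=\tfrac12$, which follows because $\tau\mapsto -1/\tau$ or the specific modular symmetry of $\tfrac12+\tfrac{i}{2}$ (equivalent to $\tau=i$ up to the $\Gamma_0$-action that swaps $e_1$ with a purely imaginary combination) forces $e_1=0$. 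So first I would invoke the cited \cite[Theorem 1.7]{LW} (or reprove it via the identity $e_1(\tfrac12+bi)=-e_1(\tfrac{1}{4b}+\tfrac{i}{2}\cdot\text{something})$-type transformation) to pin down $e_1(\tfrac12)=0$.

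For the positivity, the main idea is a differential-inequality / connectedness argument. First I would record that $e_1'(b)=\tfrac{1}{\pi}\bigl((e_1-\tfrac{\eta_1}{2})^2-\tfrac{\eta_1^2}{4}-\tfrac{g_2}{6}\bigr)$, so it suffices to understand the sign of $\tfrac{\eta_1^2}{4}+\tfrac{g_2}{6}$ together with the location of $e_1$ relative to $\tfrac{\eta_1}{2}$. An alternative and probably cleaner route: rewrite $e_1^2-\eta_1 e_1-\tfrac16 g_2$ using $g_2=12e_1^2-12(3e_1^2-g_2)/3$... that is circular; better to use $g_2 = 2(e_1^2+e_2^2+e_3^2)$ and $e_2+e_3=-e_1$, $e_2e_3 = \tfrac{g_2-3e_1^2}{... }$ hmm — cleaner: from \eqref{neq-g2e1}, $g_2-3e_1^2=-4y^2<0$, hence $\tfrac16 g_2 < \tfrac12 e_1^2$, so $e_1^2-\eta_1 e_1 - \tfrac16 g_2 > \tfrac12 e_1^2 - \eta_1 e_1 = \tfrac12 e_1(e_1-2\eta_1)$. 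Thus it would suffice to show $e_1(e_1-2\eta_1)\ge 0$, i.e. that $e_1$ and $e_1-2\eta_1$ have the same sign for all $b>0$. To handle this I would study the auxiliary quantity $e_1-2\eta_1$ (or equivalently $\wp(\tfrac12;\tau)-2\zeta(\tfrac14;\tau)$-type expression), show it vanishes nowhere except possibly where $e_1$ does, and compare signs at $b\to\infty$ using \eqref{infty}: there $e_1\to\tfrac23\pi^2>0$ and $e_1-2\eta_1\to \tfrac23\pi^2-\tfrac23\pi^2=0$, which is delicate — the limit is zero, so I would instead expand to next order or argue that $e_1-2\eta_1>0$ for large $b$ via the $q$-expansions of $\wp(\tfrac{\omega_1}{2})$ and $\eta_1$.

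The cleanest packaging is probably: (i) show $e_1'(b)>0$ whenever $e_1(b)\le 0$ (from $e_1^2-\eta_1 e_1-\tfrac16 g_2$: if $e_1\le0$ then $-\eta_1 e_1\ge 0$ provided $\eta_1>0$, and $e_1^2\ge 0$, and $-\tfrac16 g_2 > -\tfrac12 e_1^2 \ge -\tfrac12 e_1^2$, giving $e_1'>\tfrac{1}{2\pi}e_1^2\ge0$ — one needs $\eta_1>0$, which holds for $\tau=\tfrac12+bi$ and can be cited/checked); (ii) since $e_1(\tfrac12)=0$ and $e_1'(\tfrac12)>0$ by (i), $e_1$ is increasing through $0$, so $e_1>0$ for $b$ slightly above $\tfrac12$ and $e_1<0$ for $b$ slightly below; (iii) for $b<\tfrac12$, (i) shows $e_1'>0$ wherever $e_1\le0$, so $e_1$ cannot cross back up to $0$ as $b$ decreases — a standard ODE barrier argument gives $e_1<0$ and $e_1'>0$ on all of $(0,\tfrac12)$; (iv) for $b>\tfrac12$, I must show $e_1$ stays positive and $e_1'$ stays positive — here use $e_1'>\tfrac1{2\pi}e_1(e_1-2\eta_1)$ from the bound above, so it is enough that $e_1>2\eta_1$ fails to cause trouble, i.e. show $e_1\ge 2\eta_1$ cannot be violated, or directly that the RHS of \eqref{eq-diff} stays positive; combine with the endpoint value \eqref{infty} and monotonicity of $g_2,\eta_1$ from \eqref{eq-diff}.

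\textbf{Main obstacle.} The delicate part is the range $b>\tfrac12$: the naive bound $e_1'>\tfrac12 e_1^2-\eta_1 e_1$ degenerates because $e_1/\eta_1\to 2$ as $b\to\infty$, so $e_1-2\eta_1\to0$ and one cannot afford to lose the factor. I expect the real work is to establish a sharper lower bound for $e_1^2-\eta_1 e_1-\tfrac16 g_2$ valid for all $b\ge\tfrac12$ — perhaps by writing everything in terms of $x,y$ via \eqref{xy}, \eqref{g2g3xy}, reducing to showing $4x^2-2\eta_1 x-\tfrac16\cdot 4(3x^2-y^2) = -\tfrac43 x^2 - 2\eta_1 x + \tfrac23 y^2 >0$, i.e. $\tfrac23 y^2 > \tfrac43 x^2+2\eta_1 x$; for $b>\tfrac12$ we have $x=\tfrac12 e_1>0$ and this becomes a genuine inequality relating $y^2$ (the discriminant gap $(e_2-e_3)^2/(-4)$) to $x$ and $\eta_1$, which should follow from known bounds on $\eta_1$ in terms of $e_1$ (e.g. $\eta_1 < \tfrac13(e_1 - e_2 - e_3) $-type estimates, or Legendre-relation consequences) but will require care near both $b=\tfrac12$ and $b\to\infty$. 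Failing a slick inequality, a contradiction/continuity argument — if $e_1'(b_0)=0$ for some first $b_0>\tfrac12$ then differentiate \eqref{eq-diff} again and use \eqref{eq-diff} for $g_2',\eta_1'$ to show $e_1''(b_0)$ has a definite sign incompatible with $b_0$ being a first zero — is the natural fallback, and I would present that as the backbone of step (iv).
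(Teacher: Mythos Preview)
The paper does not prove this proposition at all: it is quoted verbatim from \cite[Theorem 1.7]{LW} and used as a black box. So there is no ``paper's own proof'' to compare with, and your proposal is an attempt to supply an argument where the paper simply cites one.

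On the merits of your sketch: the case $e_1\le 0$ (equivalently $b\le\tfrac12$) is essentially fine --- once you know $\eta_1>0$ on this line (which itself needs a citation or a short $q$-series argument), the inequality $e_1^2-\eta_1 e_1-\tfrac16 g_2>\tfrac12 e_1^2\ge 0$ follows cleanly from $g_2<3e_1^2$, and the barrier argument in (ii)--(iii) is sound. However, there is an arithmetic slip in your $x,y$ rewriting: with $e_1=2x$ and $g_2=4(3x^2-y^2)$ one gets
\[
e_1^2-\eta_1 e_1-\tfrac16 g_2=4x^2-2\eta_1 x-\tfrac23(3x^2-y^2)=2x^2-2\eta_1 x+\tfrac23 y^2,
\]
not $-\tfrac43 x^2-2\eta_1 x+\tfrac23 y^2$. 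The correct sign on the $x^2$ term is helpful, but does not by itself close the gap.

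The genuine hole is exactly what you flag as the ``main obstacle'': for $b>\tfrac12$ you need $2x^2-2\eta_1 x+\tfrac23 y^2>0$, i.e.\ $2x(x-\eta_1)+\tfrac23 y^2>0$, and since $x-\eta_1\to 0$ and $y\to 0$ as $b\to\infty$ this is a competition of two vanishing quantities. Neither your crude bound, your proposed inequality on $y^2$ versus $x,\eta_1$, nor the second-derivative fallback is carried far enough to settle it; the fallback in particular would require computing $e_1''$ from \eqref{eq-diff} and showing a definite sign at a putative first zero, which you have not done. The proof in \cite{LW} proceeds by entirely different means (tied to the critical-point analysis of the Green function on the torus), and it is not clear that a purely differential-inequality argument of the type you outline can be pushed through without importing comparable information about $\eta_1$ versus $e_1$.
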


\begin{proposition}\label{pro-g}
 \cite[Corollary 4.4]{CL-E4} {
There exists $b_{g}\approx 0.47\in (\frac{1}{2\sqrt{3}},\frac{1}{2})$ such that
\begin{equation*}
  g_{2}'(b)
  \begin{cases}
  <0 \qquad \text{for}\quad b\in (0, b_{g}),\\
  =0\qquad \text{for}\quad b=b_{g},\\
  >0\qquad \text{for}\quad b\in (b_{g}, \infty).
  \end{cases}
\end{equation*}
And $g_2(b)=0$ if and only if $b\in\left\{\frac{1}{2\sqrt 3}, \frac{\sqrt 3}{2}\right\}$.}
\end{proposition}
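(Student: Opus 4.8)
The plan is to reduce the whole statement to the second--order ODE satisfied by $b\mapsto g_2(\tfrac12+ib)$ coming from (\ref{eq-diff}) and (\ref{diff-g3}), combined with the modular description of the zeros of $g_2$ and the monotonicity of $e_1$ already available in Proposition \ref{pro-e}. The first step is to record the decisive differential identity at critical points: differentiating $\pi g_2'(b)=3g_3-2\eta_1g_2$ once more and substituting the formulas of (\ref{eq-diff}) and (\ref{diff-g3}) for $g_3'$, $\eta_1'$, $g_2'$, one finds that at every $b_*$ with $g_2'(b_*)=0$ (equivalently $3g_3=2\eta_1g_2$ there)
\[
g_2''(b_*)=\frac{5\,g_2(b_*)}{12\pi^2}\bigl(g_2(b_*)-12\,\eta_1(b_*)^2\bigr).
\]
In particular, whenever $g_2(b_*)<0$ both factors on the right are negative, so $g_2''(b_*)>0$: such a critical point is a nondegenerate strict local minimum. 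A standard maximum/minimum argument then shows that $g_2$ can have \emph{at most one} critical point on any interval on which $g_2<0$.

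Next I would pin down the zero set of $g_2$. Writing $g_2=\tfrac{4\pi^4}{3}E_4$ with $E_4$ the weight--$4$ Eisenstein series, whose zeros form the $\mathrm{SL}_2(\mathbb Z)$--orbit of $\rho:=e^{i\pi/3}$, and using that no point of that orbit has imaginary part exceeding $\operatorname{Im}\rho=\tfrac{\sqrt3}{2}$, one gets that $g_2(\tfrac12+ib)=0$ precisely at $b=\tfrac{\sqrt3}{2}$ (the point $\rho$) and at $b=\tfrac1{2\sqrt3}$ (whose $\tau$ is $\mathrm{SL}_2(\mathbb Z)$--equivalent to $\rho$ via $\tau\mapsto-1/\tau$ and a translation); this is already the second assertion of the proposition. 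Moreover $g_2'\neq0$ at both points: at $b=\tfrac{\sqrt3}{2}$ one has $g_3=4e_1|e_2|^2>0$ because $e_1(\tfrac{\sqrt3}{2})>e_1(\tfrac12)=0$ by Proposition \ref{pro-e}, whence $g_2'=\tfrac3\pi g_3>0$; at $b=\tfrac1{2\sqrt3}$ the weight--$6$ modularity gives $g_3=-27\,g_3(\rho)<0$, whence $g_2'<0$. Together with the cusp asymptotics $g_2(\tfrac12+ib)=\tfrac1{16b^4}g_2(-\tfrac12+\tfrac{i}{4b})\to+\infty$ as $b\to0^+$ and $g_2\to\tfrac43\pi^4$ as $b\to\infty$ (from (\ref{infty})), continuity forces $g_2>0$ on $(0,\tfrac1{2\sqrt3})\cup(\tfrac{\sqrt3}{2},\infty)$ and $g_2<0$ on $(\tfrac1{2\sqrt3},\tfrac{\sqrt3}{2})$.

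Now I would treat the three intervals separately. On $(\tfrac1{2\sqrt3},\tfrac{\sqrt3}{2})$, where $g_2<0$, the identity above allows at most one critical point, while $g_2'(\tfrac1{2\sqrt3})<0<g_2'(\tfrac{\sqrt3}{2})$ forces exactly one, call it $b_g$, with $g_2'<0$ on $(\tfrac1{2\sqrt3},b_g)$ and $g_2'>0$ on $(b_g,\tfrac{\sqrt3}{2})$. On $(\tfrac{\sqrt3}{2},\infty)$ I would use the $q$--expansion $g_2'(b)=-640\pi^5\sum_{n\ge1}n\sigma_3(n)q^n$ with $q=-e^{-2\pi b}$: here $|q|<e^{-\sqrt3\pi}$ is so small that the $n=1$ term dominates the tail (bounded by $\sigma_3(n)\le\zeta(3)n^3$), giving $g_2'>0$. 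On $(0,\tfrac1{2\sqrt3})$ I would differentiate $g_2(\tfrac12+ib)=\tfrac1{16b^4}g_2(\tau_1)$, $\tau_1=-\tfrac12+\tfrac{i}{4b}$, to get
\[
\frac{d}{db}\,g_2(\tfrac12+ib)=-\frac{1}{4b^5}\,g_2(\tau_1)-\frac{1}{64\pi b^6}\bigl(3g_3(\tau_1)-2\eta_1(\tau_1)g_2(\tau_1)\bigr);
\]
since $\operatorname{Im}\tau_1>\tfrac{\sqrt3}{2}$ implies $E_4(\tau_1)>0$ hence $g_2(\tau_1)>0$, and $q_1=e^{2\pi i\tau_1}=-e^{-\pi/(2b)}$ satisfies $|q_1|<e^{-\sqrt3\pi}$, the series $3g_3(\tau_1)-2\eta_1(\tau_1)g_2(\tau_1)=-640\pi^6\sum_{n\ge1}n\sigma_3(n)q_1^{\,n}$ has the sign of $-q_1$, i.e.\ is positive, so $g_2'<0$ throughout. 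Concatenating, $g_2'<0$ on $(0,b_g)$, $g_2'(b_g)=0$, $g_2'>0$ on $(b_g,\infty)$, and $b_g\in(\tfrac1{2\sqrt3},\tfrac{\sqrt3}{2})$; the sharper localization $b_g<\tfrac12$ follows because $g_2'(\tfrac12)>0$, which a short computation at $\tau=\tfrac12+\tfrac i2$ delivers from $g_3=0$, $g_2=-4y^2$ (by (\ref{xy}), (\ref{g2g3xy})), and $\eta_1(\tfrac12+\tfrac i2)=2\pi$ (the latter via the quasi--modular transformation of $\eta_1$ and the classical value $\eta_1(i)=\pi$).

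The main obstacle is the middle interval $(\tfrac1{2\sqrt3},\tfrac{\sqrt3}{2})$: there the expansion around the cusp $\tau=\tfrac12$ breaks down and the $q$--expansion around $\tau=i\infty$ is useless because $q=-e^{-2\pi b}$ is negative and the series has oscillating signs with no dominant term. This is precisely the reason the ODE identity for $g_2''$ at critical points is indispensable — it is the only ingredient that bounds the number of critical points exactly where the naive asymptotic sign analysis fails, and everything else (the zero set, the transversality, the two one--sided monotonicities) is designed to hand that interval to it.
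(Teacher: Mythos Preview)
The paper does not contain its own proof of this proposition: it is quoted verbatim as \cite[Corollary 4.4]{CL-E4} and used as input in Sections~4--5. So there is no ``paper's proof'' to compare against; what you have produced is an independent argument for a result the paper merely cites.

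Your approach is sound and the key mechanism is correct. Differentiating $\pi g_2'=3g_3-2\eta_1 g_2$ and substituting (\ref{eq-diff}), (\ref{diff-g3}) does give, at any critical point $b_*$,
\[
g_2''(b_*)=\frac{5\,g_2(b_*)}{12\pi^2}\bigl(g_2(b_*)-12\eta_1(b_*)^2\bigr),
\]
and on an interval where $g_2<0$ this forces every critical point to be a nondegenerate local minimum, hence there is at most one. Your $q$-series estimates on $(\sqrt3/2,\infty)$ and the transported estimate on $(0,1/(2\sqrt3))$ via $g_2(b)=\tfrac{1}{16b^4}g_2(\tfrac{1}{4b})$ are also correct, and the value $\eta_1(\tfrac12+\tfrac i2)=2\pi$ (hence $g_2'(\tfrac12)=16y^2>0$ and $b_g<\tfrac12$) checks out from the quasi-modular transformation of $E_2$ at the elliptic fixed point of $\begin{pmatrix}1&-1\\2&-1\end{pmatrix}$.

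The one place to tighten is the determination of the zero set. Your sentence ``no point of that orbit has imaginary part exceeding $\operatorname{Im}\rho$'' only rules out zeros for $b>\sqrt3/2$; it does not by itself show that $b=\tfrac{1}{2\sqrt3}$ is the \emph{only} zero below $\sqrt3/2$. A clean way to close this: first use your $q$-series bound to get $g_2'>0$ on $(\sqrt3/2,\infty)$, hence $g_2>0$ there; feed this into the involution identity to get $g_2'<0$ on $(0,\tfrac{1}{2\sqrt3})$, hence $g_2>0$ there; it remains to see $g_2\neq0$ on $[\tfrac12,\tfrac{\sqrt3}{2})$ (and then the involution handles $(\tfrac{1}{2\sqrt3},\tfrac12]$). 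For $b\in[\tfrac12,\tfrac{\sqrt3}{2}]$ the map $\tau\mapsto 1-1/\tau\in SL_2(\mathbb Z)$ sends $\tfrac12+ib$ onto the arc $|\tau'|=1$, $\operatorname{Re}\tau'\in[0,\tfrac12]$, where $E_4$ is real and vanishes only at $\rho$. With this addition your argument is complete and yields exactly the statement the paper quotes.
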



\begin{proposition}\label{pro-eta}
 \cite[Theorem 1.7]{LW}
There exists $b_{\eta}\approx 0.24108<\frac{1}{2\sqrt{3}}$ such that
\begin{equation*}
  \eta_1'(b)
  \begin{cases}
  >0 \qquad \text{for}\quad b\in (0, b_{\eta}),\\
  =0\qquad \text{for}\quad b=b_{\eta},\\
  <0\qquad \text{for}\quad b\in (b_{\eta}, +\infty).
  \end{cases}
\end{equation*}\end{proposition}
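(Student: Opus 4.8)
The plan is to reduce the whole statement to the sign of the single function
\[ h(b):=g_2-12\eta_1^2, \]
since (\ref{eq-diff}) gives $\eta_1'(b)=\tfrac1{24\pi}h(b)$, so the zeros and sign of $\eta_1'$ are exactly those of $h$. First I would differentiate $h$, using the formulas for $g_2'$ and $\eta_1'$ in (\ref{eq-diff}) together with $g_3=4e_1^3-e_1g_2$ from (\ref{g2g3xy}), to get
\[ h'(b)=\tfrac{3}{\pi}\bigl(g_3-\eta_1 g_2+4\eta_1^3\bigr)=\tfrac{3}{\pi}(e_1+\eta_1)\bigl(4e_1^2-4e_1\eta_1+4\eta_1^2-g_2\bigr), \]
the factorization following from $4(e_1^3+\eta_1^3)=4(e_1+\eta_1)(e_1^2-e_1\eta_1+\eta_1^2)$. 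Writing $g_2=3e_1^2-4y^2$ (which is (\ref{neq-g2e1}) in the notation (\ref{xy})), the second factor simplifies to $(e_1-2\eta_1)^2+4y^2>0$ because $y\neq0$. Hence $\operatorname{sign}h'(b)=\operatorname{sign}(e_1+\eta_1)$, and the whole problem collapses to controlling the scalar function $k(b):=e_1(b)+\eta_1(b)$.

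The key step is to show that $k$ changes sign exactly once, and here I would \emph{not} try to prove $k$ monotone; instead I evaluate $k'$ only at the zeros of $k$. Using $e_1'=\tfrac1\pi(e_1^2-\eta_1e_1-\tfrac16 g_2)$ and $\eta_1'$ from (\ref{eq-diff}), at a point where $\eta_1=-e_1$ one finds, again after substituting $g_2=3e_1^2-4y^2$,
\[ k'=\tfrac{1}{\pi}\bigl(\tfrac32 e_1^2-\tfrac18 g_2\bigr)=\tfrac{1}{\pi}\bigl(\tfrac98 e_1^2+\tfrac12 y^2\bigr)>0. \]
Thus every zero of $k$ is a simple up-crossing, so $k$ has \emph{at most one} zero. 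Consequently $h$ is either monotone or else strictly decreasing and then strictly increasing, with a single interior minimum; no other profile is possible.

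It then remains to read off the boundary behaviour of $h$ and exclude the monotone cases. From (\ref{infty}) one gets $h(+\infty)=\tfrac43\pi^4-12\cdot\tfrac19\pi^4=0$. At $b=\tfrac12$, Proposition \ref{pro-e} gives $e_1=0$, whence $g_2=-4y^2<0$ and $h(\tfrac12)<0$; this excludes "$h$ decreasing on all of $(0,\infty)$", which would force $h(\tfrac12)>h(+\infty)=0$. To exclude "$h$ increasing everywhere" I would establish $h(b)\to+\infty$ as $b\to0^+$. Granting this, $h$ must be decreasing-then-increasing: its minimum sits at the unique zero $b_*$ of $k$; on $(b_*,+\infty)$ it increases to $0$, hence stays negative; and on $(0,b_*)$ it decreases from $+\infty$ through $0$ exactly once, at a point $b_\eta<b_*$. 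This yields a unique $b_\eta$ with $\eta_1'(b_\eta)=0$, with $\eta_1'>0$ on $(0,b_\eta)$ and $\eta_1'<0$ on $(b_\eta,+\infty)$ (so $b_\eta$ is the unique maximum of $\eta_1$). Finally $h(b_\eta)=0$ forces $g_2(b_\eta)=12\eta_1(b_\eta)^2\ge0$, so by Proposition \ref{pro-g} together with $h(\tfrac12)<0$ (which already gives $b_\eta<\tfrac12$) we conclude $b_\eta<\tfrac1{2\sqrt3}$; the value $b_\eta\approx0.24108$ then comes from solving $g_2=12\eta_1^2$ numerically.

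The main obstacle is the asymptotics $h(b)\to+\infty$ as $b\to0^+$. The difficulty is that the leading $b^{-4}$ singularities of $g_2$ and of $12\eta_1^2$ cancel, so a crude limit does not suffice. I would send the cusp $\tfrac12$ to $i\infty$ by the modular substitution $\tau\mapsto \tau/(1-2\tau)$, apply the weight-$4$ modularity of $g_2$ and the affine (quasimodular, weight-$2$ with the additive anomaly) transformation law of $\eta_1$, and track the first surviving term. This should give $h(b)=\dfrac{4\pi^3}{b^{3}}+o(b^{-3})>0$ for small $b$, closing the argument. Carrying out this expansion carefully—rather than the purely algebraic identities in the earlier steps—is the genuinely delicate part of the proof.
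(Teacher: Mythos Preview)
The paper does not supply a proof of this proposition; it is simply quoted from \cite[Theorem~1.7]{LW} (and note the evident misprint in the displayed statement: the third line should read $\eta_1'(b)<0$ for $b\in(b_\eta,+\infty)$, as is clear from its use in the proof of Theorem~\ref{thm-spec-poly-zeros}). There is therefore no in-paper argument to compare against.

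Your argument is correct. The factorization $h'(b)=\tfrac{3}{\pi}(e_1+\eta_1)\bigl((e_1-2\eta_1)^2+4y^2\bigr)$ and the up-crossing observation for $k=e_1+\eta_1$ are both valid, and together with $h(+\infty)=0$ and $h(\tfrac12)<0$ they reduce everything to the boundary claim $h(0^+)=+\infty$. Your modular strategy for that claim also works: writing $\eta_1=\tfrac{\pi^2}{3}E_2$ and applying the quasi-modular transformation of $E_2$ under $\gamma=\left(\begin{smallmatrix}1&0\\-2&1\end{smallmatrix}\right)$, which sends $\tfrac12+ib$ to a point of imaginary part $\tfrac{1}{4b}$, one finds $\eta_1(b)=-\tfrac{\pi^2}{12b^2}+\tfrac{2\pi}{b}+O\bigl(b^{-2}e^{-\pi/(2b)}\bigr)$ while $g_2(b)=\tfrac{\pi^4}{12b^4}+O\bigl(b^{-4}e^{-\pi/(2b)}\bigr)$; the $b^{-4}$ contributions to $h$ cancel and $h(b)=\tfrac{4\pi^3}{b^3}+O(b^{-2})\to+\infty$, exactly as you asserted. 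One small refinement for the final inequality: since $b_\eta$ is the global maximum of $\eta_1$ and $\eta_1(+\infty)=\pi^2/3>0$, you have $\eta_1(b_\eta)>0$ strictly, hence $g_2(b_\eta)=12\eta_1(b_\eta)^2>0$, which combined with $b_\eta<\tfrac12$ and the sign pattern of $g_2$ from Proposition~\ref{pro-g} gives the strict bound $b_\eta<\tfrac{1}{2\sqrt3}$.
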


\begin{remark}
All numerical computations in this paper are based on the $q=e^{2\pi i \tau}=-e^{-2\pi b}$ expansions of $e_1, g_2, \eta_1$ which are recalled here for readers' convenience.
\begin{align*}
e_1(b)&=16\pi^2\left(\frac{1}{24}+
\sum\limits_{k=1}^\infty (-1)^k
\sigma_o^ke^{-2k\pi b}\right), \quad\text{where}\,\,
\sigma_o^k=\sum\limits_{1\leq d|k,\, d \text{ is odd}} d, \\
g_2(b)&=320\pi^4\left(\frac{1}{240}+
\sum\limits_{k=1}^\infty (-1)^k
\sigma_3^ke^{-2k\pi b}\right), \quad\text{where}\,\,
\sigma_3^k=\sum\limits_{1\leq d|k} d^3, \\
\eta_1(b)&=8\pi^2\left(\frac{1}{24}-
\sum\limits_{k=1}^\infty (-1)^k
\sigma_1^ke^{-2k\pi b}\right), \quad\text{where}\,\,
\sigma_1^k=\sum\limits_{1\leq d|k} d.
\end{align*}
By numerical computation,  $\eta_1$ vanishes at $b_0\approx 0.13094$.
\end{remark}

Now we figure out all zeros of the spectral polynomial $Q_3(\l; b)$ and prove Theorem \ref{thm-spec-poly-zeros}.
\begin{proof}[Proof of Theorem \ref{thm-spec-poly-zeros}]
Recall that [cf. (\ref{spectralpolynomial})] 
\begin{equation*}
Q_3(\l):=Q_3(\l;b)
=\l\prod_{k=1}^3\left(\l^2-6e_k\l
+15\left(3e_k^2-g_2\right)\right).
\end{equation*}
First of all, by (\ref{neq-g2ek}), $0$ is not a zero of
$$R_k(\l):=\l^2-6e_k\l
+15\left(3e_k^2-g_2\right),\quad k=1, 2, 3.$$
So $0$ is a simple zero of $Q_3(\l)$. By a direct computation, the resultant of $R_i(\lambda)$ and $R_j(\lambda)$ with $i\neq j$
\begin{equation}
    \mathrm{Res}\left(R_i(\lambda), R_j(\lambda)\right)=-135(e_i-e_j)^4\neq 0,
\end{equation}
thus $R_i(\lambda)$ and $R_j(\lambda)$ cannot have common zeros for any pair $i\neq j$.
Since $\overline{R_2(\l)}=R_3(\l)$, we obtain that the two zeros $\mu, \nu$ of $R_2(\l)$ are not real, i.e.,   $\mu,\nu\in \C\setminus \R$, and then $\overline{\mu}, \overline{\nu}\in \C\setminus \R$ are the zeros of $R_3(\l)$.
We claim that $\mu\neq \nu$, otherwise, the discriminant of $R_2(\l)$ vanishes, i.e.,
$$(6e_2)^2-60(3e_2^2-g_2)=0,$$
which gives us
$$5g_2=12e_2^2\in \R,$$
then (\ref{xy}) yields $x=0$ and thus $e_1=0$, $e_3=-e_2$. Combine with (\ref{g2-sum-e}), we have
$$10(e_2^2+e_2^2)=12e_2^2,$$
which is a contradiction because $e_2\neq 0$!

By Corollary \ref{cor-degree-1}, we have
$
d(\mu)=d(\nu)=d(\overline{\mu})=d(\overline{\nu})=1
$.
Therefore, the multiplicity of any zero of $Q_3(\l; b)$ is at most 2, and a zero of $Q_3(\l; b)$ is of multiplicity $2$ if and only if it is a zero of $R_1(\l)$. Denote by
 $\vartheta_{\pm}:=3e_1\pm\frac{1}{2}
\sqrt{\Delta_{R_1}}$ to be  the zeros of
$$R_1(\l)=\l^2-6e_1\l+15(3e_1^2-g_2),$$where $\Delta_{R_1}$ denotes the discriminant of $R_1(\l)$, i.e.,
$$\Delta_{R_1}:=12(5g_2-12e_1^2)=48(3x^2-5y^2).$$
Note that $g_2<3e_1^2$, then $\vartheta_+$ and $\vartheta_-$ have the same sign when  $\Delta_{R_1}>0$.
Clearly, $\Delta_{R_1}$ is holomorphic with respect to $b$. By Proposition \ref{pro-e} and Proposition \ref{pro-g},  $\Delta_{R_1}$ is strictly increasing for $b\in [b_{g}, \frac{1}{2}]$, i.e.,
\begin{equation*}
\frac{d\Delta_{R_1}}{db}>0, \qquad \text{for}\quad b\in \left[b_{g}, \frac{1}{2}\right].
\end{equation*}
By (\ref{eq-diff}), we have
\begin{equation*}
\begin{aligned}
\frac{d\Delta_{R_1}}{db}&=\frac{12}{\pi}
\left(15g_3-10\eta_1g_2-24e_1^3+24\eta_1e_1^2+4e_1g_2\right)\\
&=\frac{12}{\pi}
\left(36e_1^3-11e_1g_2-10\eta_1g_2+24e_1^2\eta_1\right).
\end{aligned}
\end{equation*}
For $b\in \left(\frac{1}{2}, \frac{\sqrt 3}{2}\right]$, since $e_1>0$ and $g_2\leq 0$, we have
\begin{equation*}
\frac{d\Delta_{R_1}}{db}>0, \qquad \text{for}\quad b\in \left(\frac{1}{2}, \frac{\sqrt{3}}{2}\right].
\end{equation*}For $b\in \left(\frac{\sqrt{3}}{2}, +\infty\right)$, note that
\begin{equation*}
\begin{aligned}
\frac{d\Delta_{R_1}}{db}&=\frac{12}{\pi}
\left(36e_1^3-11e_1g_2-10\eta_1g_2+24e_1^2\eta_1\right),\\
&=\frac{96}{\pi}
\left(3x^3+11xy^2+\left(5y^2-3x^2\right)\eta_1\right),
\end{aligned}
\end{equation*}
and $g_2(b), g_2'(b)>0$ gives us
\begin{equation*}
\eta_1<\frac{3g_3}{2g_2}=\frac{3x(x^2+y^2)}{3x^2-y^2}.
\end{equation*}
If $5y^2-3x^2\geq 0$, it is clear that $\Delta_{R_1}'(b)>0$. If $5y^2-3x^2<0$, we have
\begin{equation*}
\begin{aligned}
\frac{d\Delta_{R_1}}{db}
&=\frac{96}{\pi}
\left(3x^3+11xy^2+\left(5y^2-3x^2\right)\eta_1\right),\\
&>\frac{96}{\pi}
\left(3x^3+11xy^2+\left(5y^2-3x^2\right)\frac{3x(x^2+y^2)}{3x^2-y^2}\right)\\
&=\frac{384(9x^2y^2+y^4)}{\pi(3x^2-y^2)}>0.
\end{aligned}
\end{equation*}
Hence
\begin{equation*}
\frac{d\Delta_{R_1}}{db}>0, \qquad \text{for}\quad b\in \left(\frac{\sqrt{3}}{2}, +\infty\right).
\end{equation*}
Note that
\begin{equation*}
    \lim\limits_{b\to +\infty}\Delta_{R_1}(b)=
    12\left(\frac{20\pi^4}{3}-\frac{48\pi^2}{9}\right)
    =16\pi^4>0,
\end{equation*}
and
\begin{equation*}
   \Delta_{R_1}(b)=12(5g_2-12e_1^2)<0 \qquad \text{for}\quad b\in \left[\frac{1}{2\sqrt{3}}, \frac{\sqrt{3}}{2}\right],
\end{equation*}
then there exists unique $\beta\in  \left[\frac{1}{2\sqrt{3}}, +\infty\right)$ such that
\begin{equation*}
    \Delta_{R_1}(b)
    \begin{cases}
    <0, \qquad \text{for} \quad b\in (\frac{1}{2\sqrt{3}},\beta),\\
    =0,  \qquad \text{for} \quad b=\beta,\\
    >0, \qquad \text{for} \quad b\in (\beta, +\infty).
    \end{cases}
\end{equation*}
In fact, $\beta\approx 1.0979>\frac{\sqrt{3}}{2}$ by numerical computation.
On the other hand,
since $\tau=\frac{1}{2}+ib$, we have
\begin{equation*}
\frac{\tau-1}{2\tau-1}=\frac{1}{2}+i\frac{1}{4b} \qquad
\text{for}\quad \tau=\frac{1}{2}+ib.
\end{equation*}It is well know that \cite[p. 346]{CL-CMP}
\begin{equation}\label{eq-eg-mod}
\begin{aligned}
e_1(\frac{1}{4b} )&=-4b^2e_1(b),\\
g_2(\frac{1}{4b} )&=16b^4g_2(b),
\end{aligned}\end{equation}
then
\begin{equation*}
    \Delta_{R_1}(b)
    =\frac{1}{16b^4}\Delta_{R_1}(\frac{1}{4b}),
\end{equation*}
so there exists unique $\widehat{\beta}=\frac{1}{4\beta}\approx 0.2277\in\left(0,\frac{1}{2\sqrt{3}}\right)$ such that
\begin{equation*}
    \Delta_{R_1}(b)
    \begin{cases}
    <0, \qquad \text{for} \quad b\in (\widehat{\beta}, \frac{1}{2\sqrt{3}}),\\
    =0,  \qquad \text{for} \quad b=\widehat{\beta},\\
    >0, \qquad \text{for} \quad b\in (0, \widehat{\beta}).
    \end{cases}
\end{equation*}
Moreover,
\begin{equation*}
   \Delta_{R_1}'(b)<0 \qquad \text{for} \quad b\in (0,\frac{1}{2\sqrt{3}}).
\end{equation*}
Consequently, 
\begin{equation*}
    \Delta_{R_1}(b)
    \begin{cases}
    >0, \qquad \text{for} \quad b\in (0,\widehat\beta)\cup (\beta, +\infty),\\
    =0,  \qquad \text{for} \quad b\in \{\beta, \widehat\beta\},\\
    <0, \qquad \text{for} \quad b\in (\widehat\beta, \beta).
    \end{cases}
\end{equation*}
If $b\in (\widehat{\beta}, \beta)$, then $\vartheta_-=\overline{\vartheta_+}\not\in \R$.
Corollary \ref{cor-degree-1} gives us
\begin{equation*}
d(\vartheta_+)=d(\vartheta_-)=1.
\end{equation*}

Next, if $\l\in \C$ satisfies $Q_3(\l)=0$, by (\ref{fun-l}) and (\ref{eq-hy}), there exists \emph{unique} (cf. \cite[Proposition 6.4]{CLW}) $\a=\{a_1, a_2, a_3\}\in Y_b^3$ with $\a=-\a$ such that
\begin{equation}\label{la}
    \l=\l_{\a}=5\left(\wp(a_1)+\wp(a_2)+\wp(a_3)\right).
\end{equation}
%
Note that $\a=-\a$ with $\a\in Y_b^3$ yields that $\a=\{\frac{\omega_1}{2},\, \frac{\omega_2}{2},\, \frac{\omega_3}{2}\}$ or $\a\in M$ with
\begin{equation}\label{eq-a}
    M=\left\{\{\frac{\omega_k}{2},\, a,\, -a\}\, \Big|\,  \begin{array}{l}
    a\in T_\tau^*\setminus  \{\frac{\omega_1}{2}, \frac{\omega_2}{2}, \frac{\omega_3}{2}\}, \quad k=1,2,3. \\
         \zeta(a-\frac{\omega_k}{2})+\zeta(2a)-3\zeta(a)+\zeta(\frac{\omega_k}{2})=0,
    \end{array}\right\}.
\end{equation}
If $\a=\{\frac{\omega_1}{2},\, \frac{\omega_2}{2},\, \frac{\omega_3}{2}\}$, then
$$\l_{\{\frac{\omega_1}{2},\, \frac{\omega_2}{2},\, \frac{\omega_3}{2}\}}
=5(e_1+e_2+e_3)=0.$$
By \cite[Example 3.4]{CL-JDG},
\[ \chi_{\{\frac{\omega_1}{2},\, \frac{\omega_2}{2},\, \frac{\omega_3}{2}\}}=\frac{12(3g_3-2\eta_1g_2)}{g_2^3-27g_3^2}=\frac{12\pi g_2'}{g_2^3-27g_3^2},\]
where $g_2^3-27g_3^2=16(e_1-e_2)^2(e_1-e_3)^2(e_2-e_3)^2\neq 0$.
Proposition \ref{pro-g} tells us that $\chi_{\{\frac{\omega_1}{2},\, \frac{\omega_2}{2},\, \frac{\omega_3}{2}\}}=0$ if and only if $b=b_{g}$. By Theorem \ref{thm-degree} and the multiplicity of $0$ is 1, we obtain
that
$d(0)\geq 3$ if and only if $b=b_{g}$,
otherwise, $d(0)=1$.


For $b\in \{\widehat{\beta}, \beta\}$, note that  $3e_1=\vartheta_+=\vartheta_-$  is of multiplicity 2 and then (\ref{degree-formula}) gives
\begin{equation}\label{eq-degree-2}
\begin{aligned}
  d(3e_1)=2+p_i(3e_1)\geq 2.
\end{aligned}
\end{equation}
By a direct computation, we have $\{\frac{1}{2},\, a,\, -a\}\in M$ if and only if $\{\frac{1}{2},\, \bar a,\, -\bar a\}\in M$ and $\{\frac{\omega_2}{2},\, a,\, -a\}\in M$ if and only if $\{\frac{\omega_3}{2},\, \bar a,\, -\bar a\}\in M$. Then the zeros of $R_1(\lambda)$ can be written as $\lambda_\a$ with $\a=\{\frac{1}{2},\, a,\, -a\}\in M$.
In particular, there exists  $a\in T_\tau^*\setminus \{\frac{\omega_1}{2}, \frac{\omega_2}{2}, \frac{\omega_3}{2}\}$ such that
\begin{equation*}
    3e_1=\l_{\{\frac{\omega_1}{2},\, a,\,-a\}}=5\left(e_1+2\wp(a)
    \right),
\end{equation*}
thus \begin{equation}\label{eq-wptheta}
    \wp(a)=-\frac{1}{5}e_1.
\end{equation}
At the same time, the discriminant $\Delta_{R_1}=0$ gives us
\begin{equation}\label{eq-g2e1}
    g_2=\frac{12}{5}e_1^2,
\end{equation}
and then
\begin{equation}\label{eq-g3e1}
    g_3=4e_1^3-e_1g_2=\frac{8}{5}e_1^3.
\end{equation}
Note that $\wp'^2=4\wp^3-g_2\wp-g_3$ and $\wp''=6\wp^2-g_2/2$, by (\ref{eq-chi}), we have
\begin{equation*}
    \begin{aligned}
        -\frac{1}{2}\left(e_1-\wp(a)\right)\chi_{\{\frac{\omega_1}{2},\,a,\,-a\}}
        =\frac{e_1+\eta_1}{\left(6e_1^2-g_2/2\right)\left(e_1-\wp(a)\right)}-
        \frac{\wp(a)+\eta_1}{4\wp(a)^3-g_2\wp(a)-g_3}.
    \end{aligned}
\end{equation*}
Combine with (\ref{eq-wptheta}, \ref{eq-g2e1}, \ref{eq-g3e1}), we obtain that
\begin{equation*}
    -\frac{3}{5}e_1\chi_{\{\frac{\omega_1}{2},\, a,\,-a\}}=\frac{25}{24}\eta_1>0,
\end{equation*}
so $\chi_{\{\frac{\omega_1}{2},\, a,\,-a\}}\neq 0$ which means $d(3e_1)\leq 2$ by Theorem \ref{thm-degree}.
Apply (\ref{eq-degree-2}), we obtain that  \begin{equation*}
    d(3e_1)=2 \qquad \text{for}\quad b\in \{\widehat{\beta}, \beta\}.
\end{equation*}
The proof of Theorem \ref{thm-spec-poly-zeros} is complete.
\end{proof}
Finally,
from Theorem \ref{thm-spec-poly-zeros}, Theorem \ref{thm-spec2} and Proposition \ref{pro-e}, we have the following proposition.
\begin{proposition}\label{prop-neginterval}
For $b\in [\widehat{\beta}, +\infty)$, we have
$(-\infty, 0]\subseteq \sigma(L_b^3).$
\end{proposition}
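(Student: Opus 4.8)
The plan is to pin down the semi-infinite spectral arc $\sigma_\infty$ and observe that it already swallows $(-\infty,0]$. First I would recall that for $\tau=\tfrac12+ib$ we have $\sigma_\infty\subseteq\R$ by Theorem~\ref{thm-spec2}(1), and that by Theorem~\ref{thm-spec1} the set $\sigma_\infty$ is a semi-infinite simple analytic arc whose non-compact end tends to $-\infty+\langle q\rangle$ and whose unique finite endpoint $c$ is a zero of $Q_3(\cdot;\tau)$ of odd order. Since $\sigma_\infty$ is an injective continuous image of a half-line lying in $\R$, with one finite endpoint $c$ and unbounded below, it must coincide with the ray $(-\infty,c]$: a point of $\sigma_\infty$ strictly to the right of $c$ would force the arc to return to $c$ a second time on its way to $-\infty$, contradicting simplicity of the arc. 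So the whole proposition reduces to the single inequality $c\ge 0$.

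To establish $c\ge 0$ I would run through the classification of the zeros of $Q_3(\cdot;b)$ provided by Theorem~\ref{thm-spec-poly-zeros}. Its zeros are $0,\mu,\overline\mu,\nu,\overline\nu,\vartheta_+,\vartheta_-$, and since $\mu,\nu\notin\R$ the four zeros $\mu,\overline\mu,\nu,\overline\nu$ cannot be the (real) endpoint $c$. For $b\in(\widehat\beta,\beta)$ we are in case (z4), where $\vartheta_\pm\notin\R$ as well, so the only real zero is $0$ and necessarily $c=0$. For $b=\beta$ case (z1) holds, and $\vartheta_-=\vartheta_+=3e_1$ is a zero of multiplicity $2$, hence of even order and excluded as a finite endpoint, so again $c=0$. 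Finally, for $b\in(\beta,+\infty)$ case (z3) gives $0<\vartheta_-<3e_1<\vartheta_+$, so the real zeros $0,\vartheta_-,\vartheta_+$ are all nonnegative and $c\in\{0,\vartheta_-,\vartheta_+\}\subseteq[0,+\infty)$. In every case $c\ge 0$, whence $(-\infty,0]\subseteq(-\infty,c]=\sigma_\infty\subseteq\sigma(L_b^3)$.

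The computational content here is light, since Theorem~\ref{thm-spec-poly-zeros} already carries the delicate sign analysis of $\Delta_{R_1}$; the one point requiring genuine care is the topological identification $\sigma_\infty=(-\infty,c]$, i.e. that the semi-infinite arc really fills the entire ray below its finite endpoint rather than detouring into the complex plane or leaving a gap — this is precisely where $\sigma_\infty\subseteq\R$ (Theorem~\ref{thm-spec2}(1)) together with simplicity of the arc is used. It is worth stressing that the hypothesis $b>\widehat\beta=\tfrac{1}{4\beta}$ cannot be dropped: for $b\in(0,\widehat\beta)$ case (z2) yields $\vartheta_-<3e_1<\vartheta_+<0$, so the finite endpoint of $\sigma_\infty$ may be the negative number $\vartheta_-$ and $(-\infty,0]$ is then not contained in the spectrum, in accordance with Theorem~\ref{thm-main}(1).
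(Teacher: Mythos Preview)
Your argument is correct and follows exactly the route the paper indicates: the paper simply states that the proposition follows from Theorem~\ref{thm-spec-poly-zeros}, Theorem~\ref{thm-spec1} and Theorem~\ref{thm-spec2}, and you have supplied precisely the details of that deduction. Your case analysis of the real zeros of $Q_3$ via (z1)--(z4) and the topological identification $\sigma_\infty=(-\infty,c]$ from simplicity of the real arc are the intended ingredients.
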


\section{Proof of the main theorem}\label{sec-proof}
Recall that \begin{equation*}
    f(\l)=\frac{4}{15} \l^3+\frac{8}{5}\eta_1 \l^2-3g_2 \l+9g_3-6\eta_1 g_2.
\end{equation*}
 By Proposition \ref{pro-g}, we have
\begin{equation*}
    f(0)=3\pi g_2'(b)\begin{cases}
    <0 \qquad \text{for} \quad b\in (0, b_g),\\
    =0 \qquad \text{for} \quad b=b_g,\\
    >0 \qquad \text{for} \quad b\in (b_g,+\infty).
    \end{cases}
\end{equation*}
From here and Proposition \ref{prop-neginterval},
we obtain the following lemma.
\begin{lemma}\label{prop-inner}
For any $b\in (b_{g}, +\infty)$, there exists $\l_-<0$ such that $\l_-$ is an inner intersection point of $\sigma(L_b^3)$. Furthermore, the spectrum $\sigma(L_b^3)$ always includes the following piece denoted by $\Sigma$.
\begin{center}
\begin{tikzpicture}
    \draw(-5, 0) node[right]{{{\large \bf$\Sigma:$}}};
\draw[thick] (-2.5,0)--(0,0);
   \draw [thick,domain=-50:50] plot ({cos(\x)-2}, {sin(\x)});
   \fill ({cos(50)-2},{sin(50)}) circle (1.5pt);
    \fill ({cos(50)-2},{-sin(50)}) circle (1.5pt);
   \fill (0,0) circle (1.5pt) node[below]{{0}};
      \fill (-1,0) circle (1.5pt);
      \draw(-0.75, 0) node[below]{{$\l_-$}};
      \draw(-2.45, 0) node[below]{{$-\infty$}};
\end{tikzpicture}
\end{center}
\end{lemma}

Consider the derivative of $f(\l)$ denoted by
$$f'(\l)=\frac{4}{5}\l^2+\frac{16}{5}\eta_1\l-3g_2,$$ we have the following conclusion about the discriminant of $f'(\l)$.
\begin{lemma}
There exist $\gamma_1\in (\frac{1}{2\sqrt{3}}, b_g)$ and $\gamma_2\in (\frac{1}{4b_g}, \frac{\sqrt{3}}{2})$ such that
\begin{equation*}
  \Delta_{f'}=\frac{16}{25}(16\eta_1^2+15g_2)\begin{cases}
    >0 \qquad \text{if} \quad b\in (0, \gamma_1)\cup (\gamma_2, +\infty),\\
    =0 \qquad \text{if} \quad b\in \{\gamma_1, \,\gamma_2\},\\
    <0 \qquad \text{if} \quad b\in (\gamma_1, \gamma_2).
    \end{cases}
\end{equation*}
Consequently, $f$ is strictly increasing over $[\gamma_1, \gamma_2]$  and thus has only one real root for $b\in [\gamma_1, \gamma_2]$.
\end{lemma}

\begin{proof} First of all, it is clear that $\Delta_{f'}(b)>0$ for $b\in (0, \frac{1}{2\sqrt{3}}]\cup [\frac{\sqrt{3}}{2}, +\infty)$ by Proposition \ref{pro-g}. The well-known Fourier expansion of $g_2$ gives us numerically $g_2(\frac12)\approx-76.6\pi^2$. This, together with the modular property of $g_2$ (cf. \eqref{eq-eg-mod}) and $\eta_1(\tfrac{1}{2})=2\pi$,
$\eta_1(\tfrac{1}{2\sqrt{3}})=2\sqrt{3}\pi$ (see e.g. \cite[(4.14)]{CFL-ADV}), implies
\begin{equation*}
\begin{aligned}
 \max_{b\in [b_g,\frac12]}\Delta_{f'}(b)&<\tfrac{16}{25}\left(16\left(\eta_1\left(\tfrac{1}{2\sqrt{3}}\right)\right)^2+15g_2\left(\tfrac12\right)\right)<0,\\
  \max_{b\in [\frac{1}{2},\frac{1}{4b_g}]}\Delta_{f'}(b)&<\tfrac{16}{25}\left(16\left(\eta_1\left(\tfrac{1}{2}\right)\right)^2+15g_2\left(\tfrac{1}{4b_g}\right)\right)\\
  &=\tfrac{16^2}{25}\left(\left(2\pi\right)^2+15b_g^4g_2\left(b_g\right)\right)\\
  &<\tfrac{16^2}{25}\left(4\pi^2+15(\tfrac{1}{2\sqrt 3})^4g_2\left(\tfrac{1}{2}\right)\right)
  <0.  \end{aligned}
\end{equation*}
By Proposition \ref{pro-g} and \ref{pro-eta}, $\Delta_{f'}$ is strictly decreasing over $[\frac{1}{2\sqrt{3}}, b_g]$, hence, there exists unique $\gamma_1\in (\frac{1}{2\sqrt{3}}, b_g)$ such that
\begin{equation}
    \Delta_{f'}(\gamma_1)
    \begin{cases}
    >0, \qquad \text{for}\quad b\in (0, \gamma_1),\\
    =0, \qquad \text{for}\quad b=\gamma_1,\\
    <0, \qquad \text{for}\quad b\in (\gamma_1, \frac{1}{4b_g}].
    \end{cases}
\end{equation}

Since $\Delta_{f'}(\tfrac{1}{4b_g})<0$ and $\Delta_{f'}(\tfrac{\sqrt{3}}{2})>0$, there exist at least one $b\in (\tfrac{1}{4b_g}, \tfrac{\sqrt{3}}{2})$ such that $\Delta_{f'}(b)=0$. On the other hand,
if  $b\in (\tfrac{1}{4b_g}, \tfrac{\sqrt{3}}{2})$ satisfying $\Delta_{f'}(b)=\frac{16}{25}(16\eta_1^2+15g_2)=0$, by \eqref{eq-diff},  we have
\begin{equation}\label{diff-Delta-f-prime}
\begin{aligned}
 \Delta_{f'}'(b)
   & =\frac{16}{25\pi}(45g_3-16\eta_1^3-\frac{86}{3}\eta_1g_2)\\
   & =\frac{16}{25\pi}(45g_3-\frac{41}{3}\eta_1g_2)>0
\end{aligned}
\end{equation}
because $g_3=4e_1|e_2|^2>0$ for $b>\frac12$.  Therefore, there exists unique $b\in (\frac{1}{4b_g}, \tfrac{\sqrt{3}}{2})$ denoted by $\gamma_2$ such that $\Delta_{f'}(\gamma_2)=0$.   The proof is complete.
\end{proof}

Denote by 
$  f'(\l)=\frac{4}{5}\l^2+\frac{16}{5}\eta_1\l-3g_2:=\frac{4}{5}(\l-\l_L)(\l-\l_R)$,
we have
\begin{equation*}
    \begin{cases}
      \l_L+\l_R=-4\eta_1,\\
      \l_L\l_R=-\frac{15}{4}g_2.
    \end{cases}
\end{equation*}
If $\lambda_\cdot=\lambda_L$ or $\lambda_R$, i.e., $f'(\lambda_\cdot)=0$, then
\begin{equation*}
    \begin{aligned}
      f(\l_\cdot)&=\frac{\l_\cdot}{3}\left(3g_2-\frac{16}{5}\eta_1\l_\cdot\right)+\frac{8}{5}\eta_1 \l_\cdot^2-3g_2\l_\cdot-6\eta_1 g_2+9g_3\\
      &=\frac{8}{15}\eta_1\l_\cdot^2-2g_2\l_\cdot-6\eta_1 g_2+9g_3\\
      &=\frac{2}{3}\eta_1\left(3g_2-\frac{16}{5}\eta_1\l_\cdot\right)-2g_2\l_\cdot-6\eta_1 g_2+9g_3\\
      &=-2\left(\frac{16}{15}\eta_1^2+g_2
      \right)\l_\cdot-4\eta_1 g_2+9g_3\\
      &:=-\frac{5}{24}\Delta_{f'}(\l_\cdot-H),
    \end{aligned}
\end{equation*}
where
\begin{equation*}
 H=\frac{24(9g_3-4\eta_1 g_2)}{5\Delta_{f'}}.
\end{equation*}
So
\begin{equation*}
    \begin{aligned}
      f(\l_L)f(\l_R)&=\frac{25}{24^2}\Delta_{f'}^2\left(H-\l_L\right)\left(H-\l_R\right)\\
      &=\frac{25}{24^2}\Delta_{f'}^2\left(H^2+4\eta_1H-\frac{15}{4}g_2\right)\\
      &=\frac{25}{24^2}\Delta_{f'}^2\left(\left(H+2\eta_1\right)^2-\frac{25}{64}\Delta_{f'}
      \right)\\
      &=\frac{25}{24^2}\Delta_{f'}^2\left(\frac{\left(\frac{64}{15}\eta_1^3+9g_3\right)^2}{\frac{25}{24^2}\Delta_{f'}^2}-\frac{25}{64}\Delta_{f'}
      \right)\\
      &=\left(\frac{64}{15}\eta_1^3+9g_3\right)^2-\frac{25^2}{192^2}\Delta_{f'}^3\\
      &=\frac{1}{225}\left(\left(64\eta_1^3+135g_3\right)^2-\left(16\eta_1^2+15g_2\right)^3\right).
    \end{aligned}
\end{equation*}
Clearly, all zeros of $f$ are real if and only if $f(\l_L)f(\l_R)\leq 0$.

Now, let us prove the main Theorem \ref{thm-main}.
\begin{proof}[Proof of Theorem \ref{thm-main}]
First of all, by Theorem \ref{thm-spec-poly-zeros}, the zeros of the spectral polynomial $Q_3(\l; b)$ are
\begin{equation*}
    0, \mu, \overline{\mu}, \nu, \overline{\nu}, \vartheta_+, \vartheta_-,
\end{equation*}
and
  $d(\mu)=d(\overline{\mu})=d(\nu)=d(\overline{\nu})=1$. Hence each of $ \mu, \overline{\mu}, \nu, \overline{\nu}$ is met by a single semi-arc of the spectrum, i.e. they are all  endpoints but not intersection points. In this following argument, we will use Theorem \ref{thm-spec2} frequently that $\mathbb{C}\setminus\sigma(L_b^3)$ is path-connected and $\sigma(L_b^3)$ is symmetric with respect to the real line.

\medskip
\textbf{Step1: The spectrum at $b=\beta$. }
By Theorem \ref{thm-spec-poly-zeros}, we have $\vartheta_-=\vartheta_+=3e_1>0$ and
    $d(0)=1$,
   $d(3e_1)=2$.
From Theorem \ref{thm-spec1}, Theorem \ref{thm-spec2}, Proposition \ref{prop-neginterval} and Lemma \ref{prop-inner}, we obtain the rough graph of $\sigma(L_\beta^3)$
is as follows:

\begin{center}
\begin{tikzpicture}
\draw[thick] (-2.5,0)--(0,0);
   \draw [thick,domain=-50:50] plot ({cos(\x)-2}, {sin(\x)}); 
   \draw [thick,domain=130:230] plot ({cos(\x)+2}, {sin(\x)});
   \fill ({cos(50)-2},{sin(50)}) circle (1.5pt); 
    \fill ({cos(50)-2},{-sin(50)}) circle (1.5pt); 
   \fill (0,0) circle (1.5pt) node[below]{0};
      \fill (1,0) circle (1.5pt) node[right]{$3e_1$};
         \fill ({cos(130)+2},{sin(130)}) circle (1.5pt); 
    \fill ({cos(230)+2},{-sin(130)}) circle (1.5pt); 
      \fill (-1,0) circle (1.5pt);
      \draw(-0.75, 0) node[below]{$\l_-$};
      \draw(-2.35, 0) node[below]{$-\infty$};
\end{tikzpicture}
\end{center}

\medskip

\textbf{Step2: The spectrum for $b>\beta$. } By Theorem \ref{thm-spec-poly-zeros}, we have
 $0<\vartheta_-<3e_1<\vartheta_+$ and
    $d(0)=1$,
which implies again that $0$ is an endpoint but not an intersection point.
By (\ref{degree-formula}), we have
$$d(\vartheta_\pm)=1+2p_i(\vartheta_\pm)$$ is odd, then Theorem \ref{thm-spec2} gives us the closed  interval $[\vartheta_-, \vartheta_+]\subseteq \sigma(L_b^3)$. Hence,
from  Lemma \ref{prop-inner}, Theorem \ref{thm-spec1}, Theorem \ref{thm-spec2} and above analysis, we obtain the graph of $\sigma(L_b^3)$ 
consisting of the following two parts:
\begin{center}
\begin{tikzpicture}
\draw[thick] (-2.5,0)--(0,0);
\draw[thick]  (0.7,0)--(1.5,0);
   \draw [thick,domain=-50:50] plot ({cos(\x)-2}, {sin(\x)}); 
   \draw [thick,domain=130:230] plot ({cos(\x)+6}, {sin(\x)});
   \fill ({cos(50)-2},{sin(50)}) circle (1.5pt);
    \fill ({cos(50)-2},{-sin(50)}) circle (1.5pt); 
   \fill (0,0) circle (1.5pt) node[below]{0};
      \fill (1.5,0) circle (1.5pt) node[below]{$\vartheta_+$};
         \fill ({cos(130)+6},{sin(130)}) circle (1.5pt); 
    \fill ({cos(230)+6},{-sin(130)}) circle (1.5pt); 
    \fill (0.7,0) circle (1.5pt) node[below]{$\vartheta_-$};
    \fill(5.1,0.3) node[left]{$\ell$};
      \fill (-1,0) circle (1.5pt);
      \draw(-0.75, 0) node[below]{$\l_-$};
         \draw(-2.35, 0) node[below]{$-\infty$};
      \draw(0, -2) node[above]{(part I)};
        \draw(5.5, -2) node[above]{(part II)};
\end{tikzpicture}
\end{center}
and $\ell\cap \R=\{\text{one point}\}$. 
Next, we need the following lemma.
\begin{lemma}
There exists $\l_+\in (\vartheta_-, \vartheta_+)$ such that $f(\l_+)=0$.
\end{lemma}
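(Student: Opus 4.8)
The goal is to show that the cubic $f(\lambda)$ has a root $\lambda_+$ strictly inside the interval $(\vartheta_-,\vartheta_+)$, which (by Theorem~\ref{thm-inner}) will be an inner intersection point lying on the segment $[\vartheta_-,\vartheta_+]\subseteq\sigma(L_b^3)$. Since we are in the regime $b>\beta$, we know from Theorem~\ref{thm-spec-poly-zeros}(z3) that $0<\vartheta_-<3e_1<\vartheta_+$ and that $\vartheta_\pm=3e_1\pm\tfrac12\sqrt{\Delta_{R_1}}$ with $\Delta_{R_1}=12(5g_2-12e_1^2)>0$. The natural strategy is an intermediate-value argument: evaluate $f$ at the two endpoints $\vartheta_-$ and $\vartheta_+$ (or at a convenient interior point such as $3e_1$) and show $f$ changes sign, or more robustly, compute $f(\vartheta_-)\cdot f(\vartheta_+)<0$.

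First I would record that $\vartheta_\pm$ are the roots of $R_1(\lambda)=\lambda^2-6e_1\lambda+15(3e_1^2-g_2)$, so $\vartheta_+ +\vartheta_-=6e_1$ and $\vartheta_+\vartheta_-=15(3e_1^2-g_2)$; moreover on $b>\beta$ we have the relations $g_3=4e_1^3-e_1g_2$ from \eqref{g2g3xy}. Then I would reduce $f(\vartheta_\pm)$ modulo $R_1(\vartheta_\pm)=0$: since $\vartheta_\pm^2=6e_1\vartheta_\pm-15(3e_1^2-g_2)$, every power $\vartheta_\pm^k$ for $k\geq2$ can be rewritten linearly in $\vartheta_\pm$, so $f(\vartheta_\pm)=A\vartheta_\pm+B$ for explicit $A,B$ depending polynomially on $e_1,g_2,\eta_1$. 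Consequently $f(\vartheta_-)f(\vartheta_+)=A^2\vartheta_-\vartheta_+ + AB(\vartheta_-+\vartheta_+)+B^2 = 15A^2(3e_1^2-g_2)+6ABe_1+B^2$, a symmetric expression in $e_1,g_2,\eta_1$ with no square roots. The task then becomes to show this quantity is negative for all $b>\beta$.

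To control the sign of that symmetric expression I would use the known inequalities and asymptotics assembled in Section~\ref{sec-specpoly}: $e_1>0$, $g_2>0$ and $g_2<3e_1^2$ (from \eqref{neq-g2e1}), $\eta_1>0$, the bound $\eta_1<\tfrac{3g_3}{2g_2}=\tfrac{3e_1(e_1^2+?)}{\cdots}$ valid where $g_2,g_2'>0$, and the limits \eqref{infty} as $b\to+\infty$; one can also parametrize via $e_1=2x$, $g_2=4(3x^2-y^2)$ from \eqref{xy}--\eqref{g2g3xy} to turn the condition into a polynomial inequality in $x,y,\eta_1$. Alternatively, and perhaps more cleanly, I would evaluate $f$ at $\lambda=3e_1$: using $g_2=12x^2-4y^2$ and $g_3=8x(x^2+y^2)$, $f(3e_1)$ becomes an explicit polynomial in $x,y,\eta_1$, and I would pair its sign with that of $f(\vartheta_-)$ or $f(\vartheta_+)$. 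I expect \textbf{the main obstacle} to be precisely this final sign verification: it is a genuine inequality among the transcendental quantities $e_1(b),g_2(b),\eta_1(b)$ that need not follow from the crude bounds alone, so I anticipate having to combine the derivative formulas \eqref{eq-diff}, the monotonicity statements in Propositions~\ref{pro-e}--\ref{pro-eta}, and the explicit threshold $\beta\approx1.0979$, possibly splitting into the subregime near $b=\beta$ (where $\vartheta_+-\vartheta_-\to0$, so a local/Taylor analysis of $f$ near $3e_1$ suffices, using $f(3e_1)\neq0$ which itself needs checking) and the subregime $b\to+\infty$ (where \eqref{infty} gives the limiting values and one checks $f$ has the desired sign pattern in the limit, then extends by continuity and the absence of intermediate degeneracies). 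Once the sign change is established, the IVT yields $\lambda_+\in(\vartheta_-,\vartheta_+)$ with $f(\lambda_+)=0$, completing the lemma.
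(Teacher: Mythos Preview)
Your strategy is exactly the paper's: reduce $f(\vartheta_\pm)$ modulo $R_1$ to a linear expression, form the product $f(\vartheta_-)f(\vartheta_+)$ as a symmetric polynomial in $e_1,g_2,\eta_1$, and show it is negative for $b>\beta$. You are also right that the sign verification is the crux.

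Where you fall short is in how to finish that sign verification. Your plan---split into a neighborhood of $\beta$ handled by Taylor expansion and a neighborhood of $+\infty$ handled by the limits \eqref{infty}, then invoke ``absence of intermediate degeneracies''---is not a proof: the intermediate region is precisely where you would need to show $f(\vartheta_-)f(\vartheta_+)$ never vanishes, and you have no mechanism for that beyond hope. The paper avoids this entirely by a single algebraic simplification you did not anticipate. After writing $f(\vartheta_\pm)=B(\vartheta_\pm-D)$ and expanding $f(\vartheta_-)f(\vartheta_+)=B^2\bigl((D-3e_1)^2-3(5g_2-12e_1^2)\bigr)$, one recognizes via the derivative formulas \eqref{eq-diff} that the whole expression collapses to
\[
f(\vartheta_-)f(\vartheta_+)=-\frac{\pi B^2(5g_2-12e_1^2)}{24\bigl(e_1^2-4e_1\eta_1-\tfrac{5}{12}g_2\bigr)^2}\Bigl(12(e_1+\eta_1)g_2'(b)-5(12e_1^2-g_2)\eta_1'(b)\Bigr).
\]
For $b>\beta$ one has $5g_2-12e_1^2>0$ (this is $\Delta_{R_1}>0$), $e_1>0$, $\eta_1>0$, $12e_1^2-g_2>0$ (from \eqref{neq-g2e1}), and by Propositions~\ref{pro-g} and~\ref{pro-eta} also $g_2'(b)>0$ and $\eta_1'(b)<0$. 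Hence the bracket is positive and the product is strictly negative for \emph{every} $b>\beta$, with no case analysis required. So carry out your reduction, but instead of your proposed endgame, substitute the formulas \eqref{eq-diff} for $g_2'$ and $\eta_1'$ into the symmetric expression and look for this factorization.
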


\begin{proof}
Recall that
\begin{equation*}
    \left\{\begin{aligned}
    R_1(\l)&=\l^2-6e_1\l+15(3e_1^2-g_2)=(\l-\vartheta_-)(\l-\vartheta_+),\\
   f(\l)&=\frac{4}{15}\l^3+\frac{8}{5}\eta_1 \l^2-3g_2\l+9g_3-6\eta_1g_2,
    \end{aligned}\right.
\end{equation*}
and $g_3=4e_1^3-e_1g_2$, we have
\begin{equation}\label{eq-sumdiff1}
    \begin{cases}
    \vartheta_-+\vartheta_+=6e_1,\\
    \vartheta_-\vartheta_+=15(3e_1^2-g_2),
    \end{cases}
\end{equation}
and
\begin{equation*}
   \begin{aligned}
   f(\vartheta_\pm)&=\left(\frac{4}{15}\vartheta_\pm+\frac{8}{5}\eta_1\right)\left(6e_1\vartheta_\pm-15(3e_1^2-g_2)\right)-3g_2\vartheta_\pm+9g_3-6\eta_1g_2\\
   &=\frac{8}{5}e_1\left(6e_1\vartheta_\pm-15(3e_1^2-g_2)\right)+\left(g_2+\frac{48}{5}\eta_1e_1-12e_1^2\right)\vartheta_\pm +9g_3+18\eta_1g_2-72e_1^2\eta_1\\
   &=\left(g_2+\frac{48}{5}\eta_1e_1-\frac{12}{5}e_1^2\right)\vartheta_\pm +3\left(6g_2\eta_1+5g_2e_1-72\eta_1e_1^2-36e_1^3\right)\\
   &:=B(\vartheta_\pm-D),
    \end{aligned}
\end{equation*}
where
\begin{equation}\label{eq-sumdiff2}
   \begin{aligned}
   D&=-3\frac{6g_2\eta_1+5g_2e_1-72\eta_1e_1^2-36e_1^3}{g_2+\frac{48}{5}\eta_1e_1-\frac{12}{5}e_1}\\
  &=-15e_1+\frac{15}{2}\frac{\eta_1(g_2-12e_1^2)}{e_1^2-4e_1\eta_1-\frac{5}{12}g_2}.
    \end{aligned}
\end{equation}
From (\ref{eq-sumdiff1}) and (\ref{eq-sumdiff2}), we have
\begin{equation*}
   \begin{aligned}
  & f(\vartheta_-)f(\vartheta_+)\\
  =&B^2(\vartheta_--D)(\vartheta_+-D)\\
  =&B^2\left(D^2-6e_1D+15(3e_1^2-g_2)\right)\\
  =&B^2\left((D-3e_1)^2-3(5g_2-12e_1^2)\right)\\
  =&B^2\left( \left(-18e_1+\frac{15}{2}\frac{\eta_1(g_2-12e_1^2)}{e_1^2-4e_1\eta_1-\frac{5}{12}g_2}\right)^2-3(5g_2-12e_1^2)\right)\\
 =&\frac{3B^2(5g_2-12e_1^2)}{4(e_1^2-4e_1\eta_1-\frac{5}{12}g_2)^2}\left(3(5g_2-12e_1^2)(e_1+\eta_1)^2-4(e_1^2-4e_1\eta_1-\frac{5}{12}g_2)^2\right)\\
  =&-\frac{\pi B^2\Delta_{R_1}}{288(e_1^2-4e_1\eta_1-\frac{5}{12}g_2)^2}\left(12(e_1+\eta_1)g_2'-5(12e_1^2-g_2)\eta_1'\right)<0 
    \end{aligned}
\end{equation*}
for all $b>\beta$
because $e_1>0, \eta_1>0$, $g_2'>0$, $\eta_1'<0$,  $g_2<3e_1^2$ and $\Delta_{R_1}>0$.
Therefore, there exists $\l_+\in (\vartheta_-, \vartheta_+)$ such that $f(\l_+)=0$.
\end{proof}
From Theorem \ref{thm-inner} and this lemma, $\l_+$ is an inner intersection point and then $$\l_+=\ell\cap\R\in (\vartheta_-, \vartheta_+),$$
so the rough graph of  spectrum $\sigma(L_b^3)$ for $b>\beta$ is as follows. 
\begin{center}
\begin{tikzpicture}
\draw[thick] (-2.5,0)--(0,0);
\draw[thick]  (1,0)--(2,0);
   \draw [thick,domain=-50:50] plot ({cos(\x)-2}, {sin(\x)}); 
   \draw [thick,domain=130:230] plot ({cos(\x)+2.5}, {sin(\x)});
   \fill ({cos(50)-2},{sin(50)}) circle (1.5pt); 
    \fill ({cos(50)-2},{-sin(50)}) circle (1.5pt); 
   \fill (0,0) circle (1.5pt) node[below]{0};
      \fill (2,0) circle (1.5pt);
      \draw(2.2, 0) node[below]{$\vartheta_+$};
         \fill ({cos(130)+2.5},{sin(130)}) circle (1.5pt); 
    \fill ({cos(230)+2.5},{-sin(130)}) circle (1.5pt); 
    \fill (1,0) circle (1.5pt) node[below]{$\vartheta_-$};
      \fill (-1,0) circle (1.5pt);
      \draw(-0.75, 0) node[below]{$\l_-$};
      \fill (1.5,0) circle (1.5pt);
      \draw(1.55, -0.1) node[above]{$\l_+$};
          \draw(-2.45, 0) node[below]{$-\infty$};
\end{tikzpicture}
\end{center}

\textbf{Step3: The spectrum for $b$ around $\widehat{\beta}$. }

First of all, if $b=\widehat{\beta}$, by Theorem \ref{thm-spec-poly-zeros}, we have $\vartheta_-=\vartheta_+=3e_1<0$ and then $\Delta_{R_1}(\widehat{\beta})=0$, i.e., $5g_2=12e_1^2$, thus $5g_3=8e_1^3$. By direct computations, we have
$f(3e_1)=0$ and $f'(3e_1)=4e_1\eta_1/5<0$ because $e_1(\widehat{\beta})<0$ and $\eta_1(\widehat{\beta})>0$. Note that $f(0)<0$, then $f$ has two negative real zeros $\l_-^1< \l_-^2=3e_1$ and one positive real zero $\l_+$, thus $f(\l_L)f(\l_R)\vert_{b=\widehat{\beta}}<0$.
Note that $f(\l_L)f(\l_R)\vert_{b=b_\eta}>0$ by numerical computation,
we can define
$$\alpha:=\sup{\left\{\widetilde{b}>\widehat{\beta}\quad\vert \quad f(\l_L)f(\l_R)\vert_{b=b'}<0 \, \text{ for all }\, b'\in[\widehat{\beta},\widetilde{b}) \right\}}.$$
Clearly, $\widehat{\beta}<\alpha<b_\eta$ and $f(\l_L)f(\l_R)\vert_{b=\alpha}=0$. In fact, $\alpha\approx 0.23217$ by numerical computation and $f(\l_L)f(\l_R)\vert_{\alpha<b<\alpha+\varepsilon}>0$ with $\varepsilon>0$ sufficient small.

For $b\in [\widehat{\beta},\alpha)$, all zeros of $f$ are real  because $f(\l_L)f(\l_R)\vert_{[\widehat{\beta},\alpha)}<0$.
Since $f(0)=3\pi g_2'(b)<0$ for $b\in  (0, b_g)\supset [\widehat{\beta},\alpha)$ and $f$ has two negative real zeros  at $b=\widehat{\beta}$, by continuity, $f$ has two negative real zeros $\l_-^1, \l_-^2$ for $b\in [\widehat{\beta},\alpha)$ and $\l_-:=\l_-^1= \l_-^2$ is a negative real zero of $f$ with multiplicity $2$ at $b=\alpha$.

For $b\in (\widehat{\beta},b_g)$,
we have $d(0)=1$ by Theorem \ref{thm-spec-poly-zeros}.
Since all other zeros of $Q_3(\l; b)$  are not real,  we obtain that $(-\infty, 0]\subseteq \sigma(L_b^3)$ by Theorem \ref{thm-spec2} and there are exactly $7$ finite endpoints:
$$0, \mu, \overline{\mu}, \nu, \overline{\nu}, \vartheta_+, \vartheta_-$$
 which are  not intersection points by Corollary \ref{cor-degree-1}.

For $b\in (\widehat{\beta},\alpha)\subset[\widehat{\beta},\alpha)\subset(\widehat{\beta},b_g)$,
we have $(-\infty, 0]\subseteq \sigma(L_b^3)$ and $\l_-^1<\l_-^2<0$, thus $\l_-^1$ and $\l_-^2$ are two inner intersection points by Theorem \ref{thm-inner}.
Consequently, the spectrum $\sigma(L_b^3)$ is one of the following graph.
\medskip

    \begin{tabular}{c|c|c}
    \hline\\
\begin{tikzpicture}
\draw[thick] (-3,0)--(-0.2,0);
   \draw [thick,domain=130:230] plot ({cos(\x)}, {sin(\x)});
      \draw [thick,domain=-50:50] plot ({cos(\x)-2.5}, {sin(\x)});
         \draw [dashed,domain=130:230] plot ({cos(\x)+1.3}, {sin(\x)});
  \fill ({cos(50)-2.5},{sin(50)}) circle (1.5pt); 
    \fill ({cos(50)-2.5},{-sin(50)}) circle (1.5pt); 
    \fill ({cos(130)},{sin(50)}) circle (1.5pt);
      \fill ({cos(130)},{-sin(50)}) circle (1.5pt);
    \fill ({cos(130)+1.3},{-sin(50)}) circle (1.5pt);
    \fill ({cos(130)+1.3},{sin(50)}) circle (1.5pt);
    \fill ({cos(130)+1.3},{sin(50)}) circle (1.5pt);
   \fill (-0.2,0) circle (1.5pt) node[below]{0};
      \fill (-1,0) circle (1.5pt);
      \draw(-0.75, 0) node[below]{$\l_-^2$};
         \fill (-1.5,0) circle (1.5pt);
      \draw(-1.75, 0) node[below]{$\l_-^1$};
          \draw(-2.95, 0) node[below]{$-\infty$};
       \draw(-1.5, -1.2) node[below]{(Fa)};
\end{tikzpicture}
&
\begin{tikzpicture}
\draw[thick] (-3,0)--(0,0);
   \draw [thick,domain=130:230] plot ({cos(\x)}, {sin(\x)});
      \draw [thick,domain=-50:50] plot ({cos(\x)-2.5}, {sin(\x)});
         \draw [dashed,domain=130:230] plot ({cos(\x)-0.5}, {sin(\x)});
\fill ({cos(50)-2.5},{sin(50)}) circle (1.5pt);
    \fill ({cos(50)-2.5},{-sin(50)}) circle (1.5pt);
    \fill ({cos(130)-0.5},{sin(50)}) circle (1.5pt);
    \fill ({cos(130)-0.5},{-sin(50)}) circle (1.5pt);
    \fill ({cos(130)},{sin(50)}) circle (1.5pt);
    \fill ({cos(130)},{-sin(50)}) circle (1.5pt);
   \fill (0,0) circle (1.5pt) node[below]{0};
      \fill (-1,0) circle (1.5pt);
      \draw(-0.75, 0) node[below]{$\l_-^2$};
         \fill (-1.5,0) circle (1.5pt);
      \draw(-1.75, 0) node[below]{$\l_-^1$};
          \draw(-2.95, 0) node[below]{$-\infty$};
       \draw(-1.5, -1.2) node[below]{(Fb)};
\end{tikzpicture}
&
\begin{tikzpicture}
\draw[thick] (-3,0)--(0,0);
   \draw [thick,domain=130:230] plot ({cos(\x)}, {sin(\x)});
      \draw [thick,domain=-50:50] plot ({cos(\x)-2.5}, {sin(\x)});
       \draw [dashed,domain=-50:50] plot ({cos(\x)-2}, {sin(\x)});
  \fill ({cos(50)-2.5},{sin(50)}) circle (1.5pt);
    \fill ({cos(50)-2.5},{-sin(50)}) circle (1.5pt);
    \fill ({cos(130)},{sin(50)}) circle (1.5pt);
    \fill ({cos(130)},{-sin(50)}) circle (1.5pt);
    \fill ({cos(50)-2},{sin(50)}) circle (1.5pt);
    \fill ({cos(50)-2},{-sin(50)}) circle (1.5pt);
   \fill (0,0) circle (1.5pt) node[below]{0};
      \fill (-1,0) circle (1.5pt);
      \draw(-0.75, 0) node[below]{$\l_-^2$};
         \fill (-1.5,0) circle (1.5pt);
      \draw(-1.75, 0) node[below]{$\l_-^1$};
          \draw(-2.95, 0) node[below]{$-\infty$};
       \draw(-1.5, -1.2) node[below]{(Fc)};
\end{tikzpicture}
\\
\hline
    \end{tabular}

\medskip

When $b$ goes to $\alpha$ from the left hand side, we always have $(-\infty, 0]\subseteq \sigma(L_b^3)$ and $d(0)=1$, i.e., $0$ is always an endpoint of the arc $(-\infty, 0]$ and cannot be an intersection point.  At the same time,
$\l_1, \l_2$ will come together to be $\l_-$. Also, the graphs are symmetric with respect to $\R$ by Theorem \ref{thm-spec2}, and there is no non-real inner intersection point because all zeros of $f$ are real.
Hence, (Fa) will goes to (Ga), (Fb) and (Fc) will goes to either (Gb) or (Gc).

\medskip

    \begin{tabular}{c|c|c}
    \hline\\
\begin{tikzpicture}
\draw[thick] (-2.5,0)--(0,0);
   \draw [thick,domain=130:230] plot ({cos(\x)}, {sin(\x)});
      \draw [thick,domain=-50:50] plot ({cos(\x)-2}, {sin(\x)});
         \draw [dashed,domain=130:230] plot ({cos(\x)+1.5}, {sin(\x)});
  \fill ({cos(50)-2},{sin(50)}) circle (1.5pt);
    \fill ({cos(50)-2},{-sin(50)}) circle (1.5pt);
    \fill ({cos(130)+1.5},{sin(130)}) circle (1.5pt);
     \fill ({cos(130)+1.5},{-sin(130)}) circle (1.5pt);
    \fill ({cos(130)},{-sin(130)}) circle (1.5pt);
    \fill ({cos(130)},{sin(130)}) circle (1.5pt);
   \fill (0,0) circle (1.5pt) node[below]{0};
      \fill (-1,0) circle (1.5pt);
     \draw(-0.75, 0) node[below]{$\l_-$};
         \draw(-2.45, 0) node[below]{$-\infty$};
       \draw(-1.5, -1.2) node[below]{(Ga)};
\end{tikzpicture}
&
\begin{tikzpicture}
\draw[thick] (-2.5,0)--(0,0);
   \draw [thick,domain=100:260] plot ({0.8*cos(\x)-0.2}, {0.4*sin(\x)});
      \draw [dashed,domain=130:230] plot ({cos(\x)}, {sin(\x)});
         \draw [thick,domain=-90:90] plot ({0.8*cos(\x)-1.8}, {0.4*sin(\x)});
  \fill ({0.8*cos(100)-0.2},{0.4*sin(100)}) circle (1.5pt);
    \fill ({0.8*cos(260)-0.2},{-0.4*sin(100)}) circle (1.5pt);
    \fill ({0.8*cos(90)-1.8},{0.4*sin(90)}) circle (1.5pt);
     \fill ({0.8*cos(90)-1.8},{-0.4*sin(90)}) circle (1.5pt);
    \fill ({cos(130)},{-sin(130)}) circle (1.5pt);
    \fill ({cos(130)},{sin(130)}) circle (1.5pt);
   \fill (0,0) circle (1.5pt) node[below]{0};
      \fill (-1,0) circle (1.5pt);
      \draw(-0.85, 0) node[below]{$\l_-$};
          \draw(-2.45, 0) node[below]{$-\infty$};
       \draw(-1.5, -1.2) node[below]{(Gb)};
\end{tikzpicture}
&
\begin{tikzpicture}
\draw[thick] (-3,0)--(0,0);
       \draw [thick,domain=-80:80] plot ({cos(\x)-2.5}, {0.5*sin(\x)});
   \fill (0,0) circle (1.5pt) node[below]{0};
         \fill (-1.5,0) circle (1.5pt);
      \draw(-1.3, 0) node[below]{$\l_-$};
      \draw [dashed,domain=-180:-90] plot ({cos(\x)-0.5}, {0.5*sin(\x)+0.7});
    \draw [dashed,domain=90:180] plot ({cos(\x)-0.5}, {0.5*sin(\x)-0.7});
    \fill ({cos(80)-2.5},{0.5*sin(80)}) circle (1.5pt);
    \fill ({cos(80)-2.5},{-0.5*sin(80)}) circle (1.5pt);
    \fill ({cos(90)-0.5},{-0.5*sin(90)+0.7}) circle (1.5pt);
     \fill ({cos(180)-0.5},{-0.5*sin(180)+0.7}) circle (1.5pt);
    \fill ({cos(90)-0.5},{0.5*sin(90)-0.7}) circle (1.5pt);
    \fill ({cos(180)-0.5},{0.5*sin(180)-0.7}) circle (1.5pt);
        \draw(-2.95, 0) node[below]{$-\infty$};
       \draw(-1.5, -1.2) node[below]{(Gc)};
\end{tikzpicture}
\\
\hline
    \end{tabular}
    \medskip

Since $f(\l_L)f(\l_R)\vert_{\alpha<b<\alpha+\varepsilon}>0$ and $f(0)<0$, $f$ has no negative real zero for $b\in(\alpha, \alpha+\varepsilon)$, i.e., there is no negative inner intersection point for $b\in(\alpha, \alpha+\varepsilon)$. By Theorem \ref{thm-spec1} and Theorem \ref{thm-spec2},  $\l_-$ can not disappear in the deformation of (Gb) and (Gc) when $b$ moves to the right hand side of $\alpha$, this is a contradiction!
Therefore, the rough graph of $\sigma(L_b^3)$ at $b=\alpha$ is (Ga), and then the rough graph of $\sigma(L_b^3)$ for $b\in (\widehat{\beta}, \alpha)$ is (Fa). Consequently, the rough graph of $\sigma(L_b^3)$ for $b\in (\alpha, \alpha+\varepsilon)$ with $\varepsilon>0$ sufficient small is the following:
    \begin{center}
        \begin{tikzpicture}
\draw[thick] (-2.75,0)--(0,0);
    \draw [thick,domain=-150:-30] plot ({0.7*cos(\x)-1}, {sin(\x)+1.1});
    \draw [thick,domain=30:150] plot ({0.7*cos(\x)-1}, {sin(\x)-1.1});
         \draw [thick,domain=130:230] plot ({cos(\x)+1.5}, {sin(\x)});
  \fill ({0.7*cos(30)-1},{sin(30)-1.1}) circle (1.5pt);
    \fill ({0.7*cos(150)-1},{sin(150)-1.1}) circle (1.5pt);
    \fill ({cos(130)+1.5},{sin(130)}) circle (1.5pt);
     \fill ({cos(130)+1.5},{-sin(130)}) circle (1.5pt);
    \fill ({0.7*cos(30)-1},{-sin(30)+1.1}) circle (1.5pt);
    \fill ({0.7*cos(150)-1},{-sin(150)+1.1}) circle (1.5pt);
   \fill (0,0) circle (1.5pt) node[below]{0};
     \draw(-2.95, 0) node[below]{$-\infty$};
\end{tikzpicture}
    \end{center}

Now let us go back to $b=\widehat{\beta}$, recall that $f$ has two negative real zeros $\l_-^1<\l_-^2=3e_1$ and
 $d(3e_1)=2$ by Theorem \ref{thm-spec-poly-zeros}, we obtain the rough graph of $\sigma(L_b^3)$ at $b=\widehat{\beta}$ from (Fa) as follows:
\begin{center}
\begin{tikzpicture}
\draw[thick] (-3.3,0)--(0,0);
      \draw [thick,domain=-50:50] plot ({cos(\x)-2.5}, {sin(\x)});
         \draw [thick,domain=130:230] plot ({cos(\x)+1.5}, {sin(\x)});
  \fill ({cos(50)-2.5},{sin(50)}) circle (1.5pt);
    \fill ({cos(50)-2.5},{-sin(50)}) circle (1.5pt);
    \fill ({cos(130)+1.5},{sin(130)}) circle (1.5pt);
     \fill ({cos(130)+1.5},{-sin(130)}) circle (1.5pt);
   \fill (0,0) circle (1.5pt) node[below]{0};
      \fill (-1,0) circle (1.5pt);
      \draw(-0.8, 0) node[below]{$3e_1$};
         \fill (-1.5,0) circle (1.5pt);
      \draw(-1.7, 0) node[below]{$\l_-^1$};
      \draw(-3.25, 0) node[below]{$-\infty$};
\end{tikzpicture}
\end{center}
Note that $\sigma_i(L^3_{\widehat\beta})=-4\widehat\beta^2\sigma(L^3_\beta)$, we draw the rough graphs of $\sigma_i(L^3_{\widehat\beta})$ using blue dashed lines and $\sigma(L^3_{\widehat\beta})$ using dark solid lines in the following:
\begin{center}
\begin{tikzpicture}
\draw[thick] (-3.3,0)--(0,0);
\draw[thick,dashed,blue] (2,0)--(0,0);
   \draw [thick,dashed,blue,domain=-55:55] plot ({2*cos(\x)-3}, {0.95*sin(\x)});
      \draw [thick,domain=-50:50] plot ({cos(\x)-2.5}, {sin(\x)});
         \draw [thick,domain=130:230] plot ({cos(\x)+1.5}, {sin(\x)});
                 \draw [thick,dashed,blue,domain=-52:52] plot ({cos(\x)+0.2}, {sin(\x)});
  \fill ({cos(50)-2.5},{sin(50)}) circle (1.5pt);
    \fill ({cos(50)-2.5},{-sin(50)}) circle (1.5pt);
    \fill ({cos(130)+1.5},{sin(130)}) circle (1.5pt);
     \fill ({cos(130)+1.5},{-sin(130)}) circle (1.5pt);
   \fill (0,0) circle (1.5pt) node[below]{0};
      \fill (-1,0) circle (1.5pt);
      \draw(-0.8, 0) node[below]{$3e_1$};
         \fill (-1.5,0) circle (1.5pt);
            \fill[red] (0.5,0) circle (2pt);
      \draw(-1.7, 0) node[below]{$\l_-^1$};
      \draw(-3.25, 0) node[below]{$-\infty$};
\end{tikzpicture}
\end{center}
From this graph, it is clear to see that the red point is the only element of $\Xi_{\widehat{\beta}}\cup \mathbb{R}$, so $\#(\Xi_{\widehat{\beta}}\cup \mathbb{R})=1$.

Furthermore, if $b\in (\widehat{\beta}-\delta, \widehat{\beta})$ with $\delta>0$ sufficient small, then $\vartheta_-<3e_1<\vartheta_+<0$ by Theorem \ref{thm-spec-poly-zeros} and $f$ still has two negative real zeros $\l_-^1<\l_-^2$ with $\l_-^1$ still lying on $\sigma(L_b^3)$. Hence, the rough graph of $\sigma(L_b^3)$ is the following:
\begin{center}
\begin{tikzpicture}
\draw[thick] (-3.5,0)--(-1.5,0);
\draw[thick] (-1,0)--(0,0);
      \draw [thick,domain=-50:50] plot ({cos(\x)-3}, {sin(\x)});
         \draw [thick,domain=130:230] plot ({cos(\x)+1.5}, {sin(\x)});
  \fill ({cos(50)-3},{sin(50)}) circle (1.5pt);
    \fill ({cos(50)-3},{-sin(50)}) circle (1.5pt);
    \fill ({cos(130)+1.5},{sin(130)}) circle (1.5pt);
     \fill ({cos(130)+1.5},{-sin(130)}) circle (1.5pt);
   \fill (0,0) circle (1.5pt) node[below]{0};
      \fill (-1,0) circle (1.5pt);
      \draw(-0.8, 0) node[below]{$\vartheta_+$};
         \fill (-1.5,0) circle (1.5pt);
      \draw(-1.5, 0) node[below]{$\vartheta_-$};
      \fill (-2,0) circle (1.5pt);
      \draw(-2.25, 0) node[below]{$\l_-^1$};
       \draw(-3.45, 0) node[below]{$-\infty$};
         \draw(-2, 1) node[below]{$\sigma_2$};
           \draw(0.5, 1) node[below]{$\sigma_1$};
\end{tikzpicture}
\end{center}
where $\sigma_1, \sigma_2$ denote the spectral arcs  which do not lie on $\mathbb{R}$.  In particular, $\sigma_1, \sigma_2$ are symmetric with respect to $\mathbb{R}$ and disjoint with each other by Theorem \ref{thm-spec2}.
Similarly, we put the rough graphs of $\sigma_i(L^3_{b})=-\frac{1}{4b^2}\sigma(L^3_{\frac{1}{4b}})$ in blue dashed lines and $\sigma(L^3_{b})$ in dark solid lines together for $b\in (\widehat\beta-\delta, \widehat\beta)$.
\begin{center}
\begin{tikzpicture}
\draw[thick] (-3.5,0)--(-1.5,0);
\draw[thick] (-0.7,0)--(0,0);
\draw[thick,dashed,blue] (2,0)--(0,0);
\draw[thick,dashed,blue] (-1.5,0)--(-0.7,0);
   \draw [thick,dashed,blue,domain=-70:70] plot ({1.95*cos(\x)-3}, {0.82*sin(\x)});
      \draw [thick,dashed,blue,domain=-52:52] plot ({cos(\x)+0.22}, {sin(\x)});
      \draw [thick,domain=-50:50] plot ({cos(\x)-3}, {sin(\x)});
         \draw [thick,domain=130:230] plot ({cos(\x)+1.5}, {sin(\x)});
  \fill ({cos(50)-3},{sin(50)}) circle (1.5pt);
    \fill ({cos(50)-3},{-sin(50)}) circle (1.5pt);
    \fill ({cos(130)+1.5},{sin(130)}) circle (1.5pt);
     \fill ({cos(130)+1.5},{-sin(130)}) circle (1.5pt);
   \fill (0,0) circle (1.5pt) node[below]{0};
      \fill (-0.7,0) circle (1.5pt);
      \draw(-0.6, 0) node[below]{$\vartheta_+$};
         \fill (-1.5,0) circle (1.5pt);
      \draw(-1.5, 0) node[below]{$\vartheta_-$};
      \fill (-2,0) circle (1.5pt);
      \fill[red] (0.5,0) circle (2pt);
      \draw(-2.25, 0) node[below]{$\l_-^1$};
       \draw(-3.45, 0) node[below]{$-\infty$};
         \draw(-2.3, 0.7) node[below]{$\sigma_2$};
           \draw(0.5, 1) node[below]{$\sigma_1$};
\end{tikzpicture}
\end{center}
From this graph, it is clear to see that the red point is the only element of $\Xi_{b}\cup \mathbb{R}$, so $\#(\Xi_{b}\cup \mathbb{R})=1$ for $b\in (\widehat\beta-\delta, \widehat\beta)$. By Corollary \ref{cor-even}, $k_1\in (0, \widehat\beta)$ and $k_2\in (\widehat\beta, \frac12)$, i.e., $\#(\Xi_{b}\cup \mathbb{R})=2$ for $b\in (0, k_1)$ and $\#(\Xi_{b}\cup \mathbb{R})=1$ for $b\in [k_1, k_2)$. Note that the topological graphs of $\sigma_i(L_b^3)$ are the same for all $b\in (0,\widehat{\beta})$, i.e., they are the blue dashed lines in the above picture. Since $d(0)=1$ for any $b\in (0,\widehat{\beta})$, the intersection of $\sigma_1$ and $\mathbb{R}$ can not pass through $0$, so  $\sigma_1\cap \mathbb{R}=\{r_+\}$ with $r_+>0$. Hence $r_+\in \Xi_b\cap \mathbb{R}$
for all $b\in (0, \widehat\beta)$. By the above figure, we know $\sigma_2$ will pass $\vartheta_-$ once and cannot pass through $\vartheta_+$. Denote by $\sigma_2\cap \mathbb{R}=\{r_-\}$, we have $r_-=\lambda_-^1<\vartheta_-$ for  $b\in (k_1, \widehat\beta)$ and $r_-=\lambda_-^1\in (\vartheta_-, \vartheta_+)$ for  $b\in (0,k_1)$. In particular, $r_-=\vartheta_-$ for  $b=k_1$, equivalently,  $d(\vartheta_-)>1$ for  $b=k_1$. Consequently, the rough graphs of $\sigma(L_b^3)$ with $b\in (0, \widehat\beta)$ are as follows.

\medskip
\begin{center}
\begin{tabular}{ccccc}
    \hline\\
\begin{tikzpicture}
\draw[thick] (-1.5,0)--(-0.75,0);
\draw[thick] (-0.35,0)--(0,0);
      \draw [thick,domain=-30:30] plot ({cos(\x)-1.5}, {sin(\x)});
         \draw [thick,domain=150:210] plot ({cos(\x)+1.25}, {sin(\x)});
  \fill ({cos(30)-1.5},{sin(30)}) circle (1.5pt) ;
   \fill ({cos(30)-1.5},{-sin(30)}) circle (1.5pt) ;
      \fill ({cos(150)+1.25},{sin(150)}) circle (1.5pt) ;
            \fill ({cos(150)+1.25},{-sin(150)}) circle (1.5pt) ;
   \fill (0,0) circle (1.5pt) node[below]{\tiny{0}};
      \fill (-0.35,0) circle (1.5pt);
 \fill(-0.75,0) circle (1.5pt);
       \draw(-1.5, 0.1) node[below]{\tiny{$-\infty$}};
\end{tikzpicture}
  & &
\begin{tikzpicture}
\draw[thick] (-1.5,0)--(-0.75,0);
\draw[thick] (-0.35,0)--(0,0);
      \draw [thick,domain=-30:30] plot ({cos(\x)-1.75}, {sin(\x)});
         \draw [thick,domain=150:210] plot ({cos(\x)+1.25}, {sin(\x)});
  \fill ({cos(30)-1.75},{sin(30)}) circle (1.5pt) ;
   \fill ({cos(30)-1.75},{-sin(30)}) circle (1.5pt) ;
      \fill ({cos(150)+1.25},{sin(150)}) circle (1.5pt) ;
            \fill ({cos(150)+1.25},{-sin(150)}) circle (1.5pt) ;
   \fill (0,0) circle (1.5pt) node[below]{\tiny{0}};
      \fill (-0.35,0) circle (1.5pt);
 \fill(-0.75,0) circle (1.5pt);
       \draw(-1.5, 0.1) node[below]{\tiny{$-\infty$}};
\end{tikzpicture}
  & &\begin{tikzpicture}
\draw[thick] (-1.5,0)--(-0.75,0);
\draw[thick] (-0.35,0)--(0,0);
      \draw [thick,domain=-30:30] plot ({cos(\x)-2}, {sin(\x)});
         \draw [thick,domain=150:210] plot ({cos(\x)+1.25}, {sin(\x)});
  \fill ({cos(30)-2},{sin(30)}) circle (1.5pt) ;
   \fill ({cos(30)-2},{-sin(30)}) circle (1.5pt) ;
      \fill ({cos(150)+1.25},{sin(150)}) circle (1.5pt) ;
            \fill ({cos(150)+1.25},{-sin(150)}) circle (1.5pt) ;
   \fill (0,0) circle (1.5pt) node[below]{\tiny{0}};
      \fill (-0.35,0) circle (1.5pt);
 \fill(-0.75,0) circle (1.5pt);
      \fill (-1,0) circle (1.5pt);
       \draw(-1.5, 0.1) node[below]{\tiny{$-\infty$}};
\end{tikzpicture}
\\
\scriptsize{ $0<b<k_1$}&& \scriptsize{ $b=k_1$} && \scriptsize{ $k_1\leq b<\widehat\beta$}\\
\hline
    \end{tabular}
    \end{center}
    \medskip

Before we end this step, note that $d(0)>1$ if and only if $b=b_g$, we get the following conclusions.

\begin{proposition}
    \label{prop-sigma-1}
For any $b\in (0, b_g)$, the spectrum $\sigma(L_b^3)$  always includes a simple spectral arc denoted by $\sigma_1$ which is symmetric with respect to $\mathbb{R}$ and $\sigma_1\cap\mathbb{R}=\{r_+\}$ with $r_+>0$.
\end{proposition}

Furthermore, combining Proposition \ref{prop-neginterval}, we obtain that $k_2=b_g$.


\medskip

    \textbf{Step4: The spectrum for $b<\beta$ and close to $\beta$. }

First, if $b=\beta$, then $\Delta_{R_1}({\beta})=0$, i.e., $5g_2=12e_1^2$, thus $5g_3=8e_1^3$. A direct computation gives us $f(\l_L)f(\l_R)\vert_{b={\beta}}<0$.  For $b\in (\beta-\delta,\beta)$ with $\delta>0$ sufficient small, we have $f(\l_L)f(\l_R)<0$, then all roots of $f$ are real, i.e., all inner intersection points of $\sigma(L_b^3)$ are real.
By Theorem \ref{thm-spec-poly-zeros}, we have $\vartheta_-=\overline{\vartheta_+}\notin\mathbb{R}$. Similar to $b>\beta$ case, we know the rough graph of $\sigma(L_b^3)$  consists of $\Sigma, \sigma$ and $\sigma'$, where $\sigma,\sigma'$ are two simple spectral arcs and $\sigma\cup \sigma'$ are symmetric with respect to $\mathbb{R}$. Furthermore, we have $(\sigma \cup \sigma')\cap \Sigma=\emptyset$ by the rough graph of $\sigma(L_\beta^3)$. Draw the rough graphs of   $\sigma_i(L^3_{b})=-\frac{1}{4b^2}\sigma(L^3_{\frac{1}{4b}})$ using blue dashed lines and $\Sigma$ using dark solid lines in the following:
\begin{center}
\begin{tikzpicture}
\draw[thick,dashed, blue] (-2.5,0)--(1,0);
\draw[thick] (-6,0)--(-2.5,0);
   \draw [thick,dashed, blue,domain=130:230] plot ({cos(\x)}, {sin(\x)});
    \draw [thick,domain=130:230] plot ({cos(\x)-3.2}, {sin(\x)});
      \draw [thick,dashed, blue,domain=-50:50] plot ({cos(\x)-2.5}, {sin(\x)});
         \draw [thick,dashed, blue,domain=-50:50] plot ({cos(\x)-4.5}, {sin(\x)});
  \fill ({cos(50)-2.5},{sin(50)}) circle (1.5pt); 
    \fill ({cos(50)-2.5},{-sin(50)}) circle (1.5pt); 
    \fill ({cos(130)},{sin(50)}) circle (1.5pt);
      \fill ({cos(130)},{-sin(50)}) circle (1.5pt);
    \fill ({cos(50)-4.5},{-sin(50)}) circle (1.5pt);
    \fill ({cos(50)-4.5},{sin(50)}) circle (1.5pt);
    \fill[red](-3.5,0) circle (2pt);
       \draw[red](-3.3,0) node[below]{$\xi_b$};
   \fill (-2.5,0) circle (1.5pt) node[below]{0};
      \fill (-1,0) circle (1.5pt);
         \fill (-1.5,0) circle (1.5pt);
          \draw(-6, 0) node[below]{$-\infty$};
     \draw(1, 0) node[below]{$\infty$};
\end{tikzpicture}
\end{center}
It is clear to see that $\xi_b\in \Xi_b\cap \mathbb{R}$.
Since $\frac{1}{4k_1}>\beta$, we have  $\#(\Xi_{b}\cap \mathbb{R})\leq 1$, 
thus $\Xi_b\cap \mathbb{R}=\{\xi_b\}\subseteq \Sigma$. So $(\sigma_1\cup \sigma_2)\cap \mathbb{R}=\emptyset$. 
Note that all inner intersection points of $\sigma(L_b^3)$ are real. 
Therefore, the rough graph of $\sigma(L_b^3)$ with $b\in (\beta-\delta,\beta)$ is the following:
\begin{center}
\begin{tikzpicture}
\draw[thick] (-2.5,0)--(0,0);
      \draw [thick,domain=-50:50] plot ({cos(\x)-2}, {sin(\x)});
         \draw [thick,domain=200:270] plot ({cos(\x)+1.5}, {sin(\x)+1.25});
            \draw [thick,domain=90:160] plot ({cos(\x)+1.5}, {sin(\x)-1.25});
  \fill ({cos(50)-2},{sin(50)}) circle (1.5pt);
    \fill ({cos(50)-2},{-sin(50)}) circle (1.5pt);
    \fill ({cos(200)+1.5},{sin(200)+1.25}) circle (1.5pt);
     \fill ({cos(270)+1.5},{sin(270)+1.25}) circle (1.5pt);
         \fill ({cos(90)+1.5},{sin(90)-1.25}) circle (1.5pt);
     \fill ({cos(160)+1.5},{sin(160)-1.25}) circle (1.5pt);
   \fill (0,0) circle (1.5pt) node[below]{0};
         \fill (-1,0) circle (1.5pt);
      \draw(-0.75, 0) node[below]{$\l_-$};
      \draw(-2.5, 0) node[below]{$-\infty$};
\end{tikzpicture}
\end{center}

The proof is complete.

\section*{Acknowledgments}
The research of
the author is supported by BIMSA Start-up Research Fund.

\end{proof}

\end{document}